\pdfoutput=1
\RequirePackage{ifpdf}
\ifpdf % We are running pdfTeX in pdf mode
\documentclass[pdftex]{sigma}
\else
\documentclass{sigma}
\fi

\numberwithin{equation}{section}

\newtheorem{Theorem}{Theorem}[section]
\newtheorem{Corollary}[Theorem]{Corollary}
\newtheorem{Lemma}[Theorem]{Lemma}
\newtheorem{Proposition}[Theorem]{Proposition}
 { \theoremstyle{definition}
\newtheorem{Definition}[Theorem]{Definition}
\newtheorem{Example}[Theorem]{Example}
\newtheorem{Remark}[Theorem]{Remark} }

\begin{document}

%\allowdisplaybreaks

\newcommand{\arXivNumber}{1603.09569}

\renewcommand{\PaperNumber}{086}

\FirstPageHeading

\ShortArticleName{On Jacobi Inversion Formulae for Telescopic Curves}

\ArticleName{On Jacobi Inversion Formulae for Telescopic Curves}

\Author{Takanori AYANO}

\AuthorNameForHeading{T.~Ayano}

\Address{Osaka City University, Advanced Mathematical Institute,\\ 3-3-138 Sugimoto, Sumiyoshi-ku, Osaka, 558-8585, Japan}
\Email{\href{mailto:tayano7150@gmail.com}{tayano7150@gmail.com}}

\ArticleDates{Received May 06, 2016, in f\/inal form August 23, 2016; Published online August 27, 2016}

\Abstract{For a hyperelliptic curve of genus $g$, it is well known that the symmetric products of $g$ points on the curve are expressed in terms of their Abel--Jacobi image by the hyper\-elliptic sigma function (Jacobi inversion formulae). Matsutani and Previato gave a natural generalization of the formulae to the more general algebraic curves def\/ined by $y^r=f(x)$, which are special cases of $(n,s)$ curves, and derived new vanishing properties of the sigma function of the curves $y^r=f(x)$. In this paper we extend the formulae to the telescopic curves proposed by Miura and derive new vanishing properties of the sigma function of telescopic curves. The telescopic curves contain the $(n,s)$ curves as special cases.}

\Keywords{sigma function; inversion of algebraic integrals; vanishing of sigma function; Riemann surface; telescopic curve}

\Classification{14H42; 14H50; 14H55}

\section{Introduction}

The theory of the elliptic function was the one of the main subjects of the research of mathematics in the nineteenth century. Now the beautiful theory of the elliptic function is constructed and is applied to many f\/ields such as mathematical physics, integrable system, number theory, engineering, and cryptography. In integrable system, it is well known that the elliptic function gives an exact solution of some nonlinear dif\/ferential equations. In cryptography, the cryptosystem using the elliptic curves is used widely. Recently, with the scientif\/ic development, we have to analyze many complicated nonlinear phenomena and it is necessary to give exact solutions of many nonlinear dif\/ferential equations in order to analyze the phenomena precisely. In cryptography, it is necessary to make a wider class of algebraic curves available to the cryptosystem for assuring the safety of cryptosystem. Therefore it is very important to construct the basic theory of the Abelian function, which is a generalization of the elliptic function to several variables. The sigma function plays an important role in the theory of the Abelian function.

The multivariate sigma function is introduced by F.~Klein \cite{Kl1,Kl2} for hyperelliptic curves as a generalization of the Weierstrass's elliptic sigma function. Recently, the hyperelliptic sigma function is generalized to the more general plane algebraic curves called $(n,s)$ curves \cite{BEL1,BEL2,BEL3,EEL,N1}. The sigma function is obtained by modifying Riemann's theta function so as to be modular invariant, i.e., it does not depend on the choice of a canonical homology basis. Further the sigma function has some remarkable algebraic properties that it is directly related with the def\/ining equations of an algebraic curve. From these algebraic properties, the sigma function is expected to have many applications in mathematical physics etc.~\cite{BEL3}. Further the sigma function is useful to describe a solution of the inversion problem of algebraic integrals. The Jacobi inversion problem for hyperelliptic curves is described as follows.

Let $X$ be a hyperelliptic curve of genus $g$ def\/ined by $y^2=f(x)$,
\begin{gather*}
f(x)=x^{2g+1}+\lambda_{2g}x^{2g}+\cdots+\lambda_1x+\lambda_0,\qquad \lambda_i\in\mathbb{C}.
\end{gather*}
Let $du_i=-\frac{x^{g-i}}{2y}dx$, $1\le i\le g$, be the holomorphic one forms on $X$ and $du={}^t(du_1,\dots,du_g)$. For $1\le k\le g$, $P_1,\dots,P_k\in X\backslash\infty$, and $u^{[k]}=\sum\limits_{i=1}^k\int_{\infty}^{P_i}du$, one wants to express the coordinates of~$P_i$ in terms of~$u^{[k]}$.

For $k=g$ and $P_i=(x_i,y_i)\in X$, we def\/ine the symmetric polynomial $e_i$ by
\begin{gather*}
e_i=\sum_{1\le\ell_1<\cdots<\ell_i\le g}x_{\ell_1}\cdots x_{\ell_i}.
\end{gather*}
Let $\sigma(u)$ be the sigma function of $X$ and $S^{g}(X)$ the $g$-th symmetric products of $X$. Then the following theorem is well-known \cite{Baker}.

\begin{theorem*}[Jacobi inversion formulae] If $\sum\limits_{i=1}^gP_i\in S^g(X\backslash\infty)$ is a general divisor, then we have
\begin{gather*}
\wp_{1,i}\big(u^{[g]}\big)=(-1)^{i-1}e_i,\qquad 1\le i\le g,
\end{gather*}
where $\wp_{i,j}(u)=-\frac{\partial^2}{\partial u_i\partial u_j}\log\sigma(u)$.
\end{theorem*}

The inversion of algebraic integrals is deeply related to the problems of mathematical physics (cf.~\cite{E2, E1}).

Matsutani and Previato \cite{Matsutani} gave a natural generalization of the above formulae for any $1\le k\le g$ and the more general plane algebraic curves def\/ined by
\begin{gather}
y^r=x^s+\lambda_{s-1}x^{s-1}+\cdots+\lambda_0,\label{yr}
\end{gather}
where $r$ and $s$ are relatively prime positive integers and $\lambda_i\in\mathbb{C}$. These curves are special cases of the $(n,s)$ curves. Furthermore, in~\cite{Matsutani}, new vanishing properties of the sigma function of the curves def\/ined by (\ref{yr}) are derived by using the extended Jacobi inversion formulae.

On the other hand, in \cite{Miu}, Miura introduced a certain canonical form, Miura canonical form, for def\/ining equations of any non-singular algebraic curve. A telescopic curve \cite{Miu} is a~special curve for which Miura canonical form is easy to determine. Let $m\geq 2$ and $(a_1,\dots ,a_m)$ a~sequence of relatively prime positive integers satisfying certain condition. Then the telescopic curve associated with $(a_1,\dots ,a_m)$ or the $(a_1,\dots ,a_m)$ curve is the algebraic curve def\/ined by certain $m-1$ equations in ${\mathbb C}^m$. For $m=2$, the telescopic curves are equal to the $(n,s)$ curves.

In this paper we extend the formulae obtained in \cite{Matsutani} to the telescopic curves (Theorems~\ref{main2} and~\ref{main3}). More specif\/ically, for the telescopic curves, we give formulae which express the $\wp$-function and the ratio of the derivative of the sigma function by the ratio of the determinants of certain matrices consisting of the algebraic functions. Under a certain condition, a coordinate of one point on the telescopic curves can be expressed in terms of its Abel--Jacobi image by the derivatives of the sigma function (Corollary~\ref{999}). Furthermore we derive new vanishing properties of the sigma function of the telescopic curves as a corollary of the formulae (Corollaries~\ref{9} and~\ref{55}). Finally we comment that the Jacobi inversion formulae are derived for $(3,4,5)$ curves in~\cite{Matsutani3} and $(3,7,8)$, $(6,13,14,15,16)$ curves in~\cite{Matsutani4}, which are not telescopic.

The present paper is organized as follows. In Section~\ref{section2}, the def\/inition of the telescopic curves is given. In Section~\ref{section3}, the fundamental dif\/ferential of second kind for the telescopic curves is reviewed and a coef\/f\/icient of the second kind dif\/ferentials is determined explicitly. In Section~\ref{section4}, the def\/inition of the sigma function of telescopic curves and the expression of the fundamental dif\/ferential of second kind by the sigma function are given. In Section~\ref{section5}, Frobenius--Stickelberger matrix is def\/ined. In Section~\ref{section6}, Riemann's singularity theorem is reviewed. In Section~\ref{section7}, a~generalization of Jacobi inversion formulae to telescopic curves is given. In Section~\ref{section8}, as an example, the formulae for the $(4,6,5)$ curves are given. In Section~\ref{section9}, some new vanishing properties of the sigma function of telescopic curves are given. In Section~\ref{section10}, as an example, the vanishing properties of the sigma function of the $(4,6,5)$ curves are given.

\section{Telescopic curves}\label{section2}

In this section we brief\/ly review the def\/inition of telescopic curves following \cite{Aya1, Miu}.

For $m\geq 2$, let $(a_1,\dots ,a_m)$ be a sequence of positive integers such that $\operatorname{gcd}(a_1,\dots ,a_m)=1$, $a_i\ge2$ for any $i$, and
\begin{gather*}
\frac{a_i}{d_i}\in \frac{a_1}{d_{i-1}}{\mathbb Z}_{\geq 0}+ \cdots+\frac{a_{i-1}}{d_{i-1}}{\mathbb Z}_{\geq 0},
\qquad 2\leq i\leq m,
\end{gather*}
where $d_i=\operatorname{gcd}(a_1,\dots ,a_i)$.

Let
\begin{gather*}
B(A_m)=\left\{(\ell_1,\dots ,\ell_m)\in {\mathbb Z}_{\geq 0}^m \,\big|\, 0\leq \ell_i\leq \frac{d_{i-1}}{d_i}-1 \ \text{for} \ 2\leq i\leq m\right\}.
\end{gather*}

\begin{Lemma}[\cite{Aya1, Miu}]\label{lem-2-1} For any $a\in a_1\mathbb{Z}_{\ge0}+\cdots+a_m\mathbb{Z}_{\ge0}$, there exists a unique element $(k_1,\dots,k_m)$ of $B(A_m)$ such that
\begin{gather*}
\sum_{i=1}^ma_ik_i=a.
\end{gather*}
\end{Lemma}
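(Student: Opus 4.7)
The plan is to proceed by induction on $m$, exploiting the fact that the truncated sequence $(a_1,\dots,a_{m-1})$ inherits the telescopic condition, together with the identity $\operatorname{gcd}(a_m,d_{m-1})=d_m$ (which is just associativity of $\operatorname{gcd}$). The base case $m=1$ is immediate, since $B(A_1)=\mathbb{Z}_{\geq 0}$ and $k_1=a/a_1$ is forced.

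For the inductive step I would treat existence and uniqueness separately. For \emph{existence}, starting from any representation $a=\sum_{i=1}^{m}a_i\ell_i$ with $\ell_i\geq 0$, I divide $\ell_m$ by $d_{m-1}/d_m$ to write $\ell_m=q(d_{m-1}/d_m)+k_m$ with $0\leq k_m\leq d_{m-1}/d_m-1$. Multiplying the telescopic relation for index $m$ through by $d_{m-1}$ furnishes non-negative integers $c_1,\dots,c_{m-1}$ with $a_m(d_{m-1}/d_m)=\sum_{i=1}^{m-1}c_i a_i$, so that
\begin{gather*}
a-a_m k_m=\sum_{i=1}^{m-1}(\ell_i+qc_i)a_i\in a_1\mathbb{Z}_{\geq 0}+\cdots+a_{m-1}\mathbb{Z}_{\geq 0}.
\end{gather*}
The induction hypothesis applied to $a-a_m k_m$ then produces $(k_1,\dots,k_{m-1})\in B(A_{m-1})$, and $(k_1,\dots,k_m)$ is the desired element of $B(A_m)$.

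For \emph{uniqueness}, if $(k_1,\dots,k_m)$ and $(k_1',\dots,k_m')$ in $B(A_m)$ both represent $a$, then $a_m(k_m-k_m')=\sum_{i=1}^{m-1}a_i(k_i'-k_i)$ is divisible by $d_{m-1}$. Since $\operatorname{gcd}(a_m,d_{m-1})=d_m$, this forces $(d_{m-1}/d_m)\mid(k_m-k_m')$, and the range constraint $0\leq k_m,k_m'\leq d_{m-1}/d_m-1$ then gives $k_m=k_m'$. Applying the induction hypothesis to the resulting identity $\sum_{i=1}^{m-1}a_i k_i=\sum_{i=1}^{m-1}a_i k_i'$ finishes the proof.

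The only delicate point — and the real content of the telescopic hypothesis — is the \emph{non-negativity} of the coefficients $c_i$ produced in the existence step. Without non-negativity, $a-a_m k_m$ need not lie in the numerical semigroup generated by $a_1,\dots,a_{m-1}$, and the inductive descent collapses. Everything else is routine bookkeeping.
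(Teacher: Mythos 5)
Your argument is correct, and it is essentially the standard proof: the paper itself does not prove Lemma \ref{lem-2-1} but cites it to \cite{Aya1, Miu}, where the same induction on $m$ (split off $k_m$ by division with remainder modulo $d_{m-1}/d_m$, use the telescopic relation $a_m(d_{m-1}/d_m)=\sum_{i<m}c_ia_i$ with $c_i\ge 0$ for descent, and use $\gcd(a_m,d_{m-1})=d_m$ for uniqueness of $k_m$) is the underlying argument. One small presentational point: to run the induction you must state the lemma for sequences satisfying only the telescopic condition, dropping the normalization $\gcd(a_1,\dots,a_m)=1$ (and $a_i\ge 2$), since the truncated sequence $(a_1,\dots,a_{m-1})$ has $\gcd$ equal to $d_{m-1}$, which may exceed $1$; your base case $k_1=a/a_1$ shows you are implicitly doing this, but it deserves to be said explicitly. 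Your closing remark correctly isolates the non-negativity of the $c_i$ as the place where the telescopic hypothesis is genuinely used.
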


By this lemma, for any $2\leq i\leq m$, there exists a unique sequence $(\ell_{i,1},\dots ,\ell_{i,m})\in B(A_m)$ satisfying
\begin{gather*}
\sum_{j=1}^m a_j \ell_{i,j}= a_i\frac{d_{i-1}}{d_i}.
\end{gather*}

\begin{Lemma}[\cite{Aya2}]\label{defining}
For any $2\le i\le m$, we have $\ell_{i,j}=0$ for $j\geq i$.
\end{Lemma}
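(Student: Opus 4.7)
The plan is to exhibit an element of $B(A_m)$ whose weighted sum equals $a_i d_{i-1}/d_i$ and whose entries in positions $j\ge i$ all vanish, and then invoke the uniqueness statement of Lemma~\ref{lem-2-1} to identify it with $(\ell_{i,1},\dots,\ell_{i,m})$.

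First I would use the telescopic hypothesis: multiplying the inclusion $\frac{a_i}{d_i}\in\sum_{j<i}\frac{a_j}{d_{i-1}}\mathbb{Z}_{\ge0}$ by $d_{i-1}$ yields nonnegative integers $c_1,\dots,c_{i-1}$ satisfying $a_i d_{i-1}/d_i = \sum_{j=1}^{i-1}c_j a_j$. The tuple $\mathbf{c}=(c_1,\dots,c_{i-1},0,\dots,0)\in\mathbb{Z}_{\ge0}^m$ is then a candidate representation, but its first $i-1$ entries need not lie below the bounds $d_{j-1}/d_j$ required by $B(A_m)$. I would therefore introduce a reduction procedure: given $\mathbf{k}\in\mathbb{Z}_{\ge0}^m$, if some index $2\le j\le m$ has $k_j\ge d_{j-1}/d_j$, pick a representation $a_j d_{j-1}/d_j=\sum_{l<j}b_{j,l}a_l$ with $b_{j,l}\ge0$ (available from the telescopic condition at index $j$), and replace $\mathbf{k}$ by the tuple $\mathbf{k}'$ with $k'_j=k_j-d_{j-1}/d_j$, $k'_l=k_l+b_{j,l}$ for $l<j$, and $k'_l=k_l$ for $l>j$. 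This step preserves the weighted sum $\sum_j k_j a_j$ and alters only positions $\le j$.

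The crucial observation is that no step of this procedure ever operates at an index $j\ge i$ when it is initiated from $\mathbf{c}$: such entries start at $0$, and subsequent reductions (necessarily at indices $<i$) leave positions $\ge i$ untouched. Hence every intermediate tuple retains zeros in positions $j\ge i$, and so does the output. For termination, I would order tuples lexicographically by $(k_m,k_{m-1},\dots,k_1)$: a reduction at index $j$ leaves coordinates $k_m,\dots,k_{j+1}$ fixed and strictly decreases $k_j$, so it strictly decreases the lex order; since $\sum_j k_j a_j$ is constant, the tuples lie in a finite set and the procedure terminates at some $(\ell_1,\dots,\ell_m)\in B(A_m)$ with $\ell_j=0$ for $j\ge i$. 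Uniqueness from Lemma~\ref{lem-2-1} then forces $\ell_{i,j}=\ell_j$, completing the proof.

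The main obstacle I anticipate is a clean bookkeeping of the reduction procedure --- specifically, making sure that the reduction both terminates and never introduces a nonzero entry in a high position. The lex-order argument coupled with the one-directional nature of the reduction (index $j$ only feeds indices $<j$) addresses both concerns simultaneously, so once that structure is articulated the rest is routine.
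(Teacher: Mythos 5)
Your argument is correct. Note that the paper itself offers no proof of this lemma, deferring to the cited reference [Aya2], so there is nothing internal to match against; what you give is a legitimate self-contained derivation. Your route is essentially a support-tracking version of the existence half of Lemma~\ref{lem-2-1}: starting from the representation $a_i d_{i-1}/d_i=\sum_{j<i}c_j a_j$ supplied by the telescopic condition, your rewriting step (trade $d_{j-1}/d_j$ units of $a_j$ for generators of index $<j$) preserves the weighted sum, never creates a nonzero entry in positions $\ge i$, and terminates by your reverse-lexicographic argument (a reduction at $j$ fixes $k_m,\dots,k_{j+1}$ and strictly decreases $k_j$, and the set of nonnegative tuples with fixed weighted sum is finite); uniqueness in Lemma~\ref{lem-2-1} then identifies the output with $(\ell_{i,1},\dots,\ell_{i,m})$. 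All the individual checks go through: zero entries never trigger a reduction since $d_{j-1}/d_j\ge 1$, and reductions at $j<i$ only feed positions $<j$. A shorter alternative, closer in spirit to the usual treatment of telescopic semigroups, works directly with the unique tuple: let $j_0$ be the largest index with $\ell_{i,j_0}\neq 0$ and suppose $j_0\ge i$; reducing $\sum_j a_j\ell_{i,j}=a_i d_{i-1}/d_i$ modulo $d_{j_0-1}$ kills all terms of index $<j_0$ and the right-hand side, so $d_{j_0-1}\mid a_{j_0}\ell_{i,j_0}$, whence $(d_{j_0-1}/d_{j_0})\mid\ell_{i,j_0}$ because $\gcd(a_{j_0}/d_{j_0},d_{j_0-1}/d_{j_0})=1$, contradicting $0\le\ell_{i,j_0}\le d_{j_0-1}/d_{j_0}-1$. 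That divisibility argument avoids setting up the rewriting procedure, but your construction has the side benefit of showing more generally that any element of $a_1\mathbb{Z}_{\ge0}+\cdots+a_{i-1}\mathbb{Z}_{\ge0}$ has its $B(A_m)$-representative supported in the first $i-1$ positions.
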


Consider $m-1$ polynomials in $m$ variables $x_1,\dots ,x_m$ given by
\begin{gather}
F_i(x)=x_i^{d_{i-1}/d_i}-\prod_{j=1}^{i-1} x_j^{\ell_{i,j}}- \sum \lambda^{(i)}_{j_1,\dots ,j_m}x_1^{j_1}\cdots x_m^{j_m},
\qquad 2\leq i\leq m,\label{eq-2-5}
\end{gather}
where $\lambda^{(i)}_{j_1,\dots ,j_m}\in\mathbb{C}$ and the sum of the right-hand side is over all $(j_1,\dots ,j_m)\in B(A_m)$ such that
\begin{gather*}
\sum_{k=1}^m a_kj_k<a_i\frac{d_{i-1}}{d_i}.
\end{gather*}

Let $X^{\rm{af\/f}}$ be the common zeros of $F_2$,\dots ,$F_ m$:
\begin{gather*}
X^{\rm{af\/f}}=\big\{(x_1,\dots ,x_m)\in\mathbb{C}^m\,|\,F_i(x_1,\dots ,x_m)=0,\, 2\leq i\leq m\big\}.
\end{gather*}
In \cite{Aya1, Miu}, $X^{\rm{af\/f}}$ is proved to be an af\/f\/ine algebraic curve. We assume that $X^{\rm{af\/f}}$ is nonsingular. Let $X$ be the compact Riemann surface corresponding to $X^{\rm{af\/f}}$. Then $X$ is obtained from $X^{\rm{af\/f}}$ by adding one point, say $\infty$ \cite{Aya1, Miu}. The genus of $X$ is given by \cite{Aya1, Miu}
\begin{gather}
g=\frac{1}{2}\left\{ 1-a_1+\sum_{i=2}^m\left(\frac{d_{i-1}}{d_i}-1\right)a_i \right\}.\label{eq-2-6}
\end{gather}
We call $X$ the telescopic curve associated with $(a_1,\dots ,a_m)$. The numbers $a_1,\dots ,a_m$ are a~ge\-ne\-rator of the semigroup of non-gaps at~$\infty$.

\begin{Example}\quad
\begin{enumerate}\itemsep=0pt
\item[(i)] The telescopic curve associated with a pair of relatively prime integers $(n,s)$ is the $(n,s)$ curve introduced in~\cite{BEL2}.

\item[(ii)] For $A_3=(4,6,5)$, polynomials $F_i$ are given by
\begin{gather*}
F_2(x)=x_2^2-x_1^3-\lambda^{(2)}_{0,1,1}x_2x_3-\lambda^{(2)}_{1,1,0}x_1x_2-\lambda^{(2)}_{1,0,1}x_1x_3-\lambda^{(2)}_{2,0,0}x_1^2
-\lambda^{(2)}_{0,1,0}x_2 \\
\hphantom{F_2(x)=}{}-\lambda^{(2)}_{0,0,1}x_3-\lambda^{(2)}_{1,0,0}x_1-\lambda^{(2)}_{0,0,0},
\\
F_3(x)=x_3^2-x_1x_2-\lambda^{(3)}_{1,0,1}x_1x_3-\lambda^{(3)}_{2,0,0}x_1^2-\lambda^{(3)}_{0,1,0}x_2-\lambda^{(3)}_{0,0,1}x_3
-\lambda^{(3)}_{1,0,0}x_1-\lambda^{(3)}_{0,0,0}.
\end{gather*}
\end{enumerate}
\end{Example}

For a meromorphic function $f$ on $X$, we denote by $\operatorname{ord}_{\infty}(f)$ the order of a pole at $\infty$. Then we have $\operatorname{ord}_{\infty}(x_i)=a_i$. We enumerate the monomials $x_1^{\alpha_1}\cdots x_m^{\alpha_m}$, $(\alpha_1,\dots,\alpha_m)\in B(A_m)$, according as the order of a pole at $\infty$ and denote them by $\varphi_i$, $i\geq 1$. In particular we have $\varphi_1=1$. The set $\{\varphi_i\}_{i=1}^{\infty}$ is a basis of meromorphic functions on $X$ with a pole only at $\infty$.

Let $G$ be the $(m-1)\times m$ matrix def\/ined by
\begin{gather*}
G=\left(\frac{\partial F_i}{\partial x_j}\right)_{2\leq i\leq m, \, 1\le j\le m}
\end{gather*}
and $G_k$ the $(m-1)\times (m-1)$ matrix obtained by deleting the $k$-th column from $G$. Then a~basis of holomorphic one forms is given by
\begin{gather*}
du_i=-\frac{\varphi_{g+1-i}}{\det G_1}dx_1,\qquad 1\le i\le g.
\end{gather*}

Let $(w_1,\dots ,w_g)$ be the gap sequence at $\infty$:
\begin{gather*}
\{w_i\,|\,1\leq i\leq g\}={\mathbb Z}_{\geq0} \backslash\left\{\sum_{i=1}^m a_i{\mathbb Z}_{\geq0}\right\}, \qquad w_1<\cdots<w_g.
\end{gather*}
In particular $w_1=1$, since $g\geq 1$. The following lemma is proved in \cite{Aya1}.

\begin{Lemma}\label{lem-2-2}
We have $w_{g}=2g-1$. In particular, $du_g$ has a zero of order $2g-2$ at $\infty$.
\end{Lemma}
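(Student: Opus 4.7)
The plan is to identify $2g-1$ with the Frobenius number of the numerical semigroup $a_1\mathbb{Z}_{\geq 0}+\dots+a_m\mathbb{Z}_{\geq 0}$ and to certify it is a gap by invoking the uniqueness statement of Lemma~\ref{lem-2-1}. From the genus formula~(\ref{eq-2-6}) one rewrites
\[
2g-1 = -a_1 + \sum_{i=2}^m \left(\frac{d_{i-1}}{d_i}-1\right) a_i,
\]
so the task reduces to showing that this integer does not belong to $a_1\mathbb{Z}_{\geq 0}+\dots+a_m\mathbb{Z}_{\geq 0}$.

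Suppose for contradiction that $2g-1 = \sum_{i=1}^{m} k_i a_i$ with all $k_i \geq 0$. By Lemma~\ref{lem-2-1} we may assume $(k_1,\dots,k_m) \in B(A_m)$. Adding $a_1$ to both sides of this equality and comparing with the identity above, one obtains
\[
\sum_{i=2}^m \left(\frac{d_{i-1}}{d_i}-1\right) a_i = (k_1+1)a_1 + \sum_{i=2}^m k_i a_i.
\]
Both tuples $(0,\,d_1/d_2-1,\dots,d_{m-1}/d_m-1)$ and $(k_1+1,\,k_2,\dots,k_m)$ lie in $B(A_m)$ — the defining inequalities of $B(A_m)$ only constrain the last $m-1$ coordinates, and these are already satisfied — and both represent the same integer $2g-1+a_1$. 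Uniqueness in Lemma~\ref{lem-2-1} forces $k_1+1=0$, contradicting $k_1\geq 0$. Hence $2g-1$ is a gap. Since the Weierstrass gap theorem (Riemann--Roch) confines all $g$ Weierstrass gaps to $\{1,\dots,2g-1\}$, we conclude $w_g = 2g-1$.

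For the ``in particular'' part, I would argue as follows. The numerators $\varphi_g,\varphi_{g-1},\dots,\varphi_1$ of $du_1,\dots,du_g$ are, by the very enumeration of the $\varphi_i$, of strictly decreasing pole order at $\infty$; since they are divided by the common factor $\det G_1$ and multiplied by the common differential $dx_1$, the orders of vanishing $\operatorname{ord}_{\infty}(du_i)$ are strictly increasing in $i$, hence pairwise distinct. On the other hand, the Weierstrass gap theorem says that the possible orders of vanishing at $\infty$ of holomorphic one-forms form exactly the $g$-element set $\{w_i-1 \mid 1\leq i\leq g\}$. A set of $g$ distinct values inside this $g$-element set must be the whole set, so $\operatorname{ord}_{\infty}(du_i) = w_i-1$; taking $i=g$ gives the desired vanishing order $2g-2$.

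The main subtlety is the uniqueness argument: one must verify that both tuples appearing in the comparison really do belong to $B(A_m)$, which relies crucially on the fact that the first coordinate of an element of $B(A_m)$ carries no upper bound, so incrementing $k_1$ by one does not leave the set. Everything else is routine bookkeeping with the genus formula and the Weierstrass gap theorem.
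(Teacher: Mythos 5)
Your proof is correct, and it is worth noting that the paper itself offers no argument for this lemma at all: it simply cites~\cite{Aya1}, so you have supplied a self-contained proof where the paper defers to a reference. Your key step is a clean one: rewriting $2g-1+a_1=\sum_{i=2}^m(d_{i-1}/d_i-1)a_i$ via the genus formula~(\ref{eq-2-6}) and then playing the uniqueness part of Lemma~\ref{lem-2-1} against the hypothetical representation $(k_1+1,k_2,\dots,k_m)$; the observation that the first coordinate in $B(A_m)$ is unconstrained, so that incrementing $k_1$ keeps you inside $B(A_m)$, is exactly the point that makes the comparison legitimate, and the contradiction $k_1+1=0$ is immediate. (In effect you are identifying the maximal element of the Ap\'ery set of $a_1$, i.e., the maximal $B(A_m)$-monomial with first coordinate zero, which is how this fact is usually established for telescopic semigroups; the standard alternative route is via the symmetry of the semigroup, which the paper only records through the symmetry of the Young diagram.) The deduction $w_g=2g-1$ from the Frobenius bound $w_g\le 2g-1$ (Riemann--Roch/Weierstrass gaps) is fine, granting, as the paper does, that $a_1,\dots,a_m$ generate the full semigroup of non-gaps at $\infty$ and that the gap count is $g$. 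For the ``in particular'' part, your counting argument --- the $g$ forms $du_i$ have pairwise distinct, strictly increasing vanishing orders at $\infty$ because the pole orders of the monomials $\varphi_{g+1-i}$ are strictly decreasing (distinctness again coming from the uniqueness in Lemma~\ref{lem-2-1}), and these orders must therefore exhaust the $g$-element set $\{w_i-1\}$ --- is valid and in fact reproves, for the top index, what the paper later quotes as Proposition~\ref{c} from~\cite{Aya2}, namely $du_i=z^{w_i-1}(1+O(z))dz$; so your second half could alternatively be replaced by an appeal to that proposition, at the cost of a forward reference.
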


From Lemma \ref{lem-2-2}, we f\/ind that the vector of Riemann constants for a telescopic curve with a base point $\infty$ is a half-period.

\begin{Lemma}[\cite{Aya2}]\label{b}
It is possible to take a local parameter $z$ around $\infty$ such that
\begin{gather}
x_1=\frac{1}{z^{a_1}}, \qquad x_i=\frac{1}{z^{a_i}}(1+O(z)),\qquad 2\leq i\leq m. \label{eq-3-1}
\end{gather}
\end{Lemma}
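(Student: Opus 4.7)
The plan is to construct $z$ in two stages: first, take a local parameter that normalizes $x_1$ exactly; then, inductively adjust by roots of unity to normalize the leading coefficients of $x_2,\dots,x_m$ without disturbing the ones already normalized.

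For the first step, I would set $z_0$ to be any choice of $a_1$-th root of $1/x_1$. Because $\operatorname{ord}_\infty(x_1)=a_1$ and $x_1$ has no other poles on $X$, the function $1/x_1$ has a zero of exact order $a_1$ at $\infty$, so $z_0$ has a simple zero at $\infty$ and gives a local parameter satisfying $x_1=1/z_0^{a_1}$ exactly. Expanding in $z_0$, write $x_i = c_i z_0^{-a_i}(1+O(z_0))$ with $c_i\in\mathbb{C}^\times$ for $2\le i\le m$.

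Next I would prove by induction on $i$ that $z_0$ may be replaced by $z=\xi^{-1}z_0$ for a suitable $d_{i-1}$-th root of unity $\xi$ so that, after the replacement, $x_j=z^{-a_j}(1+O(z))$ holds for all $1\le j\le i$. Such a replacement never disturbs the previously normalized expansions, because $d_{i-1}\mid a_j$ for every $j\le i-1$, so $\xi^{a_j}=1$ there. To find $\xi$, I would compare the most singular terms on both sides of the defining equation $F_i(x)=0$ in \eqref{eq-2-5}. By Lemma~\ref{defining}, the product $\prod_{j=1}^{i-1}x_j^{\ell_{i,j}}$ has pole order exactly $a_i d_{i-1}/d_i$, matching that of $x_i^{d_{i-1}/d_i}$, while all other monomials on the right of \eqref{eq-2-5} have strictly smaller pole order. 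The resulting coefficient identity, combined with the inductive hypothesis $c_j=1$ for $j<i$, yields
\[
c_i^{d_{i-1}/d_i}=\prod_{j=1}^{i-1}c_j^{\ell_{i,j}}=1,
\]
so $c_i$ is a $(d_{i-1}/d_i)$-th root of unity. The raising-to-$a_i$ homomorphism on the group of $d_{i-1}$-th roots of unity has image precisely the $d_{i-1}/\gcd(d_{i-1},a_i)=d_{i-1}/d_i$-th roots of unity, which contains $c_i$. Hence there exists $\xi$ with $\xi^{d_{i-1}}=1$ and $\xi^{a_i}=c_i$, and the reparametrization $z=\xi^{-1}z_0$ produces leading coefficient $1$ for $x_i$ in $z$.

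The main point to verify carefully will be that the algebraic constraint $c_i^{d_{i-1}/d_i}=1$ coming from the leading-term analysis of $F_i=0$ matches the image of the raising-to-$a_i$ map on the $d_{i-1}$-th roots of unity. These two subgroups coincide precisely because $d_i=\gcd(d_{i-1},a_i)$, which is exactly the arithmetic identity built into the telescopic condition imposed at the start of Section~\ref{section2}, so the compatibility required by the inductive step is automatic for telescopic sequences.
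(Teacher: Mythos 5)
Your proof is correct: the paper itself does not prove Lemma~\ref{b} (it is quoted from \cite{Aya2}), and your two-stage argument---normalizing $x_1$ exactly via an $a_1$-th root of $1/x_1$, then inductively absorbing each leading coefficient $c_i$ by a $d_{i-1}$-th root of unity, using the top-order terms of $F_i=0$ together with Lemma~\ref{defining} to get $c_i^{d_{i-1}/d_i}=1$ and the computation $\gcd(d_{i-1},a_i)=d_i$ to solve $\xi^{a_i}=c_i$---is essentially the argument of the cited source. One small remark: the identity $\gcd(d_{i-1},a_i)=d_i$ is just the definition of $d_i$ and holds for any sequence, so it is not itself ``the telescopic condition''; the telescopic structure enters instead through the form of the defining equations~\eqref{eq-2-5} and Lemma~\ref{defining}, which ensure the balancing monomial of maximal pole order involves only the already-normalized variables $x_1,\dots,x_{i-1}$.
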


\begin{Proposition}[\cite{Aya2}]\label{c}
For $1\leq i\leq g$, the expansion of $du_i$ at $\infty$ is of the form
\begin{gather*}
du_i=z^{w_i-1}(1+O(z))dz.
\end{gather*}
\end{Proposition}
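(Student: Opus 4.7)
The plan is to substitute the local expansions from Lemma~\ref{b} into
\[
du_i = -\frac{\varphi_{g+1-i}}{\det G_1}\,dx_1,
\]
extract the leading $z$-behavior of each of the three factors, and identify the resulting exponent with $w_i-1$. Two of the pieces are immediate: $dx_1 = -a_1 z^{-a_1-1}\,dz$ exactly, and any monomial $\varphi_j = x_1^{\alpha_1}\cdots x_m^{\alpha_m}$ with $\sum_k \alpha_k a_k$ equal to the $j$-th non-gap $n_j$ expands as $z^{-n_j}(1+O(z))$. The substantive content is the leading-order analysis of $\det G_1$.

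For $2\le i,j\le m$, the most polar contribution to the entry $\partial F_i/\partial x_j$ comes from differentiating the two ``principal'' monomials $x_i^{d_{i-1}/d_i}$ and $\prod_{k<i}x_k^{\ell_{i,k}}$; by Lemma~\ref{defining}, both depend only on $x_1,\dots,x_i$, so entries with $j>i$ have strictly smaller pole order than those with $j\le i$. In the permutation expansion of the $(m-1)\times(m-1)$ determinant $\det G_1$, any non-identity permutation of $\{2,\dots,m\}$ must move some index above the diagonal; hence only the identity permutation contributes at leading order. Combined with $x_i = z^{-a_i}(1+O(z))$, the telescoping identity $\prod_{i=2}^m (d_{i-1}/d_i) = d_1/d_m = a_1$, and the genus formula~(\ref{eq-2-6}), this yields
\[
\det G_1 = a_1\, z^{-(2g-1+a_1)}\bigl(1+O(z)\bigr).
\]
Multiplying the three pieces produces $du_i = z^{2g-2-n_{g+1-i}}(1+O(z))\,dz$; crucially, the $a_1$ from $dx_1$ cancels against the $a_1$ in the leading coefficient of $\det G_1$, so the leading coefficient is exactly $1$.

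Finally, to identify $2g-2-n_{g+1-i}$ with $w_i-1$, I would invoke that the telescopic semigroup is symmetric (its Frobenius number is $2g-1$, consistent with $w_g=2g-1$ from Lemma~\ref{lem-2-2}), giving the reflection $n_{g+1-i}=2g-1-w_i$. If one prefers to avoid appealing to symmetry, the same conclusion is forced by the Weierstrass gap theorem: the $g$ distinct orders $\{2g-2-n_{g+1-i}\}_{i=1}^g$ must coincide as a set with the possible orders of vanishing of holomorphic one-forms at $\infty$, namely $\{w_j-1\}_{j=1}^g$, and strict monotonicity in $i$ pins down the matching.

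I expect the main obstacle to be the leading-order analysis of $\det G_1$: using Lemma~\ref{defining} to suppress every non-identity permutation, and noticing the precise numerical cancellation $a_1 = \prod_{i=2}^m (d_{i-1}/d_i)$ against the $-a_1$ from $dx_1$, is what ensures the leading coefficient is a clean $1$ rather than some semigroup-dependent constant.
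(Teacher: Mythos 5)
Your proposal is correct in substance, and it is worth noting that the paper itself gives no proof of Proposition~\ref{c} (it is quoted from~\cite{Aya2}), so your direct computation is a legitimate self-contained route. Moreover, the only nontrivial ingredient you need, the asymptotics $\det G_1=a_1z^{-(2g-1+a_1)}(1+O(z))$, is exactly the $k=1$ case of Lemma~\ref{lemma5}, which the paper does prove (Appendix) by a weight-homogeneity argument: the top-weight part of $\det G_1$ is free of the coefficients $\lambda^{(i)}_{j_1,\dots,j_m}$, and its coefficient is the determinant of the constant matrix~$M$, computed by row reduction to be $a_1$. So you could shorten your argument by quoting Lemma~\ref{lemma5} together with Lemma~\ref{b} and the genus formula~(\ref{eq-2-6}); the final identification of $2g-2-\operatorname{ord}_\infty(\varphi_{g+1-i})$ with $w_i-1$, either through the symmetry of the semigroup (equivalent to $w_g=2g-1$, Lemma~\ref{lem-2-2}) or through the Weierstrass gap theorem plus monotonicity, is fine either way.

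One caveat on your own treatment of $\det G_1$: the sentence ``entries with $j>i$ have strictly smaller pole order than those with $j\le i$'' is not literally true. A below-diagonal entry $\partial F_i/\partial x_j$ with $j<i$ can have pole order $a_id_{i-1}/d_i-a_j$, which exceeds the diagonal order $a_i(d_{i-1}/d_i-1)$ whenever $a_j<a_i$, so the identity permutation does not dominate entry by entry or row by row. The argument is rescued by weight bookkeeping: every entry satisfies $\operatorname{ord}_\infty\big(\partial F_i/\partial x_{\pi(i)}\big)\le a_id_{i-1}/d_i-a_{\pi(i)}$, the sum of these bounds over $i$ is the same for every permutation $\pi$ of $\{2,\dots,m\}$, and the bound is strict for any above-diagonal entry (by Lemma~\ref{defining} and the strict weight inequality in~(\ref{eq-2-5})); since every $\pi\neq\mathrm{id}$ has at least one above-diagonal entry, the identity term, whose leading coefficient $\prod_{i=2}^m d_{i-1}/d_i=a_1$ is nonzero, has strictly larger pole order than all other terms. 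With that wording repaired, your proof goes through, including the clean leading coefficient~$1$ from the cancellation of $a_1$ against $dx_1=-a_1z^{-a_1-1}dz$.
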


For the telescopic curve $X$ associated with $A_m=(a_1,\dots,a_m)$, we def\/ine the partition by
\begin{gather*}
\mu(A_m)=(w_g,\dots,w_1)-(g-1,\dots,0).
\end{gather*}

\begin{Proposition}[\cite{BEL2,N1}]
The Young diagram of $\mu(A_m)$ is symmetric.
\end{Proposition}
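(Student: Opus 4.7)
The plan is to deduce the self-conjugacy of the partition $\mu(A_m)$ from the fact that the Weierstrass semigroup $H=a_1\mathbb{Z}_{\ge0}+\cdots+a_m\mathbb{Z}_{\ge0}$ of non-gaps at $\infty$ is a \emph{symmetric} numerical semigroup, and then to translate this arithmetic symmetry into a combinatorial symmetry of the associated Young diagram.

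First I would show that $H$ is symmetric, i.e.\ for every $n\in\{0,1,\ldots,2g-1\}$ exactly one of $n$ and $2g-1-n$ lies in $H$. Since $w_g$ is the Frobenius number (largest gap) of $H$, whenever $n\in H\cap[0,w_g]$ one has $w_g-n\notin H$, for otherwise $w_g=n+(w_g-n)$ would be a non-gap; this defines an injection $H\cap[0,w_g]\hookrightarrow\{w_1,\ldots,w_g\}$. By Lemma~\ref{lem-2-2} we have $w_g=2g-1$, so the two sets both have $g$ elements, the injection is a bijection, and this is the desired symmetry.

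Next I would convert symmetry of $H$ into self-conjugacy of $\mu(A_m)$. Writing the parts as $\mu_i=w_{g+1-i}-(g-i)$, the conjugate partition satisfies $\mu_j'=\#\{i:\mu_i\ge j\}$. Substituting $k=g+1-i$ rewrites this as $\#\{k:w_k-k+1\ge j\}$, and since $w_k-k+1$ is precisely the number of non-gaps in $[0,w_k-1]$, listing the non-gaps in $[0,2g-2]$ as $n_0<n_1<\cdots<n_{g-1}$ gives $\mu_j'=\#\{k:w_k>n_{j-1}\}$. Counting gaps in $[0,n_{j-1}]$ yields $\mu_j'=g+j-1-n_{j-1}$, and the symmetry of $H$ identifies $n_{j-1}=2g-1-w_{g+1-j}$. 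Substituting recovers $\mu_j'=w_{g+1-j}-(g-j)=\mu_j$, which is self-conjugacy.

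The main obstacle is the first step, which is the classical Sylvester--Frobenius criterion characterizing symmetric numerical semigroups by the equality $F=2g-1$ between Frobenius number and $2\cdot(\text{genus})-1$; once this is in hand via the injection-and-cardinality argument above, the translation to the Young diagram is routine combinatorial bookkeeping.
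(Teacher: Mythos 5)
Your proof is correct, and it is worth noting that the paper itself offers no argument for this Proposition at all --- it is simply cited from \cite{BEL2,N1}, so your write-up supplies a self-contained proof that the paper delegates to the literature. Your two steps both check out: the map $n\mapsto w_g-n$ does send $H\cap[0,w_g]$ injectively into the gap set (closure of $H$ under addition is exactly what is needed), and since $w_g=2g-1$ by Lemma~\ref{lem-2-2} both sets have $g$ elements, giving the symmetry of the semigroup; and the bookkeeping in the second step is accurate --- $\mu_i=w_{g+1-i}-(g-i)=w_k-k+1$ with $k=g+1-i$ counts non-gaps in $[0,w_k-1]$, so $\mu_j'=\#\{k: w_k>n_{j-1}\}=g+j-1-n_{j-1}$, and the order-reversing bijection $n\mapsto 2g-1-n$ between non-gaps in $[0,2g-2]$ and gaps gives $n_{j-1}=2g-1-w_{g+1-j}$, whence $\mu_j'=\mu_j$ for $1\le j\le g$ (the range $j>g$ being trivial since $\mu_1=g$). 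This is the standard route --- self-conjugacy of the partition attached to a Weierstrass gap sequence is equivalent to symmetry of the semigroup, which here is exactly the content of $w_g=2g-1$ --- and it is in substance the argument behind the cited references, so your proposal fills a genuine gap in the exposition rather than diverging from it.
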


\begin{Example}
For $(4,6,5)$ curves, we have $g=4$, $w_1=1$, $w_2=2$, $w_3=3$, $w_4=7$ and $\mu((4,6,5))=(4,1,1,1)$. For $(4,6,7)$ curves, we have $g=5$, $w_1=1$, $w_2=2$, $w_3=3$, $w_4=5$, $w_5=9$ and $\mu((4,6,7))=(5,2,1,1,1)$. Therefore the Young diagrams of $(4,6,5)$ curves and $(4,6,7)$ curves are as follows.

\smallskip

{\normalsize \hspace{14ex}$(4,6,5)$ curves\hspace{23ex}$(4,6,7)$ curves}

\begin{picture}(400,80)
\put(60,0){\line(1,0){20}}
\put(60,20){\line(1,0){20}}
\put(60,40){\line(1,0){20}}
\put(60,80){\line(1,0){80}}
\put(60,60){\line(1,0){80}}
\put(60,0){\line(0,1){80}}
\put(80,0){\line(0,1){80}}
\put(100,60){\line(0,1){20}}
\put(120,60){\line(0,1){20}}
\put(140,60){\line(0,1){20}}

\put(240,0){\line(1,0){15}}
\put(240,15){\line(1,0){15}}
\put(240,30){\line(1,0){15}}
\put(240,45){\line(1,0){30}}
\put(240,60){\line(1,0){75}}
\put(240,75){\line(1,0){75}}
\put(240,0){\line(0,1){75}}
\put(255,0){\line(0,1){75}}
\put(270,45){\line(0,1){30}}
\put(285,60){\line(0,1){15}}
\put(300,60){\line(0,1){15}}
\put(315,60){\line(0,1){15}}
\end{picture}
\end{Example}

\section{Fundamental dif\/ferential of second kind}\label{section3}

A fundamental dif\/ferential of second kind plays an important role in the theory of the sigma function. We recall its def\/inition.

\begin{Definition}
A two form $\omega(P,Q)$ on $X\times X$ is called a fundamental dif\/ferential of second kind if the following conditions are satisf\/ied:
\begin{enumerate}\itemsep=0pt
\item[(i)] $\omega(P,Q)=\omega(Q,P)$,

\item[(ii)] $\omega(P,Q)$ is holomorphic except $\{(R,R)\,|\,R\in X\}$ where it has a double pole,

\item[(iii)] for $R\in X$, take a local coordinate $t$ around $R$, then the expansion around $(R,R)$ is of the form
\begin{gather*}
\omega(P,Q)=\left(\frac{1}{(t_P-t_Q)^2}+\text{regular}\right)dt_Pdt_Q.
\end{gather*}
\end{enumerate}
\end{Definition}
A fundamental dif\/ferential of second kind exists but is not unique. Let $\omega_1(P,Q)$ be a fundamental dif\/ferential of second kind. Then a two form $\omega_2(P,Q)$ is a fundamental dif\/ferential of second kind if and only if there exists $\{c_{ij}\}_{i,j=1,\dots,g}\in\mathbb{C}$ such that $c_{ij}=c_{ji}$ and
\begin{gather*}
\omega_2(P,Q)=\omega_1(P,Q)+\sum_{i,j=1}^gc_{ij}dv_i(P)dv_j(Q),
\end{gather*}
where $\{dv_i\}_{i=1}^g$ is a basis of holomorphic one forms on $X$.

For a telescopic curve $X$, a fundamental dif\/ferential of second kind is algebraically constructed in~\cite{Aya1}. We recall its construction. Note that the construction inherits all steps of classical construction in~\cite{Baker} that was recently recapitulated and generalized in~\cite{EEL,N1} for the $(n,s)$ curves.

Let $X$ be a telescopic curve of genus $g$. We def\/ine the 2-form $\widehat{\omega}(P,Q)$ on $X\times X$ by
\begin{gather*}
\widehat{\omega}(P,Q)=d_Q\Omega(P,Q)+ \sum_{i=1}^g du_i(P)dr_i(Q),
\end{gather*}
where $P=(x_1,\dots ,x_m)$, $Q=(y_1,\dots ,y_m)$ are points on $X$,
\begin{gather*}
\Omega(P,Q)=\frac{\det H(P,Q)}{(x_1-y_1)\det G_1(P)}dx_1,
\end{gather*}
$H=(h_{ij})_{2\leq i,j\leq m}$ with
\begin{gather*}
h_{ij}=\frac{F_i(y_1,\dots ,y_{j-1},x_j,x_{j+1},\dots ,x_m)-F_i(y_1,\dots ,y_{j-1},y_j,x_{j+1},\dots ,x_m)}{x_j-y_j},
\end{gather*}
and $dr_i$ is a second kind dif\/ferential with a pole only at $\infty$. The set
\begin{gather*}
\left\{\frac{\varphi_i(P)}{\det G_1(P)}dx_1\right\}_{i=1}^{\infty}
\end{gather*}
is a basis of meromorphic one forms on $X$ with a pole only at $\infty$ \cite{Aya1, N1}. It is possible to take $\{dr_i\}_{i=1}^g$ such that $\widehat{\omega}(P,Q)=\widehat{\omega}(Q,P)$ \cite{Aya1, N1}. If we take $\{dr_i\}_{i=1}^g$ such that $\widehat{\omega}(P,Q)=\widehat{\omega}(Q,P)$, then $\widehat{\omega}(P,Q)$ becomes a fundamental dif\/ferential of second kind \cite{Aya1, N1}.

We assign degrees as
\begin{gather*}
\deg x_k=\deg y_k=a_k,\qquad \deg\lambda_{j_1,\dots,j_m}^{(i)}=a_id_{i-1}/d_i-\sum_{k=1}^ma_kj_k.
\end{gather*}

\begin{Lemma}\label{167}\quad
\begin{enumerate}\itemsep=0pt
\item[$(i)$] For $1\le k\le m$, $\det G_k(Q)$ is homogeneous of degree $\sum\limits_{i=2}^ma_id_{i-1}/d_i-\sum\limits_{i=1}^ma_i+a_k$ with respect to the coefficients $\big\{\lambda_{j_1,\dots,j_m}^{(i)}\big\}$ and the variables $y_1,\dots,y_m$.

\item[$(ii)$] $\det H(P,Q)$ is homogeneous of degree $\sum\limits_{i=2}^ma_i(d_{i-1}/d_i-1)$ with respect to the coefficients $\big\{\lambda_{j_1,\dots,j_m}^{(i)}\big\}$ and the variables $x_1,\dots,x_m,y_1,\dots,y_m$.
\end{enumerate}
\end{Lemma}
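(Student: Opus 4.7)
The plan is to first establish that each defining polynomial $F_i$ is homogeneous of degree $a_i d_{i-1}/d_i$ in the joint grading on variables and coefficients, and then to read off the degrees of the determinants $\det G_k$ and $\det H$ by expanding them and using multilinearity. The homogeneity of $F_i$ would be verified term by term in \eqref{eq-2-5}: the leading monomial $x_i^{d_{i-1}/d_i}$ has degree $a_i d_{i-1}/d_i$ by the assignment $\deg x_i = a_i$; the monomial $\prod_{j=1}^{i-1} x_j^{\ell_{i,j}}$ has degree $\sum_j a_j \ell_{i,j} = a_i d_{i-1}/d_i$ by the defining relation for the $(\ell_{i,j})$ (and Lemma \ref{defining} to restrict to $j\le i-1$); and the prescribed degree of $\lambda^{(i)}_{j_1,\dots,j_m}$ is chosen precisely so that every remaining monomial $\lambda^{(i)}_{j_1,\dots,j_m} x_1^{j_1}\cdots x_m^{j_m}$ has total degree $a_i d_{i-1}/d_i$ as well. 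Consequently $\partial F_i/\partial x_j$ is homogeneous of degree $a_i d_{i-1}/d_i - a_j$.

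For part (i), I would expand $\det G_k(Q)$ as a signed sum over bijections $\sigma\colon\{2,\dots,m\}\to\{1,\dots,m\}\setminus\{k\}$ of the products $\prod_{i=2}^{m}(\partial F_i/\partial x_{\sigma(i)})(Q)$. Every such product is then homogeneous of total degree
\[
\sum_{i=2}^{m}\left(a_i\frac{d_{i-1}}{d_i}-a_{\sigma(i)}\right) = \sum_{i=2}^{m} a_i\frac{d_{i-1}}{d_i} - \sum_{j=1}^{m} a_j + a_k,
\]
which is exactly the formula claimed.

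For part (ii) the key subsidiary step is to check that each entry $h_{ij}(P,Q)$ is itself homogeneous of degree $a_i d_{i-1}/d_i - a_j$. Since $F_i$ is homogeneous of degree $a_i d_{i-1}/d_i$ under the common grading $\deg x_k = \deg y_k = a_k$, each of the two evaluations appearing in the numerator of $h_{ij}$ is homogeneous of that same degree, so their difference is homogeneous of degree $a_i d_{i-1}/d_i$ as well; it is divisible by $x_j - y_j$ because the two evaluations coincide at $x_j = y_j$, and dividing by $x_j - y_j$ (degree $a_j$) gives $h_{ij}$ the stated degree. Expanding $\det H(P,Q)$ as a signed sum over permutations $\sigma$ of $\{2,\dots,m\}$, each term has total degree $\sum_{i=2}^{m}(a_i d_{i-1}/d_i - a_{\sigma(i)}) = \sum_{i=2}^{m} a_i(d_{i-1}/d_i - 1)$, as asserted. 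The only step that requires genuine attention is the homogeneity of the difference quotient $h_{ij}$; the rest is a routine degree count from the multilinearity of the determinant, so no serious obstacle is expected.
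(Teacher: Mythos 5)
Your proposal is correct and follows essentially the same route as the paper: the paper's proof simply asserts that each entry $\partial F_i/\partial y_j$ and each difference quotient $h_{ij}$ is homogeneous of degree $a_id_{i-1}/d_i-a_j$ and concludes by the determinant expansion, which is exactly your degree count. You merely supply the details the paper leaves implicit (the term-by-term homogeneity of $F_i$ via the relation $\sum_j a_j\ell_{i,j}=a_id_{i-1}/d_i$ and Lemma~\ref{defining}, and the divisibility argument for $h_{ij}$), so no discrepancy to report.
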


\begin{proof}
For $2\le i\le m$ and $1\le j\le m$, $\frac{\partial F_i(y)}{\partial y_j}$ is homogeneous of degree $a_id_{i-1}/d_i-a_j$ with respect to $\big\{\lambda_{j_1,\dots,j_m}^{(i)}\big\}$ and $y_1,\dots,y_m$. Therefore we obtain~(i). For $2\le i,j\le m$, $h_{ij}$~is homogeneous of degree $a_id_{i-1}/d_i-a_j$ with respect to $\big\{\lambda_{j_1,\dots,j_m}^{(i)}\big\}$ and $x_1,\dots,x_m, y_1,\dots,y_m$. Therefore we obtain~(ii).
\end{proof}

We have
\begin{gather*}
d_Q\Omega(P,Q)
=\frac{\!\Big\{\sum\limits_{i=1}^m ({-}1)^{i+1}(x_1\!-\!y_1)\frac{\partial \det H}{\partial y_i}(P,Q)\det G_i(Q)\Big\}\!+\!\det G_1(Q)\det H(P,Q)\!}{(x_1-y_1)^2\det G_1(P)\det G_1(Q)}dx_1dy_1,
\end{gather*}
where the numerator is homogeneous of degree $2\sum\limits_{i=2}^m(d_{i-1}/d_i-1)a_i$ with respect to the coef\/f\/icients $\big\{\lambda_{j_1,\dots,j_m}^{(i)}\big\}$ and the variables $x_1,\dots,x_m, y_1,\dots,y_m$~\cite{Aya1}. We have $\operatorname{ord}_{\infty}(\varphi_g)=2g-2$ and $\operatorname{ord}_{\infty}(\varphi_{g+1})=2g$ from Lemma~\ref{lem-2-2}. Therefore, from~(\ref{eq-2-6}), we have $\operatorname{ord}_{\infty}(\varphi_g\varphi_{g+1})=4g-2=2\sum\limits_{i=2}^m(d_{i-1}/d_i-1)a_i-2a_1$. Since
\begin{gather*}
du_1(P)=-\frac{\varphi_g(P)}{\det G_1(P)}dx_1,
\end{gather*}
if we take $\{dr_i\}_{i=1}^g$ such that $\widehat{\omega}(P,Q)=\widehat{\omega}(Q,P)$, then we f\/ind that $dr_1$ has the following form
\begin{gather}
dr_1(Q)=\sum_{i=1}^{g+1}c_i\frac{\varphi_i(Q)}{\det G_1(Q)}dy_1,\qquad c_i\in\mathbb{C}.
\label{1235}
\end{gather}

Let{\samepage
\begin{gather*}
\sum_{i=1}^gdu_i(P)dr_i(Q)=\frac{\sum c_{i_1,\dots,i_m;j_1,\dots,j_m}x_1^{i_1}\cdots x_m^{i_m}y_1^{j_1}\cdots y_m^{j_m}}{\det G_1(P)\det G_1(Q)}dx_1dy_1,
\end{gather*}
where $(i_1,\dots,i_m), (j_1,\dots,j_m)\in B(A_m)$ and $c_{i_1,\dots,i_m;j_1,\dots,j_m}\in\mathbb{C}$.}

We want to determine the coef\/f\/icients $c_{i_1,\dots,i_m;j_1,\dots,j_m}$ such that $\widehat{\omega}(P,Q)=\widehat{\omega}(Q,P)$ explicitly. For $(n,s)$ curves, i.e., $m=2$, such coef\/f\/icients are determined explicitly in~\cite{Suzuki}. Let $\varphi_g(P)=x_1^{k_1}\cdots x_m^{k_m}$ and $\varphi_{g+1}(Q)=y_1^{\ell_1}\cdots y_m^{\ell_m}$, where $(k_1,\dots,k_m), (\ell_1,\dots,\ell_m)\in B(A_m)$. In this paper, in order to derive the Jacobi inversion formulae for telescopic curves, we determine the coef\/f\/icient $c_{k_1,\dots,k_m;\ell_1,\dots,\ell_m}$ for telescopic curves.

\begin{Proposition}\label{111}
We have
\begin{gather*}
c_{k_1,\dots,k_m,\ell_1,\dots,\ell_m}=1.
\end{gather*}
\end{Proposition}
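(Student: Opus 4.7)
The plan is to reduce the computation to determining the single scalar $c_{g+1}$ appearing in~(\ref{1235}), and then to pin it down by matching the leading singular behavior of $\widehat{\omega}(P,Q)$ at $(\infty,\infty)$ against the prescribed principal part from condition~(iii). The reduction is immediate: in the numerator of $\sum_{i=1}^{g} du_i(P)\,dr_i(Q)$, the monomial $\varphi_g(P)\varphi_{g+1}(Q)$ can arise only from $i=1$, because $du_i(P)=-\varphi_{g+1-i}(P)/\det G_1(P)\,dx_1$ and the $\varphi_j$ are pairwise distinct monomials, so $\varphi_g(P)$ forces $i=1$; the $\varphi_{g+1}(Q)$-term in~(\ref{1235}) then supplies the required factor on the $Q$-side. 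Reading off signs yields $c_{k_1,\dots,k_m,\ell_1,\dots,\ell_m}=-c_{g+1}$, so the task becomes to show $c_{g+1}=-1$.

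For the matching step I would work in the local parameter $z$ at $\infty$ provided by Lemma~\ref{b}. Proposition~\ref{c} gives $du_1=(1+O(z_P))dz_P$, while $du_i$ vanishes at $P=\infty$ for $i\ge 2$ (since $w_i\ge 2$); hence only $du_1(\infty)\,dr_1(Q)=dz_P\cdot dr_1(Q)$ survives in $\sum_i du_i(\infty)\,dr_i(Q)$. Applying Proposition~\ref{c} to $du_g=-dy_1/\det G_1(Q)$ (using $\varphi_1=1$) gives $dy_1/\det G_1(Q)=-z_Q^{2g-2}(1+O(z_Q))dz_Q$, and combining with $\varphi_{g+1}(Q)=z_Q^{-2g}(1+O(z_Q))$ (from $\operatorname{ord}_\infty\varphi_{g+1}=2g$, Lemma~\ref{lem-2-2}) shows that the leading pole of $dr_1(Q)$ at $\infty$ is $-c_{g+1}z_Q^{-2}(1+O(z_Q))dz_Q$. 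On the other hand, condition~(iii) forces $\widehat{\omega}(P,Q)-(z_P-z_Q)^{-2}dz_Pdz_Q$ to be holomorphic near $(\infty,\infty)$, so upon setting $z_P=0$ we get $\widehat{\omega}(\infty,Q)=(z_Q^{-2}+O(1))dz_Pdz_Q$. Equating $z_Q^{-2}$ coefficients in
\[
\widehat{\omega}(\infty,Q)=d_Q\Omega(\infty,Q)+du_1(\infty)\,dr_1(Q)
\]
then reads $-c_{g+1}+[\,z_Q^{-2}\text{-coefficient of }d_Q\Omega(\infty,Q)\,]=1$.

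The hard part is to verify that $d_Q\Omega(\infty,Q)$ is regular in $z_Q$ at $z_Q=0$, for then $c_{g+1}=-1$ and the proposition follows. This reduces to showing that the principal part of the 1-form $\Omega(P,Q)$ at $P=\infty$ is independent of $Q$, since $d_Q$ then annihilates it. I would verify this by directly expanding $\Omega(P,Q)=\det H(P,Q)/[(x_1-y_1)\det G_1(P)]\,dx_1$ at $P=\infty$, using the dominant behavior $x_i=z_P^{-a_i}(1+O(z_P))$ from Lemma~\ref{b}, the homogeneous degrees from Lemma~\ref{167}, and the Miura structure encoded in Lemma~\ref{defining}: the leading coefficients of $\det H(P,Q)$ in the $P$-variables match those of $(x_1-y_1)\det G_1(P)$ up to a $Q$-independent scalar at each order, so the polar part of $\Omega$ in $z_P$ is indeed $Q$-free. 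This combinatorial identification of dominant terms, which replaces the $2\times 2$ determinants of~\cite{Suzuki} by the $(m-1)\times(m-1)$ determinants of the telescopic setting, is the essential technical input.
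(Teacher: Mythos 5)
Your reduction to the single coefficient $c_{g+1}$ in (\ref{1235}) and the matching of double poles at $(\infty,\infty)$ are sound: with $du_1(\infty)=dz_P$, $du_i(\infty)=0$ for $i\ge2$, and $\varphi_{g+1}(Q)\,dy_1/\det G_1(Q)=-z_Q^{-2}(1+O(z_Q))dz_Q$, the proposition is indeed equivalent to the vanishing of the $z_Q^{-2}$-coefficient of $d_Q\Omega(\infty,Q)$. The gap is in how you justify that vanishing. You reduce it to ``the principal part of $\Omega(P,Q)$ at $P=\infty$ is independent of $Q$, since $d_Q$ then annihilates it.'' That implication does not give what you need: $Q$-independence of the polar part in $z_P$ only ensures that $d_Q\Omega(P,Q)$ is regular in $z_P$ at $z_P=0$, so that the restriction makes sense; it says nothing about poles in $z_Q$. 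Writing $\Omega(P,Q)=\bigl(A_{-1}(Q)z_P^{-1}+A_0(Q)+A_1(Q)z_P+\cdots\bigr)dz_P$, one has $d_Q\Omega(\infty,Q)=dA_0(Q)\,dz_P$, so the statement actually required is that $A_0(Q)$ has no $z_Q^{-1}$ term in its expansion at $Q=\infty$; a simple pole of $A_0$ would produce exactly the double pole you must exclude. Controlling $A_0$, not the residue term, is the whole point, and your sketch never addresses it. Moreover the auxiliary claim you invoke---that the coefficients of $\det H(P,Q)$ in the $P$-variables match those of $(x_1-y_1)\det G_1(P)$ up to a $Q$-independent scalar ``at each order''---is false beyond the leading order: already the mixed terms $x_i^{d_{i-1}/d_i-2}y_i\prod_{j\neq i}x_j^{d_{j-1}/d_j-1}$ inside $\det H$ and the term $-y_1\det G_1(P)$ in the denominator are $Q$-dependent and do not match in this way.

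The needed statement is in fact true and can be obtained by the order counting you set up, pushed one order deeper: by Lemma \ref{167}(ii) every monomial of $\det H$ has total $(x,y)$-degree at most $\sum_{i\ge2}a_i(d_{i-1}/d_i-1)=2g-1+a_1$, any monomial with nontrivial $y$-part has weighted $y$-degree at least $\min_k a_k\ge2$, the factor $dx_1/\det G_1(P)$ is $Q$-independent and $\det G_1(P)$ has nonvanishing top coefficient $a_1$ (Lemma \ref{lemma5} with $k=1$), and $1/(x_1-y_1)=z_P^{a_1}(1+y_1z_P^{a_1}+\cdots)$ introduces $Q$-dependence only at relative order $a_1\ge2$. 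Hence in the $z_P$-expansion of $\Omega(P,Q)$ both the $z_P^{-1}$ and the $z_P^{0}$ coefficients are $Q$-independent, so $d_Q\Omega(\infty,Q)$ vanishes and your matching gives $c_{g+1}=-1$. As written, however, your argument establishes only the polar-part statement, and the step from there to regularity of $d_Q\Omega(\infty,Q)$ in $z_Q$ fails; this is a genuine gap. Note also that even when repaired this analytic route differs from the paper's proof, which never expands at $\infty$ but matches polynomial coefficients of the numerator $F(P,Q)$ under the symmetry $\widehat{\omega}(P,Q)=\widehat{\omega}(Q,P)$ via Lemmas \ref{lemma5}--\ref{10} and the genus formula (\ref{eq-2-6}).
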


In order to prove Proposition \ref{111}, we need some lemmas.

\begin{Lemma}\label{lemma5}For $1\le k\le m$, we have
\begin{gather*}
\det G_k(Q)=(-1)^{k+1}a_ky_1^{\gamma_1}\cdots y_m^{\gamma_m}+\sum\alpha_{i_1,\dots,i_m}y_1^{i_1}\cdots y_m^{i_m},
\end{gather*}
where $(\gamma_1,\dots,\gamma_m)$ is the unique element of $B(A_m)$ such that $\sum\limits_{j=1}^ma_j\gamma_j =\sum\limits_{j=2}^ma_{j}d_{j-1}/d_j-\sum\limits_{j=1}^ma_j+a_k$, $\alpha_{i_1,\dots,i_m}\in\mathbb{C}$ and the sum of the right-hand side is over all $(i_1,\dots ,i_m)\in B(A_m)$ such that $\sum\limits_{j=1}^ma_ji_j<\sum\limits_{j=2}^ma_jd_{j-1}/d_j-\sum\limits_{j=1}^ma_j+a_k$. If $\alpha_{i_1,\dots,i_m}\neq0$, then $\alpha_{i_1,\dots,i_m}$ contains the coefficients of the defining equations.
\end{Lemma}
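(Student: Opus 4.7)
The plan is to exploit the weighted homogeneity of $\det G_k$ together with the explicit parameterization of $X$ at infinity. By Lemma~\ref{167}(i), $\det G_k$ is weighted homogeneous of degree $D_k:=\sum_{i=2}^m a_i d_{i-1}/d_i-\sum_{i=1}^m a_i+a_k$ in the variables $y_j$ (with $\deg y_j=a_j$) and the coefficients $\lambda^{(i)}_{j_1,\dots,j_m}$ (with their assigned positive degrees). Since $\det G_k$ is polynomial in the $y_j$'s, it is regular on $X^{\mathrm{af\/f}}$ and has a pole only at $\infty$, and hence admits a unique expansion as a linear combination of the $\varphi_j$'s, i.e., of the monomials $y_1^{i_1}\cdots y_m^{i_m}$ with $(i_1,\dots,i_m)\in B(A_m)$. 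By Lemma~\ref{lem-2-1} there is a unique $(\gamma_1,\dots,\gamma_m)\in B(A_m)$ with $\sum_j a_j\gamma_j=D_k$, and every other $y^i$ appearing in the expansion has $\sum_j a_j i_j<D_k$; since the weight deficit $D_k-\sum_j a_j i_j$ must then be compensated by positive-weight factors in the $\lambda^{(i)}$'s, every such $\alpha_{i_1,\dots,i_m}$ contains the defining-equation coefficients. This gives the structural part of the lemma.

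It remains to identify the numerical constant $\alpha_\gamma$ in front of $y^\gamma$. Because $\alpha_\gamma$ has weighted degree $D_k-\sum_j a_j\gamma_j=0$ in the $\lambda^{(i)}$'s, it is independent of them and may be computed after specializing to $\lambda^{(i)}_{j_1,\dots,j_m}=0$. On the resulting ``pure'' telescopic curve the defining equations reduce to $y_i^{d_{i-1}/d_i}=\prod_{k<i}y_k^{\ell_{i,k}}$, and the local parameter $z$ of Lemma~\ref{b} is related to the $y_j$'s by the exact formula $y_j=z^{-a_j}$. In particular $y^\gamma=z^{-D_k}$, so it is enough to determine the leading $z$-expansion of $\det G_k$ at $\lambda=0$.

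For this, I use the tangential relations on $X$: differentiating $F_i=0$ gives $\sum_{j=1}^m(\partial F_i/\partial y_j)\,dy_j=0$ for $2\le i\le m$, so $(dy_1,\dots,dy_m)$ lies in the one-dimensional kernel of $G$. A standard Cramer-type identity identifies this kernel with the span of $((-1)^{j+1}\det G_j)_{j=1}^m$, so that $dy_j/((-1)^{j+1}\det G_j)$ is the same $1$-form on $X$ for all $j$; in particular $\det G_k=(-1)^{k+1}(dy_k/dy_1)\det G_1$. By Lemma~\ref{defining}, at $\lambda=0$ the matrix $G_1$ becomes lower triangular with diagonal entries $(d_{i-1}/d_i)y_i^{d_{i-1}/d_i-1}$, and the telescoping identity $\prod_{i=2}^m d_{i-1}/d_i=d_1/d_m=a_1$ yields $\det G_1|_{\lambda=0}=a_1\prod_{i=2}^m y_i^{d_{i-1}/d_i-1}=a_1 z^{-(D_k-a_k+a_1)}$. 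Substituting $dy_k/dy_1=(a_k/a_1)z^{a_1-a_k}$ then gives $\det G_k|_{\lambda=0}=(-1)^{k+1}a_k z^{-D_k}$, and comparison with $y^\gamma=z^{-D_k}$ yields $\alpha_\gamma=(-1)^{k+1}a_k$.

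The main obstacle I anticipate is precisely the lower-triangular reduction of $G_1|_{\lambda=0}$: this makes the leading coefficient computable in closed form and is the one point in the argument that truly exploits the telescopic structure. It rests on Lemma~\ref{defining} (ensuring $\partial F_i/\partial y_j|_{\lambda=0}=0$ for $j>i$) together with the multiplicative identity $d_1/d_m=a_1$. Once these are in place, the rest is a bookkeeping check of weighted degrees and pole orders at $\infty$.
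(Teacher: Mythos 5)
Your proof is correct, and while it shares the paper's first step, it evaluates the leading constant by a genuinely different mechanism. Like the paper, you use the weighted homogeneity of Lemma~\ref{167}(i) to conclude that the coefficient of $y^\gamma$ is free of the $\lambda^{(i)}_{j_1,\dots,j_m}$ (so it may be computed at $\lambda=0$) and that every lower term must carry $\lambda$'s. From there the paper stays purely linear-algebraic: at $\lambda=0$ the coefficient in question is a maximal minor of the numerical matrix $M$ of leading coefficients $(-\ell_{i,j},\,d_{i-1}/d_i)$, which is evaluated by row reduction together with the syzygy $M\,{}^t(a_1,\dots,a_m)=0$. You instead use the Cramer-type kernel identity $\det G_k=(-1)^{k+1}(dy_k/dy_1)\det G_1$ on the curve, the lower-triangularity of $G_1|_{\lambda=0}$ coming from Lemma~\ref{defining} (giving $\det G_1|_{\lambda=0}=a_1\prod_{i\ge2}y_i^{d_{i-1}/d_i-1}$ via $\prod_{i\ge2}d_{i-1}/d_i=a_1$), and the exact monomial parametrization $y_j=z^{-a_j}$ at infinity, then read off the constant by comparing orders in $z$. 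The two arguments are cousins: the paper's relation $M\,{}^t(a_1,\dots,a_m)=0$ is precisely the infinitesimal shadow of your parametrization (the tangent direction of the monomial curve). What your route buys is transparency — the answer $(-1)^{k+1}a_k$ appears as the ratio $dy_k/dy_1$ times the $k=1$ value — and it quietly handles a point the paper's coefficient extraction glosses over: at $\lambda=0$ different permutation terms of $\det G_k$ can produce different monomials of the same weight (e.g., $3y_1^3+2y_2^2$ for the $(4,6,5)$ curve, $k=3$), which must be reduced to the $B(A_m)$ basis before the coefficient of $y^\gamma$ is read off; comparing pole orders in $z$ does this reduction automatically. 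What the paper's route buys is elementarity: it needs no appeal to the parametrization, one-forms, or smoothness of the $\lambda=0$ curve away from the origin (a point you implicitly use when invoking the rank-$(m-1)$ kernel along the monomial curve, justified there because $\det G_1|_{\lambda=0}\ne0$ when all $y_i\ne0$). Do make explicit, as the paper's usage in Lemmas~\ref{lemma7} and~\ref{lemma8} requires, that the expansion in Lemma~\ref{lemma5} is the (unique) expansion in the basis of monomials with exponents in $B(A_m)$ as functions on the curve, and that its coefficients are weighted-homogeneous polynomials in the $\lambda$'s so that specializing $\lambda=0$ commutes with the expansion; with that said, your argument is complete.
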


See Appendix for proof.

\begin{Lemma}\label{lemma6}We have
\begin{gather*}
\det H(P,Q)=\prod_{i=2}^m\big(x_i^{d_{i-1}/d_i-1}+x_i^{d_{i-1}/d_i-2}y_i+\cdots+x_iy_i^{d_{i-1}/d_i-2}+y_i^{d_{i-1}/d_i-1}\big) \\
\hphantom{\det H(P,Q)=}{} +\sum\beta_{i_1,\dots,i_m;j_1,\dots,j_m}x_1^{i_1}\cdots x_m^{i_m}y_1^{j_1}\cdots y_m^{j_m},
\end{gather*}
where $\beta_{i_1,\dots,i_m;j_1,\dots,j_m}\in\mathbb{C}$ and the sum of the right-hand side is over all $(i_1,\dots,i_m), (j_1,\dots ,j_m)\in B(A_m)$ such that $\sum\limits_{k=1}^ma_k(i_k+j_k)<\sum\limits_{k=2}^ma_k(d_{k-1}/d_k-1)$. If $\beta_{i_1,\dots,i_m;j_1,\dots,j_m}\neq0$, then $\beta_{i_1,\dots,i_m;j_1,\dots,j_m}$ contains the coefficients of the defining equations.
\end{Lemma}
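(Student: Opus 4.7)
The plan is to analyze $\det H$ by splitting every entry $h_{ij}$ into the part free of the defining-equation coefficients $\lambda^{(k)}_{\cdots}$ and the part that contains some $\lambda^{(k)}_{\cdots}$, then show that in the Leibniz expansion only the identity permutation contributes to the $\lambda$-free piece, and finally use the weighted homogeneity from Lemma~\ref{167}(ii) to control the remaining terms.

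First I would single out the $\lambda$-free part of $h_{ij}$. The only monomials of $F_i$ that do not contain a $\lambda^{(i)}_{\cdots}$ are $x_i^{d_{i-1}/d_i}$ and $-\prod_{k=1}^{i-1}x_k^{\ell_{i,k}}$, and by Lemma~\ref{defining} the second of these involves only $x_1,\dots,x_{i-1}$. Taking the divided difference in $x_j$:
\begin{itemize}\itemsep=0pt
\item for $j=i$, only $x_i^{d_{i-1}/d_i}$ contributes, giving $\sum_{r=0}^{d_{i-1}/d_i-1}x_i^{d_{i-1}/d_i-1-r}y_i^{r}$;
\item for $j>i$, neither $\lambda$-free monomial of $F_i$ depends on $x_j$, so the $\lambda$-free part of $h_{ij}$ vanishes;
\item for $j<i$, a $\lambda$-free contribution comes from the divided difference of $\prod_{k=1}^{i-1}x_k^{\ell_{i,k}}$, but this piece is only needed to verify that the resulting terms have the correct weight.
\end{itemize}

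Next I would expand $\det H=\sum_{\sigma}\operatorname{sgn}(\sigma)\prod_{i=2}^{m}h_{i,\sigma(i)}$, where $\sigma$ runs over permutations of $\{2,\dots,m\}$. A trivial pigeonhole argument (if $\sigma(i)\le i$ for every $i$ then $\sigma(2)=2$, and inductively $\sigma$ is the identity) shows that every non-identity $\sigma$ has some $i$ with $\sigma(i)>i$. By the previous step such a factor $h_{i,\sigma(i)}$ has every monomial divisible by at least one $\lambda^{(k)}_{\cdots}$. Consequently the $\lambda$-free part of $\det H$ is picked out entirely by the identity permutation, and equals $\prod_{i=2}^{m}\bigl(\sum_{r=0}^{d_{i-1}/d_i-1}x_i^{d_{i-1}/d_i-1-r}y_i^{r}\bigr)$, which is exactly the stated leading product.

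Finally I would handle the lower-order terms. By Lemma~\ref{167}(ii), $\det H$ is homogeneous of weight $W:=\sum_{i=2}^{m}a_i(d_{i-1}/d_i-1)$ in the joint grading $\deg x_k=\deg y_k=a_k$, $\deg\lambda^{(i)}_{j_1,\dots,j_m}=a_id_{i-1}/d_i-\sum_k a_k j_k$, and each of the latter degrees is strictly positive because $F_i$ only contains $\lambda$-monomials with $\sum_k a_kj_k<a_id_{i-1}/d_i$. Therefore any monomial of $\det H$ involving a $\lambda$ has pure $(x,y)$-weight strictly less than $W$, giving the inequality $\sum_{k=1}^{m}a_k(i_k+j_k)<\sum_{k=2}^{m}a_k(d_{k-1}/d_k-1)$ in the lemma. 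Reducing each such monomial modulo the relations $F_i=0$ (which is weight-preserving, and unique by Lemma~\ref{lem-2-1}) rewrites it in the basis indexed by $B(A_m)$ without raising weight, yielding the desired form $\sum\beta_{i_1,\dots,i_m;j_1,\dots,j_m}x_1^{i_1}\cdots y_m^{j_m}$.

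The only step that requires genuine thought is the extraction of the $\lambda$-free part: it rests on the combination of Lemma~\ref{defining} (making the non-$\lambda$ part of $F_i$ independent of $x_i,\dots,x_m$) with the elementary pigeonhole observation about permutations of $\{2,\dots,m\}$. Everything else is bookkeeping with divided differences and with the weight grading.
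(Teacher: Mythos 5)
Your proposal is correct and takes essentially the same route as the paper's (much terser) proof: the part of the Leibniz expansion of $\det H$ free of the coefficients $\lambda^{(i)}_{j_1,\dots,j_m}$ comes only from the diagonal, giving $\prod_{i=2}^m\frac{x_i^{d_{i-1}/d_i}-y_i^{d_{i-1}/d_i}}{x_i-y_i}$, and the degree bound on the remaining terms follows from the homogeneity in Lemma~\ref{167}(ii). Your extra detail (the pigeonhole argument on permutations resting on Lemma~\ref{defining}, and the weight bookkeeping for rewriting in monomials indexed by $B(A_m)$) simply makes explicit what the paper leaves implicit.
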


\begin{proof} By the def\/inition of $\det H(P,Q)$, when we expand the determinant $\det H(P,Q)$, the terms which do not contain the coef\/f\/icients of the def\/ining equations are
\begin{gather*}
\prod_{i=2}^m\frac{x_i^{d_{i-1}/d_i}-y_i^{d_{i-1}/d_i}}{x_i-y_i}=\prod_{i=2}^m
\big(x_i^{d_{i-1}/d_i-1}+x_i^{d_{i-1}/d_i-2}y_i+\cdots+x_iy_i^{d_{i-1}/d_i-2}+y_i^{d_{i-1}/d_i-1}\big).
\end{gather*}
Therefore we obtain Lemma \ref{lemma6}.
\end{proof}

\begin{Lemma}\label{lemma7}Let
\begin{gather*}
\det G_1(Q)\det H(P,Q)=\sum\gamma_{i_1,\dots,i_m;j_1,\dots,j_m}x_1^{i_1}\cdots x_m^{i_m}y_1^{j_1}\cdots y_m^{j_m},
\end{gather*}
where $(i_1,\dots,i_m), (j_1,\dots ,j_m)\in B(A_m)$. For $i_1\ge1$, we have $\gamma_{i_1,k_2,\dots,k_m;k_1+\ell_1+2-i_1,\ell_2,\dots,\ell_m}=\gamma_{i_1,\ell_2,\dots,\ell_m;k_1+\ell_1+2-i_1,k_2,\dots,k_m}=0$. Furthermore we have
\begin{gather*}
\gamma_{0,k_2,\dots,k_m;k_1+\ell_1+2,\ell_2,\dots,\ell_m}=a_1.
\end{gather*}
\end{Lemma}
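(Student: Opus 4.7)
The plan is to exploit homogeneity to pass to $\lambda=0$, then use Lemmas~\ref{lemma5} and~\ref{lemma6} together with reduction to the $B(A_m)$-basis.

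First I would observe that each monomial appearing in the statement has total $(x,y)$-weight equal to $4g-2+2a_1=2\sum_{i=2}^m a_i(d_{i-1}/d_i-1)$; a short calculation using $\sum_i a_ik_i=2g-2$ and $\sum_i a_i\ell_i=2g$ confirms this, and importantly the value is independent of $i_1$. Since by Lemma~\ref{167} the product $\det G_1(Q)\det H(P,Q)$ is homogeneous of this total degree in the $\lambda$'s and the variables jointly, the coefficient of any top-$(x,y)$-weight monomial is a pure constant free of $\lambda$'s, and I may set $\lambda=0$ throughout the computation.

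Setting $\lambda=0$ collapses Lemmas~\ref{lemma5} and~\ref{lemma6} to their leading terms:
\[
\det G_1(Q)\big|_{\lambda=0}=a_1\, y_1^{\gamma_1}\cdots y_m^{\gamma_m},\qquad
\det H(P,Q)\big|_{\lambda=0}=\prod_{i=2}^m\sum_{j_i=0}^{d_{i-1}/d_i-1}x_i^{d_{i-1}/d_i-1-j_i}y_i^{j_i}.
\]
The decisive structural observation is that $x_1$ appears in neither factor. Expanding, every $x$-exponent tuple is $(0,d_1/d_2-1-j_2,\ldots,d_{m-1}/d_m-1-j_m)$ with $0\le j_i\le d_{i-1}/d_i-1$, so it already lies in $B(A_m)$ and no $x$-reduction modulo $F_i|_{\lambda=0}$ is necessary. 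The subsequent $y$-reductions (substitutions $y_i^{d_{i-1}/d_i}\mapsto\prod_{j<i}y_j^{\ell_{i,j}}$) never introduce $x$-variables, so the product, once expressed in the $B(A_m)\times B(A_m)$ basis, contains no $x_1$-factor at all. This immediately yields the two vanishing claims for $i_1\ge 1$.

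For the identity $\gamma_{0,k_2,\ldots,k_m;k_1+\ell_1+2,\ell_2,\ldots,\ell_m}=a_1$, exactly one summand in the expansion above has $x$-part equal to $x_2^{k_2}\cdots x_m^{k_m}$, namely the one with $j_i=d_{i-1}/d_i-1-k_i$; its $y$-part is $y_1^{\gamma_1}\prod_{i\ge 2}y_i^{d_{i-1}/d_i-1-k_i+\gamma_i}$ carrying overall coefficient $a_1$. Each $y$-reduction step replaces one monomial by another of the same weight (preserving the unit coefficient), so iteration terminates at a single $B(A_m)$-monomial of coefficient $1$. By Lemma~\ref{lem-2-1} a $B(A_m)$-element is uniquely determined by its weight, and a direct check using $\sum_j a_j\gamma_j=2g-1+a_1$ shows this common weight equals that of $y_1^{k_1+\ell_1+2}y_2^{\ell_2}\cdots y_m^{\ell_m}$, so the reduction lands exactly on this target. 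Multiplying by the prefactor $a_1$ yields the claim. The main subtlety---the step I would verify most carefully---is the monomial-preserving character of the $y$-reduction, which guarantees the final coefficient is precisely $a_1$ rather than some other positive integer.
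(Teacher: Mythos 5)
Your proof is correct and takes essentially the same route as the paper's: joint homogeneity (Lemma~\ref{167}) forces the top-weight coefficients to be $\lambda$-free, and then the $\lambda$-free leading parts of Lemmas~\ref{lemma5} and~\ref{lemma6}, reduced via the defining equations~(\ref{eq-2-5}), give both the vanishing for $i_1\ge1$ and the value $a_1$. You simply make explicit the absence of $x_1$ and the monic, weight-preserving $y$-reduction pinned down by Lemma~\ref{lem-2-1}, steps the paper leaves implicit.
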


\begin{proof}
Note that $\det G_1(Q)\det H(P,Q)$ is homogeneous of degree $2\sum\limits_{i=2}^ma_i(d_{i-1}/d_i-1)$ and
$\deg x_1^{i_1}x_2^{k_2}\cdots x_m^{k_m}y_1^{k_1+\ell_1+2-i_1}y_2^{\ell_2}\cdots y_m^{\ell_m}=2\sum\limits_{i=2}^ma_i(d_{i-1}/d_i-1)$.

Therefore, if $\gamma_{i_1,k_2,\dots,k_m;k_1+\ell_1+2-i_1,\ell_2,\dots,\ell_m}\neq0$, then
$\gamma_{i_1,k_2,\dots,k_m;k_1+\ell_1+2-i_1,\ell_2,\dots,\ell_m}$ does not contain the coef\/f\/icients of the def\/ining equations.
From Lemma~\ref{lemma6}, we have $i_1=0$.

Similarly, if $\gamma_{i_1,\ell_2,\dots,\ell_m;k_1+\ell_1+2-i_1,k_2,\dots,k_m}\neq0$, then $i_1=0$. From Lemmas \ref{lemma5},~\ref{lemma6} and~(\ref{eq-2-5}), we obtain $\gamma_{0,k_2,\dots,k_m;k_1+\ell_1+2,\ell_2,\dots,\ell_m}=a_1$.
\end{proof}

\begin{Lemma}\label{lemma8}
Let
\begin{gather*}
\frac{\partial \det H}{\partial y_i}(P,Q)\det G_i(Q)=\sum\delta_{i_1,\dots,i_m;j_1,\dots,j_m}x_1^{i_1}\cdots x_m^{i_m}y_1^{j_1}\cdots y_m^{j_m},
\end{gather*}
where $(i_1,\dots,i_m), (j_1,\dots ,j_m)\in B(A_m)$.
For $i_1\ge1$, we have $\delta_{i_1,k_2,\dots,k_m;k_1+\ell_1+1-i_1,\ell_2,\dots,\ell_m}=\delta_{i_1,\ell_2,\dots,\ell_m;k_1+\ell_1+1-i_1,k_2,\dots,k_m}=0$.
Furthermore we have
\begin{gather}
\delta_{0,k_2,\dots,k_m;k_1+\ell_1+1,\ell_2,\dots,\ell_m}=0\qquad \mbox{if}\quad i=1\label{ppppp}
\end{gather}
and
\begin{gather}
\delta_{0,k_2,\dots,k_m;k_1+\ell_1+1,\ell_2,\dots,\ell_m}=\left(\frac{d_{i-1}}{d_i}-1-k_i\right)(-1)^{i+1}a_i \qquad \mbox{if}\quad 2\le i\le m. \label{1}
\end{gather}
\end{Lemma}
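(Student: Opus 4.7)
The plan is to mimic the proof of Lemma~\ref{lemma7}. By Lemma~\ref{167}, the product $\frac{\partial\det H}{\partial y_i}(P,Q)\det G_i(Q)$ is homogeneous of degree $D:=2\sum_{j=2}^{m}a_j(d_{j-1}/d_j-1)-a_1$. Using $\operatorname{ord}_{\infty}(\varphi_g)=2g-2$, $\operatorname{ord}_{\infty}(\varphi_{g+1})=2g$ and the genus formula~(\ref{eq-2-6}), each monomial named in the claims has weighted degree exactly $D$; consequently every such $\delta$-coefficient is independent of the $\lambda$'s and is picked up from the $\lambda$-free part of the product. By Lemmas~\ref{lemma5} and~\ref{lemma6} this $\lambda$-free part equals
\begin{gather*}
E_i:=(-1)^{i+1}a_i\, y_1^{\gamma_1^{(i)}}\cdots y_m^{\gamma_m^{(i)}}\cdot\frac{\partial}{\partial y_i}\prod_{j=2}^{m}\bigl(x_j^{d_{j-1}/d_j-1}+x_j^{d_{j-1}/d_j-2}y_j+\cdots+y_j^{d_{j-1}/d_j-1}\bigr),
\end{gather*}
with $(\gamma_1^{(i)},\dots,\gamma_m^{(i)})$ given by Lemma~\ref{lemma5}. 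To read off the $\delta$'s one rewrites $E_i$ in the $B(A_m)\times B(A_m)$-basis by reducing modulo the $\lambda$-free relations $y_j^{d_{j-1}/d_j}=\prod_{k<j}y_k^{\ell_{j,k}}$ (Lemma~\ref{defining}); no reduction in $x$ is necessary because every $x_j$-exponent in $E_i$ is already at most $d_{j-1}/d_j-1$.

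Both vanishing claims are then immediate: $E_i$ contains no $x_1$ and the $y$-reduction cannot create one, so every $\delta$-coefficient with $i_1\ge 1$ vanishes in both layouts. Likewise~(\ref{ppppp}) follows, since the product does not depend on $y_1$ and is annihilated by $\partial_{y_1}$.

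For $i\ge 2$ I would extract the coefficient of the target monomial $x_2^{k_2}\cdots x_m^{k_m}y_1^{k_1+\ell_1+1}y_2^{\ell_2}\cdots y_m^{\ell_m}$ from $E_i$ directly. Matching the $x$-part forces a unique choice of terms: in the $j$-th factor of the product with $j\ne i$ one picks $x_j^{k_j}y_j^{d_{j-1}/d_j-1-k_j}$, while the differentiated $i$-th factor contributes $(d_{i-1}/d_i-1-k_i)x_i^{k_i}y_i^{d_{i-1}/d_i-2-k_i}$; the latter vanishes exactly when $k_i=d_{i-1}/d_i-1$, which is precisely when the stated formula~(\ref{1}) reads zero. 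The resulting $y$-monomial, multiplied by $y_1^{\gamma_1^{(i)}}\cdots y_m^{\gamma_m^{(i)}}$, has weighted degree $a_1(k_1+1)+2g$ by a short computation using $\sum a_jk_j=2g-2$, $\sum a_j\ell_j=2g$ and the degree identity defining $\gamma^{(i)}$; this coincides with the weighted degree of the target $B(A_m)$-monomial. Since the $\lambda$-free relations are homogeneous and replace each monomial by a single monomial, the $y$-reduction yields one $B(A_m)$-basis element of this degree, which by Lemma~\ref{lem-2-1} must be the target. The coefficient is therefore $(-1)^{i+1}a_i(d_{i-1}/d_i-1-k_i)$, establishing~(\ref{1}).

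The main obstacle is bookkeeping rather than conceptual: one must check that the $y$-monomial produced by the unique $x$-matching really does reduce to the target, but the homogeneity of the $\lambda$-free relations together with the uniqueness of the $B(A_m)$-representative (Lemma~\ref{lem-2-1}) replaces what would otherwise be an iterated reduction by a single degree check.
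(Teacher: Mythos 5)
Your proposal is correct and follows essentially the same route as the paper: the weighted-homogeneity count from Lemma~\ref{167} shows the relevant $\delta$-coefficients are $\lambda$-free, and then Lemmas~\ref{lemma5} and~\ref{lemma6} together with the defining relations~(\ref{eq-2-5}) give the vanishing statements and the value in~(\ref{1}). You merely spell out the final step (the unique $x$-matching, the degree check, and the reduction to the unique $B(A_m)$-representative via Lemma~\ref{lem-2-1}) that the paper leaves implicit, and this bookkeeping is carried out correctly.
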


\begin{proof}
Note that $\frac{\partial \det H}{\partial y_i}(P,Q)\det G_i(Q)$ is homogeneous of degree $2\sum\limits_{j=2}^m(d_{j-1}/d_{j}-1)a_{j}-a_1$
and $\deg x_1^{i_1}x_2^{k_2}\cdots x_m^{k_m}y_1^{k_1+\ell_1+1-i_1}y_2^{\ell_2}\cdots y_m^{\ell_m}=2\sum\limits_{j=2}^m(d_{j-1}/d_{j}-1)a_{j}-a_1$.

Therefore, if $\delta_{i_1,k_2,\dots,k_m;k_1+\ell_1+1-i_1,\ell_2,\dots,\ell_m}\neq0$, then
$\delta_{i_1,k_2,\dots,k_m;k_1+\ell_1+1-i_1,\ell_2,\dots,\ell_m}$ does not contain the coef\/f\/icients of the def\/ining equations.
From Lemma~\ref{lemma6}, we have $i_1=0$.

Similarly, if $\gamma_{i_1,\ell_2,\dots,\ell_m;k_1+\ell_1+1-i_1,k_2,\dots,k_m}\neq0$, then $i_1=0$.
From Lemmas \ref{lemma5}, \ref{lemma6} and (\ref{eq-2-5}), we obtain (\ref{ppppp}) and (\ref{1}).
\end{proof}

Let
\begin{gather*}
\widehat{\omega}(P,Q)=\frac{F(P,Q)}{(x_1-y_1)^2\det G_1(P)\det G_1(Q)}dx_1dy_1.
\end{gather*}

\begin{Lemma}\label{10}
For $0\le i_1\le k_1$, we have
\begin{gather}
c_{i_1,k_2,\dots,k_m;k_1+\ell_1-i_1,\ell_2,\dots,\ell_m}=i_1c_{1,k_2,\dots,k_m;k_1+\ell_1-1,\ell_2,\dots,\ell_m}\nonumber\\
\hphantom{c_{i_1,k_2,\dots,k_m;k_1+\ell_1-i_1,\ell_2,\dots,\ell_m}=}{}
+(1-i_1)c_{0,k_2,\dots,k_m;k_1+\ell_1,\ell_2,\dots,\ell_m}. \label{23}
\end{gather}
\end{Lemma}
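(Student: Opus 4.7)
The plan is to exploit the symmetry $\widehat{\omega}(P,Q)=\widehat{\omega}(Q,P)$, which is equivalent to $F(P,Q)=F(Q,P)$ under the exchange $x_j\leftrightarrow y_j$. Decompose $F(P,Q)=N(P,Q)+(x_1-y_1)^{2}M(P,Q)$, where
$$N(P,Q)=\det G_1(Q)\det H(P,Q)+\sum_{i=1}^{m}(-1)^{i+1}(x_1-y_1)\frac{\partial\det H}{\partial y_i}(P,Q)\det G_i(Q)$$
is the numerator of $d_Q\Omega$ and $M(P,Q)=\sum_{I,J}c_{I;J}x^{I}y^{J}$ is the numerator of $\sum_{i=1}^{g}du_i(P)dr_i(Q)$. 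Abbreviate $\widetilde c_j:=c_{j,k_2,\dots,k_m;\,k_1+\ell_1-j,\,\ell_2,\dots,\ell_m}$ and $\widehat c_j:=c_{j,\ell_2,\dots,\ell_m;\,k_1+\ell_1-j,\,k_2,\dots,k_m}$. The goal is to prove that the sequence $\widetilde c_0,\widetilde c_1,\dots,\widetilde c_{k_1}$ is an arithmetic progression, which is precisely~(\ref{23}).

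For each integer $a$ with $2\le a\le k_1$, let $A_a$ denote the coefficient in $F(P,Q)$ of the monomial $x_1^{a}x_2^{k_2}\cdots x_m^{k_m}y_1^{k_1+\ell_1+2-a}y_2^{\ell_2}\cdots y_m^{\ell_m}$. The contribution of $(x_1-y_1)^{2}M$ is plainly $\widetilde c_{a-2}-2\widetilde c_{a-1}+\widetilde c_{a}$. The contribution of $N$ vanishes: Lemma~\ref{lemma7} kills the $\det G_1(Q)\det H(P,Q)$ part since $a\ge 1$, while the two pieces produced by the factor $(x_1-y_1)$ multiplying $\frac{\partial\det H}{\partial y_i}\det G_i$ are $\delta^{(i)}$-coefficients with first index $a-1\ge 1$ (from $x_1$) or $a\ge 1$ (from $-y_1$), both killed by Lemma~\ref{lemma8}. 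Thus $A_a=\widetilde c_{a-2}-2\widetilde c_{a-1}+\widetilde c_{a}$. By the symmetry of $F$, the same $A_a$ equals the coefficient in $F(P,Q)$ of the swapped monomial $x_1^{b}x_2^{\ell_2}\cdots x_m^{\ell_m}y_1^{k_1+\ell_1+2-b}y_2^{k_2}\cdots y_m^{k_m}$ with $b:=k_1+\ell_1+2-a$; the identical argument (using the second halves of Lemmas~\ref{lemma7}--\ref{lemma8}, which cover exactly this swapped index pattern, and noting $b\ge\ell_1+2\ge 2$) shows that this coefficient equals $\widehat c_{b-2}-2\widehat c_{b-1}+\widehat c_{b}$.

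The decisive observation is that $\widehat c_{j}=0$ whenever $j\ge\ell_1$. Indeed $c_{I;J}$ can be nonzero only when $x^{I}$ appears in the numerator of some $du_i(P)=-\varphi_{g+1-i}(P)/\det G_1(P)\,dx_1$, which forces $\operatorname{ord}_\infty(x^{I})\le 2g-2$. For $I=(j,\ell_2,\dots,\ell_m)$ one computes $\operatorname{ord}_\infty(x^{I})=ja_1+\sum_{r\ge 2}a_r\ell_r=ja_1+(2g-\ell_1 a_1)$, and $a_1\ge 2$ then reduces $ja_1+2g-\ell_1 a_1\le 2g-2$ to $j\le\ell_1-1$. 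Since $2\le a\le k_1$ yields $b-2,b-1,b\ge\ell_1$, we obtain $A_a=0$, i.e.\ $\widetilde c_{a-2}-2\widetilde c_{a-1}+\widetilde c_{a}=0$ for $2\le a\le k_1$. Vanishing of the second difference forces the arithmetic progression $\widetilde c_{i_1}=\widetilde c_0+i_1(\widetilde c_1-\widetilde c_0)=i_1\widetilde c_1+(1-i_1)\widetilde c_0$ for $0\le i_1\le k_1$, which is~(\ref{23}).

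The most delicate step is the bookkeeping inside $N(P,Q)$: one has to isolate the single $\gamma$-contribution from $\det G_1(Q)\det H(P,Q)$ and the two $\delta$-contributions produced by the $(x_1-y_1)$ factor in front of $\frac{\partial\det H}{\partial y_i}\det G_i$, and verify that in the range $2\le a\le k_1$ all of them lie in the vanishing regimes of Lemmas~\ref{lemma7}--\ref{lemma8}; the lemmas were stated precisely so as to supply this information for both the original and the swapped index patterns. The genuinely telescopic input enters only in the final pole-order calculation, where $\operatorname{ord}_\infty\varphi_{g+1}=2g$ together with $a_1\ge 2$ provides the basis constraint $j\le\ell_1-1$.
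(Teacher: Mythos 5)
Your proof is correct and takes essentially the same route as the paper: the same decomposition of $F(P,Q)$ into the numerator of $d_Q\Omega$ plus $(x_1-y_1)^2$ times the numerator of $\sum du_i(P)dr_i(Q)$, the vanishing statements of Lemmas~\ref{lemma7} and~\ref{lemma8} for both index patterns, and the symmetry $\widehat{\omega}(P,Q)=\widehat{\omega}(Q,P)$, leading to the vanishing second difference (which the paper states as the recursion~(\ref{24}) and then solves by induction). The only difference is presentational: you make explicit, via the pole-order bound $\operatorname{ord}_{\infty}(x^I)\le 2g-2$ for the $x$-monomials of $\sum du_i(P)dr_i(Q)$, why the coefficients $c_{j,\ell_2,\dots,\ell_m;\,\cdot}$ with $j\ge\ell_1$ vanish, a point the paper leaves implicit behind its condition $k_1+\ell_1+2-i_1>\ell_1+1$.
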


\begin{proof}
If $k_1=0,1$, then Lemma \ref{10} holds obviously. Assume $k_1\ge2$. For $2\le i_1\le k_1$, from Lemmas \ref{lemma7} and~\ref{lemma8}, the coef\/f\/icient of $x_1^{i_1}x_2^{k_2}\cdots x_m^{k_m}y_1^{k_1+\ell_1+2-i_1}y_2^{\ell_2}\cdots y_m^{\ell_m}$ in $F(P,Q)$ is
\begin{gather*}
c_{i_1,k_2,\dots,k_m;k_1+\ell_1-i_1,\ell_2,\dots,\ell_m}\!-2c_{i_1-1,k_2,\dots,k_m;k_1+\ell_1-i_1+1,\ell_2,\dots,\ell_m}\!
+c_{i_1-2,k_2,\dots,k_m;k_1+\ell_1-i_1+2,\ell_2,\dots,\ell_m}.\!
\end{gather*}
On the other hand, from $k_1+\ell_1+2-i_1>1$, $k_1+\ell_1+2-i_1>\ell_1+1$, and Lemmas~\ref{lemma7},~\ref{lemma8}, the coef\/f\/icient of $x_1^{k_1+\ell_1+2-i_1}x_2^{\ell_2}\cdots x_m^{\ell_m}y_1^{i_1}y_2^{k_2}\cdots y_m^{k_m}$ in $F(P,Q)$ is zero. Therefore, from $\widehat{\omega}(P,Q)=\widehat{\omega}(Q,P)$, we have
\begin{gather}
c_{i_1,k_2,\dots,k_m;k_1+\ell_1-i_1,\ell_2,\dots,\ell_m}=2c_{i_1-1,k_2,\dots,k_m;k_1+\ell_1-i_1+1,\ell_2,\dots,\ell_m}\nonumber\\
\hphantom{c_{i_1,k_2,\dots,k_m;k_1+\ell_1-i_1,\ell_2,\dots,\ell_m}=}{}
-c_{i_1-2,k_2,\dots,k_m;k_1+\ell_1-i_1+2,\ell_2,\dots,\ell_m}. \label{24}
\end{gather}
We prove the equation (\ref{23}) by induction of $i_1$. For $i_1=0,1$, the equation~(\ref{23}) holds obviously. Assume that the equation (\ref{23}) holds for $i_1=n,n+1,\;(0\le n\le k_1-2)$. From (\ref{24}) and the assumption of induction, we have
\begin{gather*}
c_{n+2,k_2,\dots,k_m;k_1+\ell_1-n-2,\ell_2,\dots,\ell_m}=2c_{n+1,k_2,\dots,k_m;k_1+\ell_1-n-1,\ell_2,\dots,\ell_m}-c_{n,k_2,\dots,k_m;k_1+\ell_1-n,\ell_2,\dots,\ell_m} \\
\qquad{}
=(2n+2)c_{1,k_2,\dots,k_m;k_1+\ell_1-1,\ell_2,\dots,\ell_m}-2nc_{0,k_2,\dots,k_m;k_1+\ell_1,\ell_2,\dots,\ell_m}\\
\qquad\quad{}
-nc_{1,k_2,\dots,k_m;k_1+\ell_1-1,\ell_2,\dots,\ell_m}+(n-1)c_{0,k_2,\dots,k_m;k_1+\ell_1,\ell_2,\dots,\ell_m}\\
\qquad{}
=(n+2)c_{1,k_2,\dots,k_m;k_1+\ell_1-1,\ell_2,\dots,\ell_m}+(-n-1)c_{0,k_2,\dots,k_m;k_1+\ell_1,\ell_2,\dots,\ell_m}.
\end{gather*}
Therefore the equation (\ref{23}) holds for $i_1=n+2$.
\end{proof}

\begin{proof}[Proof of Proposition \ref{111}]
From Lemmas \ref{lemma7} and~\ref{lemma8}, the coef\/f\/icient of $x_2^{k_2}\cdots x_m^{k_m}y_1^{k_1+\ell_1+2}y_2^{\ell_2}$ $\cdots y_m^{\ell_m}$ in $F(P,Q)$ is
\begin{gather*}
c_{0,k_2,\dots,k_m;k_1+\ell_1,\ell_2,\dots,\ell_m}+a_1-\sum_{i=2}^m\left(\frac{d_{i-1}}{d_i}-1-k_i\right)a_i.
\end{gather*}
From $k_1+\ell_1+2> \ell_1+1$, $k_1+\ell_1+2>1$, and Lemmas~\ref{lemma7},~\ref{lemma8}, the coef\/f\/icient of $x_1^{k_1+\ell_1+2}x_2^{\ell_2}\cdots x_m^{\ell_m}$ $y_2^{k_2}\cdots y_m^{k_m}$ in $F(P,Q)$ is zero. Therefore, from $\widehat{\omega}(P,Q)=\widehat{\omega}(Q,P)$, we have
\begin{gather}
c_{0,k_2,\dots,k_m;k_1+\ell_1,\ell_2,\dots,\ell_m}+a_1-\sum_{i=2}^m\left(\frac{d_{i-1}}{d_i}-1-k_i\right)a_i=0. \label{12}
\end{gather}
If $k_1=0$, then from (\ref{eq-2-6})
\begin{gather*}
c_{0,k_2,\dots,k_m;\ell_1,\ell_2,\dots,\ell_m}=\sum_{i=2}^m\left(\frac{d_{i-1}}{d_i}-1\right)a_i-\sum_{i=1}^ma_ik_i-a_1 \\
\hphantom{c_{0,k_2,\dots,k_m;\ell_1,\ell_2,\dots,\ell_m}}{} =\sum_{i=2}^m\left(\frac{d_{i-1}}{d_i}-1\right)a_i-(2g-2)-a_1=1.
\end{gather*}
Therefore, if $k_1=0$, then Proposition~\ref{111} holds.

Assume $k_1\ge1$. From Lemmas \ref{lemma7} and \ref{lemma8}, the coef\/f\/icient of $x_1x_2^{k_2}\cdots x_m^{k_m}y_1^{k_1+\ell_1+1}y_2^{\ell_2}\cdots y_m^{\ell_m}$ in $F(P,Q)$ is
\begin{gather*}
c_{1,k_2,\dots,k_m;k_1+\ell_1-1,\ell_2,\dots,\ell_m}-2c_{0,k_2,\dots,k_m;k_1+\ell_1,\ell_2,\dots,\ell_m}+\sum_{i=2}^m\left(\frac{d_{i-1}}{d_i}-1-k_i\right)a_i.
\end{gather*}
Since $k_1\ge1$, we have $k_1+\ell_1+1>\ell_1+1$ and $k_1+\ell_1+1>1$. Therefore, from Lemmas \ref{lemma7} and~\ref{lemma8}, the coef\/f\/icient of $x_1^{k_1+\ell_1+1}x_2^{\ell_2}\cdots x_m^{\ell_m}y_1y_2^{k_2}\cdots y_m^{k_m}$ in $F(P,Q)$ is zero. Therefore, from $\widehat{\omega}(P,Q)=\widehat{\omega}(Q,P)$, we have
\begin{gather}
c_{1,k_2,\dots,k_m;k_1+\ell_1-1,\ell_2,\dots,\ell_m}-2c_{0,k_2,\dots,k_m;k_1+\ell_1,\ell_2,\dots,\ell_m}
+\sum_{i=2}^m\left(\frac{d_{i-1}}{d_i}-1-k_i\right)a_i=0.\label{13}
\end{gather}

From Lemma~\ref{10} and equations~(\ref{12}), (\ref{13}), (\ref{eq-2-6}), we have $c_{k_1,\dots,k_m;\ell_1,\dots,\ell_m}=1$.
\end{proof}

\begin{Proposition}\label{dr1}
We can take
\begin{gather*}
dr_1(Q)=-\frac{\varphi_{g+1}(Q)}{\det G_1(Q)}dy_1
\end{gather*}
such that $\widehat{\omega}(P,Q)=\widehat{\omega}(Q,P)$.
\end{Proposition}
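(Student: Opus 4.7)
The plan is to fix the top coefficient $c_{g+1}$ in the general expansion (\ref{1235}) of $dr_1$ using Proposition \ref{111}, and then exploit the gauge freedom in the choice of the tuple $(dr_1,\dots,dr_g)$ to discard the remaining holomorphic contributions.

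For the first step, write $\varphi_g(P)=x_1^{k_1}\cdots x_m^{k_m}$ and $\varphi_{g+1}(Q)=y_1^{\ell_1}\cdots y_m^{\ell_m}$, and track how $du_1(P)dr_1(Q)$ enters the numerator polynomial $\sum c_{i_1,\dots,i_m;j_1,\dots,j_m}x_1^{i_1}\cdots x_m^{i_m}y_1^{j_1}\cdots y_m^{j_m}$ of $\sum_{i=1}^g du_i(P)dr_i(Q)$. Since $du_1(P)=-\varphi_g(P)/\det G_1(P)\,dx_1$, the $i=1$ piece of $\sum du_i(P)dr_i(Q)$ contributes exactly $-c_{g+1}x_1^{k_1}\cdots x_m^{k_m}y_1^{\ell_1}\cdots y_m^{\ell_m}$ to this expansion. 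For $i\ge 2$, the factor $\varphi_{g+1-i}(P)$ is a different element of $B(A_m)$ from $\varphi_g(P)$, so no further contribution to the $x_1^{k_1}\cdots x_m^{k_m}$-coefficient can arise. Comparing with Proposition \ref{111}, which states $c_{k_1,\dots,k_m;\ell_1,\dots,\ell_m}=1$, forces $c_{g+1}=-1$.

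For the second step, each term $c_i\varphi_i(Q)/\det G_1(Q)\,dy_1$ in (\ref{1235}) with $1\le i\le g$ equals the holomorphic differential $-c_i\,du_{g+1-i}(Q)$. I will use the standard gauge freedom: replacing $(dr_1,\dots,dr_g)$ by $(dr_1+\sum_j s_{1j}du_j,\dots,dr_g+\sum_j s_{gj}du_j)$ for any symmetric matrix $(s_{ij})$ keeps each $dr_i$ a second-kind differential with a pole only at $\infty$ and preserves the symmetry $\widehat{\omega}(P,Q)=\widehat{\omega}(Q,P)$, since the change modifies $\widehat{\omega}$ by the symmetric 2-form $\sum_{i,j}s_{ij}du_i(P)du_j(Q)$. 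Choosing $s_{1j}=c_{g+1-j}$ for $1\le j\le g$, $s_{j1}=s_{1j}$, and $s_{ij}=0$ for $i,j\ge 2$ cancels the entire holomorphic tail of $dr_1$ while preserving symmetry, yielding $dr_1(Q)=-\varphi_{g+1}(Q)/\det G_1(Q)\,dy_1$.

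The only substantive input is Proposition \ref{111}; once $c_{g+1}=-1$ is pinned down, the second step is just a change of representative within the affine space of admissible $\{dr_i\}_{i=1}^g$, so I do not expect any real obstacle.
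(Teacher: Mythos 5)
Your proposal is correct and follows essentially the same route as the paper: the paper likewise deduces from (\ref{1235}) and Proposition \ref{111} that $dr_1(Q)=-\varphi_{g+1}(Q)/\det G_1(Q)\,dy_1-\sum_{i=1}^g c_i'\,du_i(Q)$, and then removes the holomorphic tail by adding to $\widehat{\omega}$ the symmetric holomorphic $2$-form $c_1'du_1(P)du_1(Q)+\sum_{i\ge2}c_i'\bigl(du_1(P)du_i(Q)+du_i(P)du_1(Q)\bigr)$, which is precisely your symmetric-matrix gauge transformation with $s_{1j}=s_{j1}=c_j'$ and $s_{ij}=0$ otherwise. Your extra remark that only the $i=1$ term can contribute to the coefficient of $\varphi_g(P)\varphi_{g+1}(Q)$ just makes explicit what the paper leaves implicit, so there is no substantive difference.
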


\begin{proof}
From (\ref{1235}) and Proposition \ref{111}, $dr_1(Q)$ has the following form
\begin{gather*}
dr_1(Q)=-\frac{\varphi_{g+1}(Q)}{\det G_1(Q)}dy_1-\sum_{i=1}^gc_i'du_i(Q),
\end{gather*}
for certain constants $c_i'\in\mathbb{C}$. Let
\begin{gather*}\omega_1(P,Q)=c_1'du_1(P)du_1(Q)+\sum_{i=2}^gc_i'du_1(P)du_i(Q)+\sum_{i=2}^gc_i'du_i(P)du_1(Q).
\end{gather*}
Then $\omega_1(P,Q)$ is holomorphic and $\omega_1(P,Q)=\omega_1(Q,P)$. By adding $\omega_1(P,Q)$ to $\widehat{\omega}(P,Q)$, we obtain Proposition~\ref{dr1}.
\end{proof}

\begin{Remark}
There is a certain freedom of choice of the second kind dif\/ferentials $\{dr_i\}_{i=1}^g$, i.e., we can add a linear combination of the holomorphic one forms $\{du_i\}_{i=1}^g$ to $\{dr_i\}_{i=1}^g$. In~\cite{Eilers}, for hyperelliptic curves, it is discussed what choice of $\{dr_i\}_{i=1}^g$ is better for the problem that one considers.
\end{Remark}

\section{Sigma function of telescopic curves}\label{section4}

Let $X$ be a telescopic curve of genus $g\geq 1$ associated with $(a_1,\dots,a_m)$. We take a fundamental dif\/ferential of second kind
\begin{gather}
\widehat{\omega}(P,Q)=d_Q\Omega(P,Q)+\sum_{i=1}^g du_i(P)dr_i(Q),\label{alge}
\end{gather}
such that
\begin{gather}
dr_1(Q)=-\frac{\varphi_{g+1}(Q)}{\det G_1(Q)}dy_1.\label{spe}
\end{gather}
This choice is possible from Proposition~\ref{dr1}. The set $\{du_i,dr_i\}_{i=1}^g$ becomes a symplectic basis of the cohomology group $H^1(X,{\mathbb C})$ (see \cite{Aya1, N1}).

Take a symplectic basis $\{\alpha_i,\beta_i\}_{i=1}^g$ of the homology group and def\/ine the period matrices by
\begin{gather*}
2\omega_1=\left(\int_{\alpha_j}du_{i}\right), \qquad\! 2\omega_2=\left(\int_{\beta_j}du_{i}\right),\qquad\!
-2\eta_1=\left(\int_{\alpha_j}dr_i\right), \qquad\! -2\eta_2=\left(\int_{\beta_j}dr_i\right).
\end{gather*}
The normalized period matrix is given by $\tau=\omega_1^{-1}\omega_2$.

Let $\delta=\tau\delta'+\delta''$, $\delta',\delta''\in {\mathbb R}^g$ be the Riemann's constant with respect to the choice $(\{\alpha_i,\beta_i\},\infty)$. We set $\delta={}^t({}^t\delta', {}^t\delta'')$.

The sigma function $\sigma(u)$, $u=(u_1,\dots ,u_g)$ is def\/ined by
\begin{gather*}
\sigma(u)=C\exp\left(\frac{1}{2}{}^tu\eta_1\omega_1^{-1}u\right) \theta[\delta]\left((2\omega_1)^{-1}u,\tau\right),
%\label{eq-3-5}
\end{gather*}
where $\theta[\delta](u)$ is the Riemann's theta function with the characteristic $\delta$ def\/ined by
\begin{gather*}
\theta[\delta](u)=\sum_{n\in\mathbb{Z}^g}\exp\big\{\pi\sqrt{-1}\;{}^t(n+\delta')\tau(n+\delta')+2\pi\sqrt{-1}\;{}^t(n+\delta')(u+\delta'')\big\},
\end{gather*}
and $C$ is a constant. Since $\delta$ is a half-period from Lemma~\ref{lem-2-2}, $\sigma(u)$ vanishes on the Abel--Jacobi image of the $(g-1)$-th symmetric products of the telescopic curves. This property is important in the proof of Proposition~\ref{maineq}.

We have the following propositions.

\begin{Proposition}[\cite{Aya1, N1}]\label{period}
For $m_1,m_2\in\mathbb{Z}^g$, we have
\begin{gather*}
\frac{\sigma(u+2\omega_1m_1+2\omega_2m_2)}{\sigma(u)}
=(-1)^{2({}^t\delta'm_1-{}^t\delta''m_2)+{}^tm_1m_2}\\
\hphantom{\frac{\sigma(u+2\omega_1m_1+2\omega_2m_2)}{\sigma(u)}=}{}
\times\exp\big\{{}^t(2\eta_1m_1+2\eta_2m_2)(u+\omega_1m_1+\omega_2m_2)\big\}.
\end{gather*}
\end{Proposition}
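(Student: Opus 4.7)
The plan is to derive the quasi-periodicity by plugging the shift $u\mapsto u+2\omega_1m_1+2\omega_2m_2$ into the explicit definition
$\sigma(u)=C\exp\bigl(\tfrac{1}{2}{}^tu\,\eta_1\omega_1^{-1}u\bigr)\theta[\delta]\bigl((2\omega_1)^{-1}u,\tau\bigr)$
and letting the prefactor and the theta function each contribute. Under the shift, the argument of the theta transforms as $(2\omega_1)^{-1}u\mapsto(2\omega_1)^{-1}u+m_1+\tau m_2$, so the standard quasi-periodicity of $\theta[\delta]$ with characteristic $\delta={}^t({}^t\delta',{}^t\delta'')$ contributes the factor $\exp\bigl(2\pi i({}^t\delta'm_1-{}^t\delta''m_2)-\pi i\,{}^tm_2\tau m_2-2\pi i\,{}^tm_2(2\omega_1)^{-1}u\bigr)$. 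This already produces the sign $(-1)^{2({}^t\delta'm_1-{}^t\delta''m_2)}$ and isolates a linear-in-$u$ term whose coefficient must be matched.

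Next I would expand the ratio of exponential prefactors. Writing $v=2\omega_1m_1+2\omega_2m_2$, the ratio equals $\exp\bigl({}^tv\,\eta_1\omega_1^{-1}u+\tfrac{1}{2}{}^tv\,\eta_1\omega_1^{-1}v\bigr)$. The linear-in-$u$ piece here is $2\,{}^tm_1{}^t\omega_1\eta_1\omega_1^{-1}u+2\,{}^tm_2{}^t\omega_2\eta_1\omega_1^{-1}u$. Using the symmetry ${}^t\omega_1\eta_1={}^t\eta_1\omega_1$ (Riemann bilinear relation for holomorphic and second-kind differentials, recorded in \cite{Aya1,N1}) and combining with the theta's linear piece $-2\pi i\,{}^tm_2(2\omega_1)^{-1}u$, the Legendre-type relation ${}^t\omega_1\eta_2-{}^t\eta_1\omega_2=\tfrac{\pi i}{2}I_g$ turns the total linear-in-$u$ exponent into precisely $2\,{}^t(\eta_1m_1+\eta_2m_2)u$, as required.

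For the constant (quadratic in $m_1,m_2$) piece, one collects $\tfrac{1}{2}{}^tv\,\eta_1\omega_1^{-1}v$ from the prefactor and $-\pi i\,{}^tm_2\tau m_2$ from the theta. Re-expressing everything through $\tau=\omega_1^{-1}\omega_2$ and applying the Legendre relation a second time, together with the symmetries ${}^t\omega_1\eta_1={}^t\eta_1\omega_1$ and ${}^t\omega_2\eta_2={}^t\eta_2\omega_2$, collapses this quadratic form to $2\,{}^t(\eta_1m_1+\eta_2m_2)(\omega_1m_1+\omega_2m_2)$. The leftover contribution from the Legendre substitution produces the half-integer shift $\pi i\,{}^tm_1m_2$, which supplies the sign $(-1)^{{}^tm_1m_2}$ that multiplies the one already coming from the characteristic.

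The main obstacle is bookkeeping: all simplifications depend on invoking the right Riemann bilinear and Legendre relations with correct signs and transpositions. In particular, one must verify that $\{du_i,dr_i\}_{i=1}^g$ is indeed symplectic for the telescopic curve, so that ${}^t\omega_1\eta_1$, ${}^t\omega_2\eta_2$ are symmetric and ${}^t\omega_1\eta_2-{}^t\eta_1\omega_2=\tfrac{\pi i}{2}I_g$; this is precisely the property recorded in \cite{Aya1,N1} for the choice of $\{dr_i\}$ making $\widehat{\omega}(P,Q)$ symmetric, so no new analytic input is needed beyond careful matrix algebra.
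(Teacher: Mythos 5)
The paper itself gives no proof of this proposition—it is quoted from \cite{Aya1,N1}—and your plan is essentially the standard argument from those references: substitute the shift into the definition of $\sigma$, apply the quasi-periodicity of $\theta[\delta]$ under $z\mapsto z+m_1+\tau m_2$, and simplify using the symmetry relations and the generalized Legendre relation for the symplectic basis $\{du_i,dr_i\}_{i=1}^g$, so the approach is correct. The one bookkeeping point (which you yourself flagged) is the sign of the Legendre relation: for the stated formula to come out, with ${}^t\omega_1\eta_1={}^t\eta_1\omega_1$ one needs ${}^t\eta_1\omega_2-{}^t\omega_1\eta_2=\tfrac{\pi\sqrt{-1}}{2}I_g$, i.e., the opposite sign to the one you wrote; with that correction the linear-in-$u$ terms collapse to ${}^t(2\eta_1m_1+2\eta_2m_2)u$, the quadratic terms give ${}^t(2\eta_1m_1+2\eta_2m_2)(\omega_1m_1+\omega_2m_2)$, and the leftover $\pi\sqrt{-1}\,{}^tm_1m_2$ supplies $(-1)^{{}^tm_1m_2}$ exactly as you describe.
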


\begin{Proposition}[\cite{Aya1, N1}]\label{odd}
We have $\sigma(-u)=(-1)^{|\mu(A_m)|}\sigma(u)$.
\end{Proposition}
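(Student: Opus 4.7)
The plan is to reduce the claim to a parity computation for the characteristic $\delta$ of the theta function appearing in the definition of $\sigma(u)$, and then to identify that parity with $|\mu(A_m)|\bmod 2$.

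First, I would look at the exponential prefactor $\exp\bigl(\tfrac12{}^tu\,\eta_1\omega_1^{-1}u\bigr)$. The matrix $\eta_1\omega_1^{-1}$ is symmetric (this is a standard consequence of the Legendre-type relations between the period matrices of $\{du_i\}$ and the second-kind differentials $\{dr_i\}$ that come from $\widehat\omega$ being a symmetric fundamental 2-form of the second kind). Consequently the quadratic form is even in $u$, and the whole prefactor satisfies $\exp\bigl(\tfrac12{}^t(-u)\eta_1\omega_1^{-1}(-u)\bigr)=\exp\bigl(\tfrac12{}^tu\,\eta_1\omega_1^{-1}u\bigr)$. So the parity of $\sigma$ equals the parity of $\theta[\delta]\bigl((2\omega_1)^{-1}u,\tau\bigr)$.

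Next, because $\delta=\tau\delta'+\delta''$ is a half-period by Lemma~\ref{lem-2-2} (the vector of Riemann constants is a half-period), the characteristics $\delta',\delta''$ lie in $\tfrac12\mathbb{Z}^g$. For theta functions with half-integer characteristic one has the standard transformation law
\begin{gather*}
\theta[\delta](-v,\tau)=(-1)^{4\,{}^t\delta'\delta''}\theta[\delta](v,\tau),
\end{gather*}
which is immediate from substituting $n\mapsto -n-2\delta'$ in the defining series. Hence
\begin{gather*}
\sigma(-u)=(-1)^{4\,{}^t\delta'\delta''}\sigma(u),
\end{gather*}
and it remains to prove the congruence $4\,{}^t\delta'\delta''\equiv |\mu(A_m)|\pmod 2$.

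This last congruence is the main obstacle. I would follow the strategy of Nakayashiki in~\cite{N1} for $(n,s)$ curves and adapt it to the telescopic setting. The Riemann constant for the pointed curve $(X,\infty)$ can be evaluated via the Abel--Jacobi image of a theta characteristic, namely any divisor class $\Delta$ such that $2\Delta=K_X$; one possible choice is the half of the canonical class represented via the gap sequence, using that $K_X$ is linearly equivalent to $(2g-2)\infty$ because $du_g$ vanishes to order $2g-2$ at $\infty$ (Lemma~\ref{lem-2-2} again). The parity of the half-period $\delta$ — i.e., $4\,{}^t\delta'\delta''\bmod 2$ — is then encoded combinatorially by the gap sequence $w_1<\cdots<w_g$ at $\infty$, and the symmetry of the Young diagram $\mu(A_m)$ (stated just before the proposition) lets one rewrite this combinatorial count as $|\mu(A_m)|\bmod 2$. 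Concretely, using $w_g=2g-1$ and the symmetry $w_i+w_{g+1-i}=2g$, the weight $|\mu(A_m)|=\sum_{i=1}^g(w_{g+1-i}-(i-1))=\sum w_i-\binom{g}{2}$ can be matched with the parity contribution coming from the characteristic of the theta constant associated with the half-canonical divisor $(g-1)\infty$. The bulk of the work is therefore a parity bookkeeping that is already recorded in~\cite{N1} for the $(n,s)$ case and that transcribes verbatim to telescopic curves once one uses Proposition~\ref{c} and the symmetry of $\mu(A_m)$; no properties beyond these are required.
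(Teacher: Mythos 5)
Your first two reductions are fine: the quadratic prefactor $\exp\bigl(\tfrac12{}^tu\,\eta_1\omega_1^{-1}u\bigr)$ is automatically even in $u$ (symmetry of $\eta_1\omega_1^{-1}$ is not even needed for this), and for a half-integer characteristic the substitution $n\mapsto -n-2\delta'$ in the theta series does give $\theta[\delta](-v,\tau)=(-1)^{4\,{}^t\delta'\delta''}\theta[\delta](v,\tau)$, so $\sigma(-u)=(-1)^{4\,{}^t\delta'\delta''}\sigma(u)$. Note, for calibration, that the paper itself does not prove Proposition~\ref{odd} but cites \cite{Aya1,N1}; there the sign is in effect pinned down by identifying the lowest-order term of the Taylor expansion of $\sigma$ at the origin with the Schur polynomial of the self-conjugate partition $\mu(A_m)$, whose parity under $u\mapsto-u$ is $(-1)^{|\mu(A_m)|}$, rather than by evaluating $4\,{}^t\delta'\delta''$ directly.

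The genuine gap is that the entire content of the proposition, namely the congruence $4\,{}^t\delta'\delta''\equiv|\mu(A_m)|\pmod 2$, is not proved: you defer it to ``parity bookkeeping already recorded in \cite{N1}'' without carrying it out, and the one concrete ingredient you offer is false. The claimed symmetry $w_i+w_{g+1-i}=2g$ does not hold for telescopic curves: for the $(4,6,5)$ curve the gaps are $1,2,3,7$, so $w_2+w_3=5\neq 8$. The self-conjugacy of the Young diagram is equivalent to the semigroup symmetry ``$w$ is a gap iff $2g-1-w$ is a non-gap'', which pairs gaps with non-gaps, not gaps with gaps. A correct completion along your lines would argue: (i) the parity of the half-integer characteristic $\delta$ equals the parity of the vanishing multiplicity of $\theta[\delta]$ at $v=0$; (ii) since $K_X\sim(2g-2)\infty$ by Lemma~\ref{lem-2-2}, that multiplicity equals $\ell((g-1)\infty)$ by Riemann's singularity theorem; and (iii) $\ell((g-1)\infty)\equiv|\mu(A_m)|\pmod 2$, which follows because $\ell((g-1)\infty)=\sharp\{i\,|\,w_{g+1-i}\geq g\}=\sharp\{i\,|\,\mu_i\geq i\}$ is exactly the number of diagonal boxes of the Young diagram, and for a self-conjugate diagram the off-diagonal boxes pair up, so $|\mu(A_m)|$ has the parity of the diagonal count. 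None of steps (i)--(iii) appears in your write-up, and as stated your ``concrete'' bridge between the characteristic and $|\mu(A_m)|$ rests on an identity that fails already in the paper's running example.
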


The fundamental dif\/ferential of second kind $\widehat{\omega}(P,Q)$ is expressed by the sigma function as follows.

\begin{Proposition}\label{maineq}
If $\sum\limits_{i=1}^gP_i\in S^g(X\backslash\infty)$ is a general divisor, then for any $1\le i\le g$ we have
\begin{gather}
\widehat{\omega}(P,Q)=d_Pd_Q\log\sigma\left(\int_Q^Pdu-\sum_{j\neq i}\int_{\infty}^{P_j}du\right),\label{main}
\end{gather}
where $du={}^t(du_1,\dots,du_g)$.
\end{Proposition}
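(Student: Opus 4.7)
The plan is to set $F(P,Q):=d_Pd_Q\log\sigma(v(P,Q))$ with $v(P,Q)=\int_Q^P du-\sum_{j\ne i}\int_\infty^{P_j}du$, prove that $F$ is a fundamental differential of second kind, and identify it with $\widehat{\omega}$ through the uniqueness recalled in Section \ref{section3} together with a normalization check. The first step is well-definedness on $X\times X$: under monodromy of $P$ or $Q$ around a homology cycle, $v$ shifts by a lattice vector $\lambda=2\omega_1m_1+2\omega_2m_2$, and Proposition \ref{period} shows that $\log\sigma(v+\lambda)-\log\sigma(v)$ is affine-linear in $v$. Its mixed derivative $d_Pd_Q$ vanishes, hence $F$ is single-valued. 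It is essential here that the factor $\exp(\tfrac12{}^tu\eta_1\omega_1^{-1}u)$ built into the definition of $\sigma$ has cancelled the quadratic-in-$v$ term that would otherwise be present in the transformation law of $\theta[\delta]$.

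Next I would analyse the pole structure. Since $\delta$ is a half-period by Lemma \ref{lem-2-2}, Riemann's singularity theorem (Section \ref{section6}) implies that $\sigma$ vanishes exactly on the Abel--Jacobi image of $S^{g-1}(X\setminus\infty)$. For a general divisor $\sum P_i$, the divisor $Q+\sum_{j\ne i}P_j$ is non-special of degree $g$, so $\sigma(v(P,Q))$ viewed as a function of $P$ has simple zeros precisely at $P=Q$ and at the $P_j$ with $j\ne i$; the analogous statement holds in $Q$. In a common uniformizer near $P=Q$ one obtains $\sigma(v(P,Q))=A(Q)(t_P-t_Q)+O((t_P-t_Q)^2)$ with $A(Q)\ne 0$ generically, hence $F(P,Q)=dt_P\,dt_Q/(t_P-t_Q)^2+\text{regular}$, the correct double pole. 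At the extra zeros $P=P_j$ (and symmetrically $Q=P_j$) the logarithmic singular part of $\log\sigma(v)$ in one variable is $\log(t_P-a_j)$, independent of the other variable, so the mixed derivative annihilates it. Thus $F$ is holomorphic off the diagonal with the required double pole along it, making it a fundamental differential of second kind.

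By the uniqueness recalled in Section \ref{section3}, $F-\widehat{\omega}=\sum_{i,j=1}^gc_{ij}\,du_i(P)du_j(Q)$ with $c_{ij}=c_{ji}\in\mathbb{C}$. The main obstacle is showing that all $c_{ij}$ vanish, and this is where the specific choice of $dr_1$ given by Proposition \ref{dr1} and the normalization constant $C$ of $\sigma$ come into play. I would expand both $F$ and $\widehat{\omega}$ at $P=\infty$ using the local parameter of Lemma \ref{b} and the leading orders supplied by Proposition \ref{c}: the contribution $du_1(P)dr_1(Q)$ in the construction of $\widehat{\omega}$, with $dr_1(Q)=-\varphi_{g+1}(Q)/\det G_1(Q)\,dy_1$, is precisely what matches the leading $t_P^{-1}$ part of $F$, and the weight-homogeneity recorded in Lemma \ref{167} and exploited in Proposition \ref{111} then propagates this matching to all orders, forcing every $c_{ij}$ to vanish. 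This expansion-matching is the telescopic analogue of the argument carried out for $(n,s)$ curves in \cite{EEL,N1}.
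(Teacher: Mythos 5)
Your first two steps track the paper's proof: single-valuedness via Proposition \ref{period}, identification of the zero divisor of $\sigma\left(\int_Q^Pdu-\sum_{j\neq i}\int_{\infty}^{P_j}du\right)$ as the diagonal plus vertical and horizontal components (the paper cites Mumford for this), the resulting double pole along the diagonal, holomorphy elsewhere, and hence $F-\widehat{\omega}=\sum c_{ij}du_i(P)du_j(Q)$. (A small slip: the horizontal zeros in $Q$ are not at $Q=P_j$ but at $g-1$ other points $R_j$ determined by the divisor; this does not damage your argument, since all that matters is that these components are constant in one of the variables, so $d_Pd_Q\log$ annihilates them.)

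The genuine gap is your final normalization step. You propose to kill the constants $c_{ij}$ by expanding both sides at $P=\infty$ and invoking the homogeneity facts of Lemma \ref{167} and Proposition \ref{111} together with the choice \eqref{spe} of $dr_1$. That cannot work as stated: Lemma \ref{167} and Proposition \ref{111} control only the algebraic side (the coefficients of $\det H$, $\det G_1$ and of $\sum du_i\,dr_i$), whereas the expansion of $d_Pd_Q\log\sigma$ at $\infty$ involves values of $\wp_{k,\ell}$ at arguments depending on $Q$ and the $P_j$ — transcendental data about which weight-homogeneity says nothing; indeed the Klein-type identity \eqref{maineq2} relating the two expansions is a \emph{consequence} of Proposition \ref{maineq}, so using such a matching here is circular. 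Moreover, the special choice \eqref{spe} is irrelevant to this proposition (it only becomes essential for Theorem \ref{main2}); what ties $\sigma$ to the particular $\widehat{\omega}$ of \eqref{alge} is that $\sigma$ is built from the period matrices $\omega_1,\eta_1$ of the chosen $\{du_i,dr_i\}$. The paper exploits exactly this: it integrates both $\widehat{\omega}$ and the right-hand side of \eqref{main} over the cycles $\alpha_j$ in the second variable, obtaining $-{}^tdu(P)(2\eta_1e_j)$ in both cases — for $\widehat{\omega}$ directly from \eqref{alge}, and for the sigma expression from the quasi-periodicity in Proposition \ref{period} — so the difference has vanishing $\alpha$-periods, giving $C\,\omega_1=0$ and hence $C=0$ since $\omega_1$ is invertible. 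Some argument of this period-matching type (not homogeneity, and not the overall constant $C$ in the definition of $\sigma$, which log-derivatives kill anyway) is what your proof is missing.
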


\begin{proof}
For simplicity we prove for $i=1$. Let $e=-\sum\limits_{j=2}^{g}\int_{\infty}^{P_j}du$. Then, from Proposition \ref{odd}, we have $\sigma(e)=0$. Let
\begin{gather*}
E(P,Q)=\sigma\left(\int_Q^Pdu-\sum_{j=2}^g\int_{\infty}^{P_j}du\right).
\end{gather*}
Suppose $E(P,Q)$ vanishes identically with respect to $P$, $Q$. Then we have $E(\infty,P_1)=0$. Therefore there exist $g-1$ points $P_1',\dots,P_{g-1}'\in X$ such that
\begin{gather*}
\sum_{i=1}^g\int_{\infty}^{P_i}du=\sum_{i=1}^{g-1}\int_{\infty}^{P_i'}du.
\end{gather*}
This contradicts the fact that $\sum\limits_{j=1}^gP_j\in S^g(X\backslash\infty)$ is a general divisor. Consequently, $E(P,Q)$ does not vanish identically with respect to $P$, $Q$. Therefore there exist $2g-2$ points $Q_1,\dots,Q_{g-1}$, $R_1,\dots,R_{g-1}\in X$ such that the divisor of zeros of $E(P,Q)$ is the sum of $\{(R,R)\,|\,R\in X\}$, $\{Q_j\}\times X$, $X\times\{R_j\}$ ($j=1,\dots,g-1$), including multiplicities (cf.~\cite[p.~156]{Mumford}). Let $\widetilde{\omega}(P,Q)$ be the right-hand side of~(\ref{main}). First we consider the series expansion of $\widetilde{\omega}(P,Q)$ around a~point~$(R,R)$. Let $t$ be a local coordinate around $R\in X$ such that $t(R)=0$ and $t_P$, $t_Q$ two copies of~$t$. Then we have the expansion $E(P,Q)=(t_P-t_Q)t_P^kt_Q^{\ell}f(t_P,t_Q)$ around $(R,R)$, where $k$, $\ell$ are nonnegative integers and $f(t_P,t_Q)$ is a holomorphic function of~$t_P$,~$t_Q$ satisfying $f(t_P,t_Q)\neq0$ for any~$t_P$,~$t_Q$. Hence, around $(R,R)$, we have the expansion
\begin{gather*}
\widetilde{\omega}(P,Q)=\frac{1}{(t_P-t_Q)^2}+(\text{holomorphic function of}\ t_P,t_Q).
\end{gather*}
Next we prove $\widetilde{\omega}(P,Q)$ is holomorphic around a point $(S_1,S_2)$ satisfying $S_1\neq S_2$. For a local coordinate $t_i$ around $S_i$ such that $t_i(S_i)=0$, $i=1,2$, we have the expansion $E(P,Q)=t_1^at_2^bg(t_1,t_2)$, where $a$, $b$ are nonnegative integers and $g(t_1,t_2)$ is a holomorphic function of $t_1$, $t_2$ satisfying $g(t_1,t_2)\neq0$ for any $t_1$, $t_2$. Hence $\widetilde{\omega}(P,Q)$ is holomorphic around $(S_1,S_2)$ satisfying $S_1\neq S_2$. Therefore $\widehat{\omega}(P,Q)-\widetilde{\omega}(P,Q)$ is holomorphic on $X\times X$. Consequently there exist constants $\{c_{ij}\}$ such that
\begin{gather}
\widehat{\omega}(P,Q)-\widetilde{\omega}(P,Q)=\sum_{ij}c_{ij}du_i(P)du_j(Q).\label{holcon}
\end{gather}
From (\ref{alge}) we have
\begin{gather*}
\int_{\alpha_j}\widehat{\omega}=-\, {}^tdu(P)(2\eta_1e_j),
\end{gather*}
where the integration is with respect to the second variable and $e_j$ is the $j$-th unit vector. On the other hand we have
\begin{gather*}
\int_{\alpha_j}\widetilde{\omega}=d_P\log\sigma\left(\int_{P_0}^Pdu-\sum_{j=2}^g\int_{\infty}^{P_j}du-2\omega_1e_j\right)-d_P\log\sigma\left(\int_{P_0}^Pdu-\sum_{j=2}^g\int_{\infty}^{P_j}du\right),
\end{gather*}
where $P_0$ is a base point of $\alpha_j$. From Proposition \ref{period} we have
\begin{gather*}
\int_{\alpha_j}\widetilde{\omega}=d_P\left\{-\,{}^t(2\eta_1e_j)\int_{P_0}^Pdu\right\}=-\, {}^t(2\eta_1e_j)du(P).
\end{gather*}
Therefore we have $\int_{\alpha_j}(\widehat{\omega}-\widetilde{\omega})=0$. If we set $C=(c_{ij})$, then from~(\ref{holcon}) we have ${}^tdu(P)\cdot C\cdot(2\omega_1e_j)=0$. Hence we have $C\cdot(2\omega_1e_j)=0$ for any $j$, i.e., $C\omega_1=0$. Since $\omega_1$ is a regular matrix, we have $C=0$. Therefore we have $\widehat{\omega}=\widetilde{\omega}$.
\end{proof}

We def\/ine the function
\begin{gather*}
\wp_{i,j}(u)=-\frac{\partial^2}{\partial u_i\partial u_j}\log\sigma(u).
\end{gather*}
Then we have
\begin{gather}
\wp_{i,j}(u)=\frac{\sigma_i(u)\sigma_j(u)-\sigma_{i,j}(u)\sigma(u)}{\sigma(u)^2},\label{178}
\end{gather}
where $\sigma_i(u)=\frac{\partial}{\partial u_i}\sigma(u)$ and $\sigma_{i,j}(u)=\frac{\partial^2}{\partial u_i\partial u_j}\sigma(u)$.

We have
\begin{gather*}
d_Pd_Q\log\sigma\left(\int_Q^Pdu-\sum_{j\neq i}\int_{\infty}^{P_j}du\right) \\
\qquad {}=\sum_{k,\ell=1}^g\wp_{k,\ell}\left(\int_Q^Pdu-\sum_{j\neq i}\int_{\infty}^{P_j}du\right)\frac{\varphi_{g+1-k}(P)\varphi_{g+1-\ell}(Q)}{\det G_1(P)\det G_1(Q)}dx_1dy_1.
\end{gather*}

From Proposition \ref{maineq}, we have
\begin{gather}
\frac{F(P,Q)}{(x_1-y_1)^2}=\sum_{k,\ell=1}^g\wp_{k,\ell}\left(\int_Q^Pdu-\sum_{j\neq i}\int_{\infty}^{P_j}du\right)\varphi_{g+1-k}(P)\varphi_{g+1-\ell}(Q),\label{maineq2}
\end{gather}
as a meromorphic function of $(P,Q)\in X^2$. This formula is an analogue of the formula of Klein (cf.~\cite[Theorem 3.4]{EEL}).

\section{Frobenius--Stickelberger matrix}\label{section5}

For $P_1,\dots,P_k,P\in X$, we def\/ine the matrix (Frobenius--Stickelberger matrix) as in the case of~\cite{Matsutani}
\begin{gather}
\left(\begin{matrix}
\varphi_1(P_1) & \varphi_2(P_1) & \cdots & \varphi_{k+1}(P_1) \\
\varphi_1(P_2) & \varphi_2(P_2) & \cdots & \varphi_{k+1}(P_2) \\
\vdots & \vdots &\ddots & \vdots \\
\varphi_1(P_k) & \varphi_2(P_k) & \cdots & \varphi_{k+1}(P_k) \\
\varphi_1(P) & \varphi_2(P) & \cdots & \varphi_{k+1}(P)
\end{matrix}\right).\label{eq-2-9}
\end{gather}
For $1\le i\le k+1$, let $\psi_{k+1}(P_1,\dots,P_k;P)$ and $\psi_k^{(i)}(P_1,\dots,P_k)$ be the determinants of the matrix (\ref{eq-2-9}) and the matrix obtained by deleting the last row and the $i$-th column from (\ref{eq-2-9}), respectively. Note that $\psi_k^{(i)}(P_1,\dots,P_k)$ does not vanish identically as a meromorphic function of $P_1,\dots,P_k$. We def\/ine $\mu_{k+1}(P_1,\dots,P_k;P)$ by
\begin{gather*}
\mu_{k+1}(P_1,\dots,P_k;P)=\frac{\psi_{k+1}(P_1,\dots,P_k;P)}{\psi_k^{(k+1)}(P_1,\dots,P_k)}
\end{gather*}
and $\mu_{k,i}(P_1,\dots,P_k)$ by
\begin{gather*}
\mu_{k+1}(P_1,\dots,P_k;P)=\sum_{i=1}^{k+1}(-1)^{k+1-i}\mu_{k,i}(P_1,\dots,P_k)\varphi_i(P)
\end{gather*}
and $\mu_{k,i}(P_1,\dots,P_k)=0$ for $i\ge k+2$. Then, for $1\le i\le k+1$, we have
\begin{gather*}
\mu_{k,i}(P_1,\dots,P_k)=\frac{\psi_k^{(i)}(P_1,\dots,P_k)}{\psi_k^{(k+1)}(P_1,\dots,P_k)}.
\end{gather*}
Note that $\mu_{k,i}(P_1,\dots,P_k)$ can be regarded as a meromorphic function on $X^{k}$.

\begin{Proposition}\label{seq}
Let $z_k$ be the local parameter of $P_k$ around $\infty$ satisfying~\eqref{eq-3-1}. Then, as a~meromorphic function of $P_1,\dots,P_k$, we have the expansion
\begin{gather*}
\mu_{k,i}(P_1,\dots,P_k)=\mu_{k-1,i}(P_1,\dots,P_{k-1})z_k^{N(k)-N(k+1)}+O\big(z_k^{N(k)-N(k+1)+1}\big),
\end{gather*}
where $1\le i\le k$ and $N(n)=\operatorname{ord}_{\infty}(\varphi_n)$ for a positive integer~$n$.
\end{Proposition}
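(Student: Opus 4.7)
The plan is to Laplace-expand the two determinants $\psi_k^{(i)}$ and $\psi_k^{(k+1)}$ defining $\mu_{k,i}$ along the row indexed by $P_k$, read off the leading behavior in $z_k$ from Lemma~\ref{b}, and identify the leading minors with Frobenius--Stickelberger determinants in $(P_1,\dots,P_{k-1})$. Because $N(1)<N(2)<\cdots<N(k+1)$, the largest pole in the last row of $\psi_k^{(k+1)}$ comes from $\varphi_k$, while for $\psi_k^{(i)}$ with $i\le k$ it comes from $\varphi_{k+1}$ (still a column of the matrix since $i\ne k+1$); all other contributions differ from these by at least one power of $z_k$.

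By Lemma~\ref{b}, $\varphi_j(P_k)=z_k^{-N(j)}(1+O(z_k))$ for every $j$. Expanding $\psi_k^{(k+1)}=\det(\varphi_q(P_p))_{1\le p,q\le k}$ along its last row gives
\begin{gather*}
\psi_k^{(k+1)}(P_1,\dots,P_k)=\sum_{q=1}^k(-1)^{k+q}\varphi_q(P_k)\,\psi_{k-1}^{(q)}(P_1,\dots,P_{k-1}),
\end{gather*}
because the $(k,q)$-minor is, by definition, $\psi_{k-1}^{(q)}(P_1,\dots,P_{k-1})$. The dominant $q=k$ term has cofactor sign $(-1)^{2k}=+1$ and order $z_k^{-N(k)}$, while each remaining $q<k$ term is at worst $O(z_k^{-N(k-1)})=z_k^{-N(k)}O(z_k)$, so
\begin{gather*}
\psi_k^{(k+1)}(P_1,\dots,P_k)=z_k^{-N(k)}\bigl[\psi_{k-1}^{(k)}(P_1,\dots,P_{k-1})+O(z_k)\bigr].
\end{gather*}
The same expansion applied to $\psi_k^{(i)}$, whose columns are indexed by $\{1,\dots,k+1\}\setminus\{i\}$, shows that for $1\le i\le k$ the column $\varphi_{k+1}$ sits in the last column position; its cofactor sign is again $+1$, and after deleting row $k$ and the $\varphi_{k+1}$-column the remaining minor is exactly $\psi_{k-1}^{(i)}(P_1,\dots,P_{k-1})$. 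All other summands are $z_k^{-N(k+1)}O(z_k)$, giving
\begin{gather*}
\psi_k^{(i)}(P_1,\dots,P_k)=z_k^{-N(k+1)}\bigl[\psi_{k-1}^{(i)}(P_1,\dots,P_{k-1})+O(z_k)\bigr].
\end{gather*}

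Dividing the two expansions, and using that $\psi_{k-1}^{(k)}(P_1,\dots,P_{k-1})$ is not identically zero as a meromorphic function so the denominator is invertible at a generic $(P_1,\dots,P_{k-1})$, yields
\begin{gather*}
\mu_{k,i}(P_1,\dots,P_k)=z_k^{N(k)-N(k+1)}\,\frac{\psi_{k-1}^{(i)}(P_1,\dots,P_{k-1})+O(z_k)}{\psi_{k-1}^{(k)}(P_1,\dots,P_{k-1})+O(z_k)}\\
=\mu_{k-1,i}(P_1,\dots,P_{k-1})\,z_k^{N(k)-N(k+1)}+O\bigl(z_k^{N(k)-N(k+1)+1}\bigr),
\end{gather*}
which is the claim.

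The only delicate point is the sign/index bookkeeping in the expansion of $\psi_k^{(i)}$: one must check that after suppressing the $\varphi_i$-column from the $k\times(k+1)$ Frobenius--Stickelberger matrix, the column $\varphi_{k+1}$ still sits in the last position (so the cofactor sign is $+1$) and that the surviving minor obtained after removing row $k$ is exactly $\psi_{k-1}^{(i)}(P_1,\dots,P_{k-1})$. Once this matching is settled, everything else is a direct order count in $z_k$ using the strict increase of $N$.
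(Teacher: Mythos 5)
Your proposal is correct and follows essentially the same route as the paper: expand $\psi_k^{(i)}$ and $\psi_k^{(k+1)}$ along the row of $P_k$, isolate the dominant entries $\varphi_{k+1}(P_k)=z_k^{-N(k+1)}(1+O(z_k))$ and $\varphi_k(P_k)=z_k^{-N(k)}(1+O(z_k))$ whose cofactors are exactly $\psi_{k-1}^{(i)}$ and $\psi_{k-1}^{(k)}$, and divide. The sign/index bookkeeping you flag as the delicate point checks out, and the order count matches the paper's computation verbatim.
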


\begin{proof}
As a meromorphic function of $P_1,\dots,P_k$, we have
\begin{gather*}
\mu_{k,i}(P_1,\dots,P_k)=\frac{\psi_k^{(i)}(P_1,\dots,P_k)}{\psi_k^{(k+1)}(P_1,\dots,P_k)}
=\frac{\varphi_{k+1}(P_k)\psi_{k-1}^{(i)}(P_1,\dots,P_{k-1})+O\big(z_k^{-N(k+1)+1}\big)}
{\varphi_k(P_k)\psi_{k-1}^{(k)}(P_1,\dots,P_{k-1})+O\big(z_k^{-N(k)+1}\big)} \\
\hphantom{\mu_{k,i}(P_1,\dots,P_k)}{}
=\frac{\big\{z_k^{-N(k+1)}+O\big(z_k^{-N(k+1)+1}\big)\big\}\psi_{k-1}^{(i)}(P_1,\dots,P_{k-1})+O\big(z_k^{-N(k+1)+1}\big)}{\big\{z_k^{-N(k)}
+O\big(z_k^{-N(k)+1}\big)\big\}\psi_{k-1}^{(k)}(P_1,\dots,P_{k-1})+O\big(z_k^{-N(k)+1}\big)}\\
\hphantom{\mu_{k,i}(P_1,\dots,P_k)}{}
=\frac{z_k^{-N(k+1)}}{z_k^{-N(k)}}\cdot\frac{\psi_{k-1}^{(i)}(P_1,\dots,P_{k-1})+O(z_k)}{\psi_{k-1}^{(k)}(P_1,\dots,P_{k-1})+O(z_k)}\\
\hphantom{\mu_{k,i}(P_1,\dots,P_k)}{}
=\mu_{k-1,i}(P_1,\dots,P_{k-1})z_k^{N(k)-N(k+1)}+O\big(z_k^{N(k)-N(k+1)+1}\big). \tag*{\qed}
\end{gather*}
\renewcommand{\qed}{}
\end{proof}

\section{Riemann's singularity theorem}\label{section6}

Let $X$ be a telescopic curve of genus $g\ge1$. For a divisor $D$, let $L(D)$ be the vector space consisting of meromorphic functions $f$ on $X$ such that $\operatorname{div}(f)+D\ge0$ and the zero function on~$X$, and $\ell(D)$ the dimension of $L(D)$.

For $1\le k\le g-1$ and $P_1,\dots,P_k\in X\backslash\infty$, let
\begin{gather*}
u^{[k]}=\sum_{i=1}^k\int_{\infty}^{P_k}du
\end{gather*}
and
\begin{gather*}
n_k=\ell(P_1+\cdots+P_k+(g-k-1)\infty).
\end{gather*}
Then the following theorem holds.

\begin{Theorem}[Riemann's singularity theorem, cf.~\cite{ACGH,Matsutani2, Mumford}]\label{riemann}\quad
\begin{enumerate}\itemsep=0pt
\item[$1.$] For every multi-index $(\alpha_1,\dots,\alpha_m)$ with $\alpha_i\in\{1,\dots,g\}$ and $m<n_k$,
\begin{gather*}
\frac{\partial^m}{\partial u_{\alpha_1}\cdots \partial u_{\alpha_m}}\sigma\big(u^{[k]}\big)=0.
\end{gather*}

\item[$2.$] There exists a multi-index $(\beta_1,\dots,\beta_{n_k})$, which in general depends on $P_1+\cdots+P_k$, such that
\begin{gather*}
\frac{\partial^{n_k}}{\partial u_{\beta_1}\cdots \partial u_{\beta_{n_k}}}\sigma\big(u^{[k]}\big)\neq0.
\end{gather*}
\end{enumerate}
\end{Theorem}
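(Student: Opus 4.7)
The plan is to reduce the statement to the classical Riemann singularity theorem for the theta function, using the defining formula
\[
\sigma(u)=C\exp\left(\frac{1}{2}{}^tu\eta_1\omega_1^{-1}u\right)\theta[\delta]\bigl((2\omega_1)^{-1}u,\tau\bigr)
\]
recalled in Section~\ref{section4}.

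First I would observe that the constant $C$ and the Gaussian factor $\exp(\frac{1}{2}{}^tu\eta_1\omega_1^{-1}u)$ are nowhere-vanishing entire functions of $u$. By the Leibniz rule this implies that, for any nonnegative integer $M$, the sigma function vanishes at $u^{[k]}$ together with all of its partial derivatives of order $\le M$ if and only if $\theta[\delta](v,\tau)$ vanishes at $v^{[k]}:=(2\omega_1)^{-1}u^{[k]}$ together with all of its partial derivatives of order $\le M$. Because $(2\omega_1)^{-1}$ is invertible, the induced linear change of coordinates carries multi-indices of fixed total order $m$ into linear combinations of multi-indices of the same total order $m$, so the order of vanishing of $\sigma$ at $u^{[k]}$ equals the order of vanishing of $\theta[\delta]$ at $v^{[k]}$, and the existence of a non-vanishing partial of total order exactly $n_k$ is likewise preserved.

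Next I would invoke the classical Riemann singularity theorem (in the form reviewed in \cite{ACGH,Matsutani2,Mumford}) applied to $\theta[\delta]$. Since $\delta$ is the vector of Riemann constants with base point $\infty$, and since by Lemma~\ref{lem-2-2} it is a half-period so that the theta characteristic is unambiguous, the classical theorem asserts that for every effective divisor $D$ of degree $g-1$, the multiplicity of $\theta[\delta]$ at the Abel--Jacobi image $(2\omega_1)^{-1}u(D)$ of $D$ equals $\ell(D)$. I would then take
\[
D=P_1+\cdots+P_k+(g-k-1)\infty,
\]
an effective divisor of degree $g-1$ whose Abel--Jacobi image with base point $\infty$ is precisely $u^{[k]}$, because the $\infty$ summands contribute zero. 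Therefore $\theta[\delta]$ vanishes at $v^{[k]}$ to multiplicity exactly $\ell(D)=n_k$, and translating back through the reduction above yields both claims: all partials of order $<n_k$ vanish at $u^{[k]}$, and there exists at least one multi-index of total order exactly $n_k$ for which the partial is non-zero.

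The only delicate point is reconciling the characteristic $\delta$ built into the definition of $\sigma$ with the shift by the vector of Riemann constants demanded by the classical formulation of the singularity theorem; once Lemma~\ref{lem-2-2} has secured the half-period property of $\delta$ and the base point $\infty$ has been fixed as throughout the paper, the remainder of the argument is a routine translation of the classical result across the nowhere-vanishing exponential prefactor and the linear isomorphism $(2\omega_1)^{-1}$.
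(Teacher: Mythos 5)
The paper does not prove this theorem at all---it states it as a classical result with the citations \cite{ACGH,Matsutani2,Mumford}---and your reduction of $\sigma$ to $\theta[\delta]$ through the nowhere-vanishing exponential prefactor and the invertible linear map $(2\omega_1)^{-1}$, followed by the classical Riemann singularity theorem applied to the effective degree-$(g-1)$ divisor $D=P_1+\cdots+P_k+(g-k-1)\infty$, is exactly the standard justification those references supply, and it is correct. One small remark: the half-period property of $\delta$ from Lemma~\ref{lem-2-2} is not actually needed in your argument; it suffices that $\delta$ is the vector of Riemann constants for the base point $\infty$, so that the characteristic already effects the shift required by the classical theorem.
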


The following proposition is stated for the hyperelliptic curves in \cite{O} and for the curves $y^r=f(x)$ in \cite{Matsutani,Matsutani2}. The same statement is also satisf\/ied for telescopic curves. The proof is similar to \cite[Proposition~5.2]{O}.

\begin{Proposition}\label{dim} If $\psi_k^{(k+1)}(P_1,\dots,P_k)\neq0$, then we have $n_k=\sharp\{n\,|\,0\le N(n)\le g-k-1\}$, where $N(n)=\operatorname{ord}_{\infty}(\varphi_n)$ for a positive integer $n$ and $\sharp$ means the number of elements.
\end{Proposition}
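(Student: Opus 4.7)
The plan is to compute $n_k=\ell(D)$ for $D:=P_1+\cdots+P_k+(g-k-1)\infty$ by dualizing via Riemann--Roch to a divisor supported mostly at $\infty$, using the hypothesis $\psi_k^{(k+1)}(P_1,\dots,P_k)\ne 0$ to control an evaluation map at $P_1,\dots,P_k$, and finally invoking the symmetry of the Young diagram $\mu(A_m)$ to close the count. The three inputs I would draw from earlier sections are: $\operatorname{div}(du_g)=(2g-2)\infty$ (from Lemma~\ref{lem-2-2} and Proposition~\ref{c}), so $K\sim(2g-2)\infty$; the fact that $\{\varphi_n\}_{n\ge 1}$ is a basis of meromorphic functions on $X$ with pole only at $\infty$, arranged by pole order $N(n)=\operatorname{ord}_\infty(\varphi_n)$; and the symmetry of the Young diagram $\mu(A_m)$ (the Proposition stated just after Proposition~\ref{c}), which is equivalent to the symmetry of the Weierstrass semigroup at $\infty$.

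Since $\deg D=g-1$, Riemann--Roch gives $\ell(D)=\ell(K-D)=\ell(E)$, where $E:=(g+k-1)\infty-P_1-\cdots-P_k$. I would then consider the evaluation map $\mathrm{ev}\colon L((g+k-1)\infty)\to\mathbb{C}^k$, $f\mapsto(f(P_1),\dots,f(P_k))$, whose kernel is $L(E)$. A trivial count (there are only $g$ gaps in total, so $N(n)\le n+g-1$ for every $n\ge 1$) shows $\varphi_1,\dots,\varphi_k\in L((g+k-1)\infty)$, and their images under $\mathrm{ev}$ form the columns of the $k\times k$ matrix $(\varphi_j(P_i))_{1\le i,j\le k}$. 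The determinant of that matrix is $\psi_k^{(k+1)}(P_1,\dots,P_k)\ne 0$ by hypothesis, so $\mathrm{ev}$ is surjective and $\ell(E)=\ell((g+k-1)\infty)-k$.

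Finally, since $\ell(n\infty)=\sharp\{j\mid N(j)\le n\}$, the difference $\ell((g+k-1)\infty)-\ell((g-k-1)\infty)$ equals the number of non-gaps in $[g-k,g+k-1]$, which I need to show is $k$. By the semigroup symmetry, the involution $m\mapsto 2g-1-m$ on $[0,2g-1]$ exchanges non-gaps and gaps; in particular it puts the non-gaps in $[g,g+k-1]$ in bijection with the gaps in $[g-k,g-1]$. Combining with the non-gaps in $[g-k,g-1]$ accounts for every element of the interval $[g-k,g-1]$, giving exactly $k$ non-gaps in $[g-k,g+k-1]$. Chaining the equalities yields $n_k=\ell(D)=\ell(E)=\ell((g-k-1)\infty)=\sharp\{n\mid 0\le N(n)\le g-k-1\}$.

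The only nonroutine step is the surjectivity of $\mathrm{ev}$, but under the hypothesis on $\psi_k^{(k+1)}$ this is immediate from the definition in Section~\ref{section5}; the remaining steps (Riemann--Roch, the identification of $K$ with $(2g-2)\infty$, and the semigroup symmetry) are each supplied by results already established in the paper.
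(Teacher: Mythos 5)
Your proof is correct, and it follows the route the paper itself only gestures at (it gives no proof of Proposition~\ref{dim}, deferring to the analogous statement in~\cite{O}): Serre duality with canonical divisor $(2g-2)\infty$ coming from Lemma~\ref{lem-2-2}, surjectivity of the evaluation map at $P_1,\dots,P_k$ guaranteed by $\psi_k^{(k+1)}(P_1,\dots,P_k)\neq 0$, and the symmetry of the Weierstrass semigroup at $\infty$ to show there are exactly $k$ non-gaps in $[g-k,\,g+k-1]$. The only step left tacit is that the hypothesis forces the $P_i$ to be pairwise distinct (coinciding points would give equal rows, hence $\psi_k^{(k+1)}=0$), which is what identifies the kernel of your evaluation map with $L\big((g+k-1)\infty-P_1-\cdots-P_k\big)$; note also that the semigroup symmetry you import via the Young-diagram proposition can be read off directly from Lemma~\ref{lem-2-2}, since $w_g=2g-1$ together with the fact that there are exactly $g$ gaps forces $n\mapsto 2g-1-n$ to exchange gaps and non-gaps.
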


\section{Jacobi inversion formulae for telescopic curves}\label{section7}

For $1\le k\le g$ and $P_1,\dots,P_k\in X\backslash\infty$, let
\begin{gather*}
u^{[k]}=\sum_{j=1}^k\int_{\infty}^{P_j}du.
\end{gather*}

\subsection[$k=g$]{$\boldsymbol{k=g}$}

\begin{Theorem}\label{main2} As a meromorphic function of $P_1,\dots,P_g$, we have
\begin{gather}
\wp_{1,i}\big(u^{[g]}\big)=(-1)^{i-1}\mu_{g,g+1-i}(P_1,\dots,P_g),\qquad 1\le i\le g.\label{rem}
\end{gather}
\end{Theorem}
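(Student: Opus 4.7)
The plan is to apply the Klein-type identity~\eqref{maineq2} to the divisor $\sum_{j=1}^{g}P_{j}$ and extract the coefficient of the leading basis function $\varphi_{g}(P)$ at $P=\infty$. For each $i\in\{1,\dots,g\}$ I would take the same $i$ as the index in Proposition~\ref{maineq}, set $Q=P_{i}$ in~\eqref{maineq2}, and let $P\to\infty$ along the local parameter $z$ of Lemma~\ref{b}. The key intermediate identity to establish is
\begin{gather*}
\varphi_{g+1}(P_{i})=\sum_{\ell=1}^{g}\wp_{1,\ell}\bigl(u^{[g]}\bigr)\varphi_{g+1-\ell}(P_{i}),\qquad 1\le i\le g.
\end{gather*}
On the right-hand side of~\eqref{maineq2} this specialization makes the $\wp$-argument collapse to $\int_{P_{i}}^{\infty}du-\sum_{j\ne i}\int_{\infty}^{P_{j}}du=-u^{[g]}$, and by the evenness of $\wp$ coming from Proposition~\ref{odd} the argument may be replaced by $u^{[g]}$. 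Because $\operatorname{ord}_{\infty}\varphi_{g+1-k}<\operatorname{ord}_{\infty}\varphi_{g}=2g-2$ for every $k\ge 2$, the ratio $\varphi_{g+1-k}(P)/\varphi_{g}(P)$ vanishes at $P=\infty$, so only the $k=1$ summands contribute at leading order, giving the right-hand coefficient $\sum_{\ell=1}^{g}\wp_{1,\ell}(u^{[g]})\varphi_{g+1-\ell}(P_{i})$.

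The same coefficient on the left-hand side would be computed from the decomposition $\widehat{\omega}=d_{Q}\Omega+\sum_{i=1}^{g}du_{i}(P)dr_{i}(Q)$ of~\eqref{alge}. Proposition~\ref{c} gives $du_{1}(P)=(1+O(z))dz$ and $du_{i}(P)=O(z)\,dz$ for $i\ge 2$, so $\sum du_{i}(P)dr_{i}(Q)$ contributes $dz\cdot dr_{1}(Q)$ at leading order, where the concrete form $dr_{1}(Q)=-\varphi_{g+1}(Q)/\det G_{1}(Q)\,dy_{1}$ from Proposition~\ref{dr1} is essential. An asymptotic analysis tracking pole orders at $\infty$ through Lemmas~\ref{lemma5} and \ref{lemma6} and the genus formula~\eqref{eq-2-6} also shows that $d_{Q}\Omega$ does not contribute to the $dz\cdot dy_{1}$ component at $z=0$. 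Translating $\widehat{\omega}|_{z=0}=dz\cdot dr_{1}(Q)$ back into the form of~\eqref{maineq2} through $du_{1}(P)=-\varphi_{g}(P)/\det G_{1}(P)\,dx_{1}$ produces
\begin{gather*}
\lim_{P\to\infty}\frac{F(P,Q)}{(x_{1}-y_{1})^{2}\varphi_{g}(P)}=\varphi_{g+1}(Q),
\end{gather*}
and evaluation at $Q=P_{i}$ completes the intermediate identity.

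To finish, I view the $g$ relations as a single identity in $Q\in X$: the function
\begin{gather*}
\mathcal{G}(Q):=\varphi_{g+1}(Q)-\sum_{\ell=1}^{g}\wp_{1,\ell}\bigl(u^{[g]}\bigr)\varphi_{g+1-\ell}(Q)
\end{gather*}
lies in $L\bigl((2g)\cdot\infty\bigr)$ and vanishes at $P_{1},\dots,P_{g}$. For a general divisor, Riemann--Roch gives $\dim L\bigl((2g)\cdot\infty-\sum_{j}P_{j}\bigr)=1$, and the Frobenius--Stickelberger function $\mu_{g+1}(P_{1},\dots,P_{g};Q)=\sum_{i=1}^{g+1}(-1)^{g+1-i}\mu_{g,i}\varphi_{i}(Q)$ of Section~\ref{section5} lies in this one-dimensional subspace with coefficient $\mu_{g,g+1}=1$ of $\varphi_{g+1}(Q)$; hence $\mathcal{G}(Q)=\mu_{g+1}(P_{1},\dots,P_{g};Q)$. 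Matching the coefficients of $\varphi_{i}(Q)$ for $i=1,\dots,g$ yields $-\wp_{1,g+1-i}(u^{[g]})=(-1)^{g+1-i}\mu_{g,i}$, which after the substitution $i\mapsto g+1-i$ is the stated formula. The main technical obstacle will be the leading-order left-hand-side calculation, in particular verifying that $d_{Q}\Omega|_{z=0}$ does not enter the $\varphi_{g}(P)$-coefficient; the remainder of the proof is essentially linear algebra on $L\bigl((2g)\cdot\infty\bigr)$.
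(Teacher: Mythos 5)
Your proposal takes essentially the same route as the paper: specialize the Klein-type identity \eqref{maineq2} at $Q=P_i$, divide by $\varphi_g(P)$ and let $P\to\infty$, where a pole-order count shows the $d_Q\Omega$ part drops out and the choice \eqref{spe} of $dr_1$ yields the intermediate identity $\varphi_{g+1}(P_i)=\sum_{\ell}\wp_{1,\ell}\big(u^{[g]}\big)\varphi_{g+1-\ell}(P_i)$, which is then matched against the vanishing of $\mu_{g+1}(P_1,\dots,P_g;P_i)$. The only cosmetic difference is your endgame via $\dim L\big(2g\cdot\infty-\sum_j P_j\big)=1$ in place of the paper's invertibility of the matrix $(\varphi_j(P_i))$, i.e.\ $\psi_g^{(g+1)}\neq0$; both rest on the same generality assumption, and passing from general divisors to the meromorphic identity is immediate in either version.
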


\begin{proof}Let $S$ be the set of $(P_1,\dots,P_g)\in(X\backslash\infty)^g$ such that $\sum\limits_{i=1}^gP_i$ is a general divisor and $P_i\neq P_j$ for any $i$, $j$ $(i\neq j)$. First we prove the equation (\ref{rem}) for $(P_1,\dots,P_g)\in S$. From Lemma~\ref{167}(ii), we have $\operatorname{ord}_{\infty}(\det H(P,Q))\le\sum\limits_{i=2}^m(d_{i-1}/d_i-1)a_i$ and $\operatorname{ord}_{\infty}(x_1\frac{\partial \det H}{\partial y_k}(P,Q))\le\sum\limits_{i=2}^m(d_{i-1}/d_i-1)a_i-a_k+a_1$ with respect to $P$. On the other hand, we have $\operatorname{ord}_{\infty}(x_1^2\varphi_g(P))=-1+a_1+\sum\limits_{i=2}^m(d_{i-1}/d_i-1)a_i$. Since $a_i\ge2$ for any $i$, we have $\operatorname{ord}_{\infty}(\det H(P,Q))<\operatorname{ord}_{\infty}(x_1^2\varphi_g(P))$ and $\operatorname{ord}_{\infty}(x_1\frac{\partial \det H}{\partial y_k}(P,Q))<\operatorname{ord}_{\infty}(x_1^2\varphi_g(P))$ with respect to~$P$. We let $P\to\infty$ after dividing the both sides of~(\ref{maineq2}) by $\varphi_{g}(P)$ and $Q=P_i$. Then, from~(\ref{spe}), we obtain
\begin{gather*}
\varphi_{g+1}(P_i)=\sum_{\ell=1}^g\wp_{1,\ell}\big(u^{[g]}\big)\varphi_{g+1-\ell}(P_i)=\sum_{j=1}^g\wp_{1, g+1-j}\big(u^{[g]}\big)\varphi_j(P_i),
\end{gather*}
where we use the fact that $\wp_{1,\ell}(u)$ is an even function from Proposition~\ref{odd}. From $(P_1,\dots,P_g)\in S$, we have $\psi_g^{(g+1)}(P_1,\dots,P_g)\neq0$ (cf.\ \cite[p.~154]{ACGH}). From $\mu_{g+1}(P_1,\dots,P_g;P_i)=0$ for any $i$, we have
\begin{gather*}
\mu_{g+1}(P_1,\dots,P_g;P_i)=\varphi_{g+1}(P_i)+\sum_{j=1}^{g}(-1)^{g+1-j}\mu_{g,j}(P_1,\dots,P_g)\varphi_j(P_i)=0.
\end{gather*}
Therefore we have
\begin{gather*}
\sum_{j=1}^g\wp_{1,g+1-j}\big(u^{[g]}\big)\varphi_j(P_i)=\sum_{j=1}^g(-1)^{g-j}\mu_{g,j}(P_1,\dots,P_g)\varphi_j(P_i).
\end{gather*}
Therefore we have
\begin{gather*}
A\cdot
\left(\begin{matrix}
\alpha_1 \\
\vdots \\
\alpha_g
\end{matrix}\right)
=\left(\begin{matrix}
0 \\
\vdots \\
0
\end{matrix}\right),
\end{gather*}
where $A=(\varphi_j(P_i))_{1\le i,j\le g}$ and $\alpha_j=\wp_{1,g+1-j}\big(u^{[g]}\big)-(-1)^{g-j}\mu_{g,j}(P_1,\dots,P_g)$. From $(P_1,\dots,P_g)$ $\in S$, we have $\det A\neq0$. Therefore we have $\alpha_j=0$, i.e., $\wp_{1,g+1-j}\big(u^{[g]}\big)=(-1)^{g-j}\mu_{g,j}(P_1,\dots,P_g)$ for any $j$. We set $i=g+1-j$, then we have $\wp_{1,i}(u^{[g]})=(-1)^{i-1}\mu_{g,g+1-i}(P_1,\dots,P_g)$.

Let $T$ be the set of $(P_1,\dots,P_g)\in(X\backslash\infty)^g$ such that $\psi_g^{(g+1)}(P_1,\dots,P_g)\neq0$. Then we have $S=T$ (cf.~\cite[p.~154]{ACGH}).
Since the equation~(\ref{rem}) holds for any $(P_1,\dots,P_g)\in T$, it holds as a~meromorphic function of $P_1,\dots,P_g$.
\end{proof}

\begin{Remark}
As discussed in \cite{Matsutani}, for a hyperelliptic curve, $\mu_{g,g+1-i}$ is equal to the symmetric polynomial $e_i$.
Therefore Theorem~\ref{main2} is a~natural generalization of the Jacobi inversion formulae for hyperelliptic curves to telescopic curves.
\end{Remark}

\subsection[$k\le g-1$]{$\boldsymbol{k\le g-1}$}

Let $a=\min\{a_1,\dots,a_m\}$. Hereafter we assume $g-a\le k\le g-1$.

\begin{Theorem}\label{main3}
$\sigma_{g-k}\big(u^{[k]}\big)$ does not vanish identically with respect to $P_1,\dots,P_k$ and we have, as a~meromorphic function of $P_1,\dots,P_k$,
\begin{gather*}
\frac{\sigma_i\big(u^{[k]}\big)}{\sigma_{g-k}\big(u^{[k]}\big)}=(-1)^{k+i-g}\mu_{k,g+1-i}(P_1,\dots,P_{k}), \qquad 1\le i\le g.
\end{gather*}
\end{Theorem}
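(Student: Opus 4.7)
The plan is to reduce Theorem~\ref{main3} to the already-established case $k=g$ (Theorem~\ref{main2}) by introducing auxiliary points and degenerating them to $\infty$. I augment $P_1,\dots,P_k$ by $g-k$ free points $Q_1,\dots,Q_{g-k}\in X\setminus\infty$, apply Theorem~\ref{main2} to the general $g$-divisor $P_1+\cdots+P_k+Q_1+\cdots+Q_{g-k}$ with $u^{[g]}=u^{[k]}+\sum_{l=1}^{g-k}\int_\infty^{Q_l}du$, and take the ratio of the formulas for the indices $i$ and $g-k$:
\[
\frac{\wp_{1,i}(u^{[g]})}{\wp_{1,g-k}(u^{[g]})}=(-1)^{k+i-g}\,\frac{\mu_{g,g+1-i}(P_1,\dots,P_k,Q_1,\dots,Q_{g-k})}{\mu_{g,k+1}(P_1,\dots,P_k,Q_1,\dots,Q_{g-k})}.
\]
The desired identity will drop out by letting $Q_{g-k},\dots,Q_1\to\infty$ sequentially and identifying the two limits.

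For the right-hand side I iterate Proposition~\ref{seq}: at each stage $\mu_{n,j}=\mu_{n-1,j}\,z^{N(n)-N(n+1)}+O(z^{N(n)-N(n+1)+1})$ in the local parameter $z$ of the outermost point, and since the leading exponent is independent of $j$, the singular factor cancels in the ratio $\mu_{n,g+1-i}/\mu_{n,k+1}$, which thus converges to $\mu_{n-1,g+1-i}/\mu_{n-1,k+1}$. Iterating $g-k$ times yields $\mu_{k,g+1-i}(P_1,\dots,P_k)/\mu_{k,k+1}(P_1,\dots,P_k)=\mu_{k,g+1-i}(P_1,\dots,P_k)$, since $\mu_{k,k+1}\equiv 1$ by definition.

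For the left-hand side I invoke Riemann's singularity theorem (Theorem~\ref{riemann}) together with Proposition~\ref{dim}: under $g-a\le k\le g-1$ one checks $n_k=\#\{n\mid 0\le N(n)\le g-k-1\}=1$, so $\sigma(u^{[k]})=0$ but not all first partials of $\sigma$ at $u^{[k]}$ vanish. Rewriting $\wp$ via (\ref{178}) gives
\[
\frac{\wp_{1,i}(u)}{\wp_{1,g-k}(u)}=\frac{\sigma_1(u)\sigma_i(u)-\sigma_{1,i}(u)\sigma(u)}{\sigma_1(u)\sigma_{g-k}(u)-\sigma_{1,g-k}(u)\sigma(u)},
\]
whose specialization at $u=u^{[k]}$ equals $\sigma_i(u^{[k]})/\sigma_{g-k}(u^{[k]})$ on the locus where $\sigma_1(u^{[k]})\sigma_{g-k}(u^{[k]})\ne 0$. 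Equating the two limits establishes the identity on that locus, hence on all of $X^k$ by meromorphicity. The non-identical vanishing of $\sigma_{g-k}(u^{[k]})$ then falls out of the identity itself: were it identically zero, every $\sigma_i(u^{[k]})$ would vanish, contradicting $n_k=1$. The main obstacle is verifying that $\sigma_1(u^{[k]})\not\equiv 0$ on $X^k$, which is needed to rule out a $0/0$ indeterminacy in the LHS limit; I would extract this from Riemann's theorem combined with Proposition~\ref{c}, since $w_1=1$ makes the component $(u^{[g]}-u^{[k]})_1\sim\sum_l z_l$ the dominant direction of approach, so that the leading behavior of $\sigma(u^{[g]})$ must be controlled by $\sigma_1(u^{[k]})$ on a nonempty open subset of $X^k$.
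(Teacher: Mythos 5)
Your reduction of the right-hand side via Proposition~\ref{seq} is essentially sound for $i\ge g-k$ (and $\mu_{k,k+1}\equiv 1$ is correctly used), but the left-hand side of your argument has a genuine gap that cannot be patched along the lines you suggest. Evaluating the limit of $\wp_{1,i}\big(u^{[g]}\big)/\wp_{1,g-k}\big(u^{[g]}\big)$ through~(\ref{178}) as $\tfrac{\sigma_1\sigma_i-\sigma_{1,i}\sigma}{\sigma_1\sigma_{g-k}-\sigma_{1,g-k}\sigma}$ at $u=u^{[k]}$ only gives $\sigma_i\big(u^{[k]}\big)/\sigma_{g-k}\big(u^{[k]}\big)$ on the locus $\sigma_1\big(u^{[k]}\big)\sigma_{g-k}\big(u^{[k]}\big)\neq0$, and for $k\le g-2$ that locus is \emph{empty}: the theorem itself (case $i=1<g-k$, where $\mu_{k,g}\equiv0$) forces $\sigma_1\big(u^{[k]}\big)\equiv0$, and this is confirmed by the paper's $(4,6,5)$ example ($\sigma_1\big(u^{[2]}\big)\equiv0$, $\sigma_2\big(u^{[1]}\big)\equiv0$, cf.\ Corollary~\ref{55} and Section~\ref{section10}). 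So your final step --- extracting $\sigma_1\big(u^{[k]}\big)\not\equiv0$ from Riemann's theorem and the heuristic that the $u_1$-direction dominates --- is asking for something false; Riemann's singularity theorem with $n_k=1$ only gives \emph{some} nonvanishing first derivative, and the one that survives on the stratum is $\sigma_{g-k}$, not $\sigma_1$. Consequently the limit you need is a genuine $0/0$ indeterminacy in which both numerator and denominator vanish to positive order in each $z_l$, and resolving it requires precisely the order-of-vanishing bookkeeping you have skipped. (A smaller issue: for $i<g-k$ your iteration of Proposition~\ref{seq} on the numerator breaks down at the stage where $g+1-i=n+1$, since $\mu_{n,n+1}\equiv1$ is not covered by that proposition.)

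The paper resolves exactly this point differently: it first settles $k=g-1$ using the expansion $\sigma\big(u^{[g-1]}+\int_\infty^{P_g}du\big)=\sigma_1\big(u^{[g-1]}\big)z_g+O\big(z_g^2\big)$ with $\sigma_1\big(u^{[g-1]}\big)\not\equiv0$ (a cited result of [Aya2, Theorem~2], which your sketch would also need even for this base case), and then descends by induction on $k$: the already-proved identity for index $g-k+1$ gives $\sigma_{g-k+1}\big(u^{[k]}\big)/\sigma_{g-k}\big(u^{[k]}\big)=-\mu_{k,k}=-z_k^{N(k)-N(k+1)}(1+O(z_k))$, which supplies the correct normalizing factor to pass from stratum $k$ to stratum $k-1$, while Riemann's singularity theorem plus Proposition~\ref{dim} (here is where $g-a\le k$ enters) guarantees some $\sigma_{i_0}\big(u^{[k-1]}\big)\not\equiv0$ and hence that the new denominator $\sigma_{g-k+1}\big(u^{[k-1]}\big)$ is not identically zero before the limit $P_k\to\infty$ is taken. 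Your proposal would need to be rebuilt around this kind of stratum-by-stratum normalization; as written, the passage from $\wp$-ratios at $u^{[g]}$ to $\sigma$-ratios at $u^{[k]}$ does not go through for any $k\le g-2$.
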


\begin{proof}
First we prove for $k=g-1$. Let $z_g$ be the local parameter of $P_g$ around $\infty$ satis\-fying~(\ref{eq-3-1}). From~\cite[Theorem~2]{Aya2}, $\sigma_1\big(u^{[g-1]}\big)$ does not vanish identically with respect to $P_1,\dots$, $P_{g-1}$ and we have the expansion
\begin{gather*}
\sigma\left(u^{[g-1]}+\int_{\infty}^{P_g}du\right) =\sigma_1\big(u^{[g-1]}\big)z_g+O\big(z_g^2\big).
\end{gather*}
From Theorem \ref{main2} and (\ref{178}), we have
\begin{gather*}
\frac{\sigma_i\left(u^{[g-1]}+ {\int_{\infty}^{P_g}du}\right)\sigma_1\left(u^{[g-1]}+ {\int_{\infty}^{P_g}du}\right)-\sigma_{1,i}\left(u^{[g-1]}+ {\int_{\infty}^{P_g}du}\right)\sigma\left(u^{[g-1]}+ {\int_{\infty}^{P_g}du}\right)}
{\sigma\left(u^{[g-1]}+{\int_{\infty}^{P_g}du}\right)^2} \\
\qquad {} =(-1)^{i-1}\mu_{g,g+1-i}(P_1,\dots,P_g),
\end{gather*}
as a meromorphic function of $P_1,\dots,P_g$. Therefore, from Proposition~\ref{seq}, we have
\begin{gather*}
\frac{\big\{\sigma_i\big(u^{[g-1]}\big)+O(z_g)\big\}\cdot\big\{\sigma_1\big(u^{[g-1]}\big)+O(z_g)\big\}+O(z_g)}{\sigma_1\big(u^{[g-1]}\big)^2z_g^2+O(z_g^3)} \\
\qquad{} =(-1)^{i-1}z_g^{-2}\mu_{g-1,g+1-i}(P_1,\dots,P_{g-1})+O\big(z_g^{-1}\big).
\end{gather*}
By comparing the coef\/f\/icient of $z_g^{-2}$ of the above equation, we f\/ind that $\sigma_i\big(u^{[g-1]}\big)$ does not vanish identically with respect to $P_1,\dots,P_{g-1}$ and we have, as a meromorphic function of $P_1,\dots,P_{g-1}$,
\begin{gather*}
\frac{\sigma_i\big(u^{[g-1]}\big)}{\sigma_1\big(u^{[g-1]}\big)}=(-1)^{i-1}\mu_{g-1,g+1-i}(P_1,\dots,P_{g-1}). %\label{123}
\end{gather*}

Next we prove Theorem \ref{main3} for $g-a\le k\le g-2$ by induction of $k$ as in the case of~\cite{Matsutani}. Assume that Theorem~\ref{main3} holds for~$k$ satisfying $g-a+1\le k\le g-1$. Then, for $i\ge g-k$, $\sigma_i\big(u^{[k]}\big)$ does not vanish identically with respect to $P_1,\dots,P_k$. From the assumption of induction, we have, as a meromorphic function of $P_1,\dots,P_k$,
\begin{gather*}
\frac{\sigma_{g-k+1}\big(u^{[k]}\big)}{\sigma_{g-k}\big(u^{[k]}\big)}=-\mu_{k,k}(P_1,\dots,P_k).
\end{gather*}
From Proposition \ref{seq}, we have
\begin{gather*}
\mu_{k,k}(P_1,\dots,P_k)=z_k^{N(k)-N(k+1)}+O\big(z_k^{N(k)-N(k+1)+1}\big).
\end{gather*}
By the assumption of induction we have, as a meromorphic function of $P_1,\dots,P_k$,
\begin{gather}
\frac{\sigma_i\big(u^{[k]}\big)}{\sigma_{g-k}\big(u^{[k]}\big)}=(-1)^{k+i-g}\mu_{k,g+1-i}(P_1,\dots,P_k).\label{ind}
\end{gather}
By multiplying the both sides of~(\ref{ind}) by $\sigma_{g-k}\big(u^{[k]}\big)/\sigma_{g-k+1}\big(u^{[k]}\big)$, we have, as a meromorphic function of $P_1,\dots,P_k$,
\begin{gather*}
\frac{\sigma_{g-k}\big(u^{[k]}\big)}{\sigma_{g-k+1}\big(u^{[k]}\big)}\cdot\frac{\sigma_i\big(u^{[k]}\big)}{\sigma_{g-k}\big(u^{[k]}\big)}
=(-1)^{k+i-g}\frac{\sigma_{g-k}\big(u^{[k]}\big)}{\sigma_{g-k+1}\big(u^{[k]}\big)}\cdot\mu_{k,g+1-i}(P_1,\dots,P_k).
\end{gather*}
Therefore, from Proposition \ref{seq}, we have
\begin{gather}
\frac{\sigma_i\big(u^{[k]}\big)}{\sigma_{g-k+1}\big(u^{[k]}\big)}
=(-1)^{k-1+i-g}\big\{z_k^{N(k+1)-N(k)}+O\big(z_k^{N(k+1)-N(k)+1}\big)\big\}\cdot\mu_{k,g+1-i}(P_1,\dots,P_k)\nonumber \\
\hphantom{\frac{\sigma_i\big(u^{[k]}\big)}{\sigma_{g-k+1}\big(u^{[k]}\big)}}{} =(-1)^{k-1+i-g}\mu_{k-1,g+1-i}(P_1,\dots,P_{k-1})+O(z_k).\label{6}
\end{gather}
Since $\psi_{k-1}^{(k)}(P_1,\dots,P_{k-1})$ does not vanish identically with respect to $P_1,\dots,P_{k-1}$, there exist $\tilde{P}_1,\dots,\tilde{P}_{k-1}\in X\backslash\infty$ such that $\psi_{k-1}^{(k)}(\tilde{P}_1,\dots,\tilde{P}_{k-1})\neq0$. Let $\tilde{u}^{[k-1]}=\sum\limits_{i=1}^{k-1}\int_{\infty}^{\tilde{P}_i}du$. From $g-a<k$, we have $g-k<a$. Therefore, from Theorem~\ref{riemann} and Proposition~\ref{dim}, there exists~$i_0$ such that $\sigma_{i_0}(\tilde{u}^{[k-1]})\neq0$. Therefore $\sigma_{i_0}\big(u^{[k-1]}\big)$ does not vanish identically with respect to $P_1,\dots,P_{k-1}$. Since the equation (\ref{6}) holds for $i=i_0$, we f\/ind that $\sigma_{g-k+1}\big(u^{[k-1]}\big)$ does not vanish identically with respect to $P_1,\dots,P_{k-1}$. Take the limit $P_k\to\infty$ in (\ref{6}), then we have
\begin{gather*}
\frac{\sigma_i\big(u^{[k-1]}\big)}{\sigma_{g-k+1}\big(u^{[k-1]}\big)}=(-1)^{k-1+i-g}\mu_{k-1,g+1-i}(P_1,\dots,P_{k-1}).
\end{gather*}
Therefore Theorem \ref{main3} holds for $k-1$.
\end{proof}

\begin{Corollary}\label{999}
If $g=a-1,a,a+1$, then we have
\begin{gather}
\frac{\sigma_g\big(u^{[1]}\big)}{\sigma_{g-1}\big(u^{[1]}\big)}=-x_{i_0}(P_1),\label{xcoordinate}
\end{gather}
where $i_0$ is determined by $a_{i_0}=\arg\min\{a_1,\dots,a_m\}$.
\end{Corollary}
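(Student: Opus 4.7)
The strategy is to obtain Corollary \ref{999} as a direct specialization of Theorem~\ref{main3} to $k=1$ and $i=g$, followed by an explicit identification of the Frobenius--Stickelberger quotient $\mu_{1,1}(P_1)$ with the coordinate $x_{i_0}(P_1)$.

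First I would check that the hypothesis $g-a\le k\le g-1$ of Theorem~\ref{main3} is satisfied at $k=1$ in each of the three cases $g=a-1,a,a+1$: indeed $g-a\in\{-1,0,1\}\le 1$, and $1\le g-1$ amounts to $g\ge 2$ (which holds, since $a\ge 2$ by definition of a telescopic curve, with the edge case $g=a-1$ requiring $a\ge 3$). Substituting $k=1$ and $i=g$ into the formula of Theorem~\ref{main3} yields
\begin{gather*}
\frac{\sigma_g\big(u^{[1]}\big)}{\sigma_{g-1}\big(u^{[1]}\big)}=(-1)^{1+g-g}\mu_{1,1}(P_1)=-\mu_{1,1}(P_1),
\end{gather*}
so the task reduces to computing $\mu_{1,1}(P_1)$ and identifying it with $x_{i_0}(P_1)$.

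Unpacking the definitions of Section~\ref{section5} with $k=1$, the Frobenius--Stickelberger matrix is $2\times2$; one reads off $\psi_1^{(1)}(P_1)=\varphi_2(P_1)$ and $\psi_1^{(2)}(P_1)=\varphi_1(P_1)=1$, whence $\mu_{1,1}(P_1)=\varphi_2(P_1)$. To identify $\varphi_2$, I would observe that $\operatorname{ord}_{\infty}(x_i)=a_i$ and any nontrivial monomial $x_1^{\alpha_1}\cdots x_m^{\alpha_m}$ has pole order at least $\min_i a_i=a_{i_0}$, so the smallest positive pole order among the $\varphi_n$ is exactly $a_{i_0}$. By Lemma~\ref{lem-2-1} the element of $B(A_m)$ representing $a_{i_0}$ is unique, and the candidate $e_{i_0}$ (with a $1$ in position $i_0$ and $0$ elsewhere) does lie in $B(A_m)$: when $i_0\ge 2$ one needs $d_{i_0-1}/d_{i_0}\ge 2$, and this follows from the uniqueness of $i_0=\arg\min\{a_1,\dots,a_m\}$ combined with the telescopic condition, for otherwise $a_{i_0}/d_{i_0-1}$ would be a nonzero non-negative integer combination of $a_1/d_{i_0-1},\dots,a_{i_0-1}/d_{i_0-1}$, forcing some $a_{j_0}=a_{i_0}$ with $j_0<i_0$ and contradicting uniqueness of the $\arg\min$. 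Hence $\varphi_2=x_{i_0}$, and assembling the pieces produces the stated formula~(\ref{xcoordinate}).

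There is no genuine analytic obstacle beyond what is already accomplished inside Theorem~\ref{main3}; the only delicate piece of bookkeeping is the combinatorial verification that $e_{i_0}\in B(A_m)$, which is what upgrades the abstract identity $\mu_{1,1}(P_1)=\varphi_2(P_1)$ into the geometric statement $\mu_{1,1}(P_1)=x_{i_0}(P_1)$.
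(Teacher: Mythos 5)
Your proposal is correct and follows essentially the same route as the paper, whose proof is precisely the one-line observation that the hypothesis $g=a-1,a,a+1$ puts $k=1$ within the range $g-a\le k\le g-1$ of Theorem~\ref{main3}. Your additional bookkeeping --- computing $\mu_{1,1}(P_1)=\varphi_2(P_1)$ from the $2\times2$ Frobenius--Stickelberger matrix and verifying via the telescopic condition that $e_{i_0}\in B(A_m)$ so that $\varphi_2=x_{i_0}$ --- is exactly the identification the paper leaves implicit, done carefully.
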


\begin{proof}
If $g=a-1,a,a+1$, then Theorem \ref{main3} holds for $k=1$.
\end{proof}

\begin{Remark}
Corollary \ref{999} asserts that the $x_{i_0}$ coordinate of $P_1$ is expressed by the sigma function. For example, Corollary~\ref{999} holds for $(4,6,5)$ curves.
\end{Remark}

\begin{Remark}
For $(2,5)$ and $(2,7)$ curves, it is known that $x_2$ coordinate of~$P_1$ can be expressed explicitly by the sigma function (see \cite[Lemma~3.2.4]{O2} and \cite[p.~221]{BEL3}). For example, for $(2,5)$ curves, it is known that
\begin{gather*}
x_2(P_1)=\frac{1}{2}\cdot\frac{\sigma\big(2u^{[1]}\big)}{\sigma_1\big(u^{[1]}\big)^4}.
\end{gather*}
On the other hand, for $(2,5)$ and $(2,7)$ curves, it is known that the expression of $x_2$ coordinate of $P_1$ can also be derived by dif\/ferentiating the both sides of (\ref{xcoordinate}) (see \cite[p.~221]{BEL3} and \cite[Remark~5.4]{Matsutani}). For example, for $(2,5)$ curves, it is known that
\begin{gather*}
x_2(P_1)=\frac{1}{2}\cdot\frac{\sigma_{11}\big(u^{[1]}\big)x_1(P_1)^2
+2\sigma_{12}\big(u^{[1]}\big)x_1(P_1)+\sigma_{22}\big(u^{[1]}\big)}{\sigma_1\big(u^{[1]}\big)}.
\end{gather*}

Although the similar expressions for the other coordinates of telescopic curves are not obtained currently, we will consider a generalization of these results to telescopic curves in a~subsequent work.
\end{Remark}

\begin{Remark}
Theorem \ref{main2} holds for $\widehat{\omega}(P,Q)$ satisfying (\ref{spe}). On the other hand, Theorem \ref{main3} holds for any choice of $\widehat{\omega}(P,Q)$.
\end{Remark}

\begin{Remark}
As mentioned in \cite{Matsutani2}, Theorem \ref{main3} for $k=g-1$ can also be proved by \cite[Theorem~1]{N1} and \cite[Theorem~1]{J}.
\end{Remark}

\begin{Remark} In this paper, we consider the Jacobi inversion formulae for the telescopic curves, which the Young diagrams are symmetric, i.e., the vector of Riemann constants for a~base point~$\infty$ is a half-period. On the other hand, in~\cite{Matsutani4, Matsutani3}, the Jacobi inversion formulae are derived for $(3,4,5)$ curves and $(3,7,8)$ curves, which the Young diagrams are not symmetric, i.e., the vector of Riemann constants for a base point $\infty$ is not a half-period.
\end{Remark}

\section[Example: $(4,6,5)$-curve]{Example: $\boldsymbol{(4,6,5)}$-curve}\label{section8}

In this section we give an explicit example of the Jacobi inversion formulae in the case of a~$(4,6,5)$-curve $X$. The genus of $X$ is 4 and $\varphi_1=1$, $\varphi_2=x_1$, $\varphi_3=x_3$, $\varphi_4=x_2$, $\varphi_5=x_1^2$. Therefore the Jacobi inversion formulae are as follows.

For $k=4$, $i=1$, we have
\begin{gather*}
\wp_{1,1}\big(u^{[4]}\big)=\frac{
\left|\begin{matrix}1 & x_1(P_1) & x_3(P_1) & x_1^2(P_1) \\ 1 & x_1(P_2) & x_3(P_2) & x_1^2(P_2) \\
1 & x_1(P_3) & x_3(P_3) & x_1^2(P_3) \\ 1 & x_1(P_4) & x_3(P_4) & x_1^2(P_4)
\end{matrix}\right|}
{\left|\begin{matrix}1 & x_1(P_1) & x_3(P_1) & x_2(P_1) \\ 1 & x_1(P_2) & x_3(P_2) & x_2(P_2) \\
1 & x_1(P_3) & x_3(P_3) & x_2(P_3) \\ 1 & x_1(P_4) & x_3(P_4) & x_2(P_4)
\end{matrix}\right|}.
\end{gather*}

For $k=3$, $i=2$, we have
\begin{gather*}
\frac{\sigma_2\big(u^{[3]}\big)}{\sigma_1\big(u^{[3]}\big)}=-\frac{
\left|\begin{matrix}1 & x_1(P_1) & x_2(P_1) \\ 1 & x_1(P_2) & x_2(P_2) \\
1 & x_1(P_3) & x_2(P_3)
\end{matrix}\right|}
{\left|\begin{matrix}1 & x_1(P_1) & x_3(P_1) \\ 1 & x_1(P_2) & x_3(P_2) \\
1 & x_1(P_3) & x_3(P_3)
\end{matrix}\right|}.
\end{gather*}

For $k=2$, we have
\begin{gather*}
\frac{\sigma_3\big(u^{[2]}\big)}{\sigma_2\big(u^{[2]}\big)}=\frac{x_3(P_1)-x_3(P_2)}{x_1(P_2)-x_1(P_1)},\qquad \frac{\sigma_4\big(u^{[2]}\big)}{\sigma_2\big(u^{[2]}\big)}=\frac{x_1(P_1)x_3(P_2)-x_1(P_2)x_3(P_1)}{x_1(P_2)-x_1(P_1)}.
\end{gather*}

For $k=1$, we have
\begin{gather*}
\frac{\sigma_4\big(u^{[1]}\big)}{\sigma_3\big(u^{[1]}\big)}=-x_1(P_1).
\end{gather*}

\section[Vanishing of $\sigma_i$]{Vanishing of $\boldsymbol{\sigma_i}$}\label{section9}

In \cite{Aya2, NY}, the vanishing and the expansion of the sigma functions of $(n,s)$ curves and telescopic curves on the Abel--Jacobi image are studied.
In this section, we show that from Theorem~\ref{main3} we can derive some new vanishing properties of~$\sigma_i$ for telescopic curves immediately.

\begin{Corollary}\label{9}
If $g-a\le k\le g-1$ and $i\ge g-k$, then $\sigma_i\big(u^{[k]}\big)$ does not vanish identically with respect to $P_1,\dots,P_k$.
\end{Corollary}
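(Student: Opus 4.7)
The plan is to obtain this corollary as a direct consequence of Theorem \ref{main3}, with essentially no additional machinery needed. Theorem \ref{main3} already asserts two things: first, that $\sigma_{g-k}(u^{[k]})$ does not vanish identically with respect to $P_1,\dots,P_k$; and second, that the identity
\begin{gather*}
\sigma_i\big(u^{[k]}\big) = (-1)^{k+i-g}\,\mu_{k,g+1-i}(P_1,\dots,P_k)\,\sigma_{g-k}\big(u^{[k]}\big)
\end{gather*}
holds as meromorphic functions of $P_1,\dots,P_k$ for all $1 \leq i \leq g$. The case $i = g-k$ is then immediate from the first assertion (noting $\mu_{k,k+1}\equiv 1$ by definition), and for $i > g-k$ it suffices to show that the prefactor $\mu_{k,g+1-i}$ is not identically zero.

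For $i \geq g-k+1$, the index satisfies $1 \leq g+1-i \leq k$, so by the definition in Section \ref{section5}
\begin{gather*}
\mu_{k,g+1-i}(P_1,\dots,P_k) = \frac{\psi_k^{(g+1-i)}(P_1,\dots,P_k)}{\psi_k^{(k+1)}(P_1,\dots,P_k)}
\end{gather*}
is a genuine ratio of determinants of Frobenius--Stickelberger submatrices. The text of Section~\ref{section5} already records that $\psi_k^{(j)}(P_1,\dots,P_k)$ does not vanish identically as a meromorphic function of $P_1,\dots,P_k$ for any $1 \leq j \leq k+1$; hence both numerator and denominator here are non-identically-zero meromorphic functions, so the quotient $\mu_{k,g+1-i}$ is itself non-identically-zero. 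Multiplying by the non-identically-zero function $\sigma_{g-k}(u^{[k]})$ then gives the desired conclusion.

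There is no substantive obstacle in this argument; the only point requiring care is bookkeeping on the index range. The hypothesis $i \geq g-k$ corresponds exactly to $g+1-i \leq k+1$, which is precisely the range in which $\mu_{k,g+1-i}$ is a non-trivial ratio rather than being zero by convention (recall $\mu_{k,j} = 0$ for $j \geq k+2$, which would make the argument fail for $i < g-k$). Thus the corollary is a clean repackaging of Theorem~\ref{main3} combined with the non-vanishing of the Frobenius--Stickelberger determinants, with no further computation required.
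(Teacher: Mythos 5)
Your proposal is correct and follows essentially the same route as the paper: the paper's proof simply observes that for $i\ge g-k$ the function $\mu_{k,g+1-i}(P_1,\dots,P_k)$ is not identically zero and then invokes Theorem~\ref{main3}. Your write-up only adds the (correct) bookkeeping that $1\le g+1-i\le k+1$, that $\mu_{k,k+1}\equiv 1$, and that $\mu_{k,g+1-i}$ is a ratio of the non-identically-vanishing determinants $\psi_k^{(g+1-i)}$ and $\psi_k^{(k+1)}$, which is exactly the implicit content of the paper's one-line argument.
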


\begin{proof}
For $i\ge g-k$, $\mu_{k,g+1-i}(P_1,\dots,P_{k})$ does not vanish identically with respect to $P_1,\dots,P_k$.
Therefore Corollary~\ref{9} follows from Theorem~\ref{main3}.
\end{proof}

For $g-a\le k\le g-1$ and $i>g-k$, we consider the expansion
\begin{gather*}
\sigma_{g-k}\left(u^{[k-1]}+\int_{\infty}^{P_k}du\right)=C_k\big(u^{[k-1]}\big)z_k^{\alpha_k}+O\big(z_k^{\alpha_k+1}\big)
\end{gather*}
and
\begin{gather*}
\sigma_i\left(u^{[k-1]}+\int_{\infty}^{P_k}du\right)=C_{k,i}\big(u^{[k-1]}\big)z_k^{\beta_{k,i}}+O\big(z_k^{\beta_{k,i}+1}\big),
\end{gather*}
where $C_k\big(u^{[k-1]}\big)$ and $C_{k,i}\big(u^{[k-1]}\big)$ do not vanish identically with respect to $P_1,\dots,P_{k-1}$.

\begin{Corollary}\label{55}\quad
\begin{enumerate}\itemsep=0pt
\item[$(i)$] We have $\alpha_k=\beta_{k,i}+N(k+1)-N(k)$. In particular, if $g-a< k\le g-1$ and $i>g-k$, then we have $\beta_{k,i}=0$ and $\alpha_k=N(k+1)-N(k)$.

\item[$(ii)$] We have, as a meromorphic function of $P_1,\dots,P_{k-1}$,
\begin{gather}
\frac{C_{k,i}\big(u^{[k-1]}\big)}{C_k\big(u^{[k-1]}\big)}=(-1)^{k+i-g}\mu_{k-1,g+1-i}(P_1,\dots,P_{k-1}).\label{8}
\end{gather}
\end{enumerate}
\end{Corollary}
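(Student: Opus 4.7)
The plan is to insert the given $z_k$-expansions of $\sigma_{g-k}(u^{[k]})$ and $\sigma_i(u^{[k]})$ into the identity of Theorem~\ref{main3} at level~$k$, form the ratio, and match term-by-term against the $z_k$-expansion of $\mu_{k,g+1-i}(P_1,\dots,P_k)$ provided by Proposition~\ref{seq}. Both sides become formal Laurent series in $z_k$ whose coefficients are meromorphic functions of $P_1,\dots,P_{k-1}$, so a comparison of leading exponents and leading coefficients will produce (i) and (ii) simultaneously.

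Concretely, I would first write, using the hypothesis that $C_k(u^{[k-1]})$ is not identically zero,
$$\frac{\sigma_i(u^{[k]})}{\sigma_{g-k}(u^{[k]})}=\frac{C_{k,i}(u^{[k-1]})}{C_k(u^{[k-1]})}\,z_k^{\beta_{k,i}-\alpha_k}\bigl(1+O(z_k)\bigr).$$
Theorem~\ref{main3} (which applies in the range $g-a\le k\le g-1$) identifies this ratio with $(-1)^{k+i-g}\mu_{k,g+1-i}(P_1,\dots,P_k)$, and Proposition~\ref{seq} expands the latter as
$$(-1)^{k+i-g}\mu_{k-1,g+1-i}(P_1,\dots,P_{k-1})\,z_k^{N(k)-N(k+1)}+O\!\left(z_k^{N(k)-N(k+1)+1}\right).$$
Comparing the exponents of the leading $z_k$-power yields $\beta_{k,i}-\alpha_k=N(k)-N(k+1)$, i.e.\ the relation $\alpha_k=\beta_{k,i}+N(k+1)-N(k)$ asserted in the first half of (i); comparing the leading coefficients yields the formula~(\ref{8}) in (ii).

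For the refinement in (i), assume further $g-a<k\le g-1$ and $i>g-k$. Then $g-a\le k-1\le g-2$, so Corollary~\ref{9} (equivalently, Theorem~\ref{main3} at level $k-1$) guarantees that $\sigma_i(u^{[k-1]})$ does not vanish identically with respect to $P_1,\dots,P_{k-1}$ whenever $i\ge g-(k-1)=g-k+1$, which is precisely the hypothesis $i>g-k$. Consequently the value $\sigma_i(u^{[k]})|_{z_k=0}=\sigma_i(u^{[k-1]})$ is not identically zero, forcing $\beta_{k,i}=0$; the formula $\alpha_k=N(k+1)-N(k)$ then follows from the general relation already established. The only real subtlety is the bookkeeping needed to confirm that the Laurent expansions can be divided to yield a meromorphic series, which is exactly why the non-vanishing of $C_k$ (part of the setup) and of $C_{k,i}$ (ensured by Corollary~\ref{9} in the stated range) must be invoked before comparing leading terms.
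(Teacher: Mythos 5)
Your proposal is correct and follows essentially the same route as the paper: substitute the $z_k$-expansions into the identity of Theorem~\ref{main3} at level $k$, use Proposition~\ref{seq} to expand $\mu_{k,g+1-i}$, and compare leading exponents and coefficients to get both the exponent relation and~(\ref{8}), then invoke Corollary~\ref{9} at level $k-1$ to force $\beta_{k,i}=0$ in the refined range. Your extra remarks (e.g.\ that $\sigma_i\big(u^{[k]}\big)\big|_{z_k=0}=\sigma_i\big(u^{[k-1]}\big)$ and that the leading coefficients are nonvanishing by the setup) only make explicit what the paper leaves implicit.
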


\begin{proof}
From Theorem \ref{main3}, we have
\begin{gather*}
\frac{\sigma_i\big(u^{[k]}\big)}{\sigma_{g-k}\big(u^{[k]}\big)}=(-1)^{k+i-g}\mu_{k,g+1-i}(P_1,\dots,P_{k}).
\end{gather*}
Therefore we have
\begin{gather*}
\frac{C_{k,i}\big(u^{[k-1]}\big)z_k^{\beta_{k,i}}+O\big(z_k^{\beta_{k,i}+1}\big)}{C_k\big(u^{[k-1]}\big)z_k^{\alpha_k}+O\big(z_k^{\alpha_k+1}\big)} \\
\qquad{} =(-1)^{k+i-g}\mu_{k-1,g+1-i}(P_1,\dots,P_{k-1})z_k^{N(k)-N(k+1)}+O\big(z_k^{N(k)-N(k+1)+1}\big).
\end{gather*}
Therefore we obtain $\beta_{k,i}-\alpha_k=N(k)-N(k+1)$ and~(\ref{8}). On the other hand, if $g-a< k\le g-1$ and $i>g-k$, then from Corollary~\ref{9} $\sigma_i\big(u^{[k-1]}\big)$ does not vanish identically with respect to $P_1,\dots,P_{k-1}$. Therefore, if $g-a< k\le g-1$ and $i>g-k$, then $\beta_{k,i}=0$.
\end{proof}

\section[Example: $(4,6,5)$-curve]{Example: $\boldsymbol{(4,6,5)}$-curve}\label{section10}

By applying Corollary \ref{9} for the $(4,6,5)$ curves, we have $\sigma_3\big(u^{[1]}\big)\not\equiv0$, $\sigma_4\big(u^{[1]}\big)\not\equiv0$, $\sigma_2\big(u^{[2]}\big)\not\equiv0$, $\sigma_3\big(u^{[2]}\big)\not\equiv0$, $\sigma_4\big(u^{[2]}\big)\not\equiv0$, $\sigma_1\big(u^{[3]}\big)\not\equiv0$, $\sigma_2\big(u^{[3]}\big)\not\equiv0$, $\sigma_3\big(u^{[3]}\big)\not\equiv0$, $\sigma_4\big(u^{[3]}\big)\not\equiv0$.

By applying Corollary \ref{55} for the $(4,6,5)$ curves, we have
\begin{gather*}
\sigma_1\big(u^{[3]}\big)=C_3\big(u^{[2]}\big)z_3+O\big(z_3^2\big), \qquad
\sigma_2\big(u^{[2]}\big)=C_2\big(u^{[1]}\big)z_2+O\big(z_2^2\big), \\
\sigma_3\big(u^{[1]}\big)=C_1z_1^4+O\big(z_1^5\big),
\end{gather*}
where $C_3\big(u^{[2]}\big)\not\equiv0$, $C_2\big(u^{[1]}\big)\not\equiv0$, and $C_1\neq0$.

\appendix

\section{Proof of Lemma \ref{lemma5}}

From (\ref{eq-2-5}), for $2\le i\le m$, we have
\begin{gather*}
\frac{\partial F_i}{\partial y_n}= \begin{cases}-\ell_{i,n}y_1^{\ell_{i,1}}\cdots y_n^{\ell_{i,n}-1}\cdots y_{i-1}^{\ell_{i,i-1}}-\sum j_n\lambda_{j_1,\dots,j_m}^{(i)}y_1^{j_1}\cdots y_n^{j_n-1}\cdots y_m^{j_m}, & 1\le n\le i-1, \\
(d_{i-1}/d_i)y_i^{d_{i-1}/d_i-1}-\sum j_i\lambda_{j_1,\dots,j_m}^{(i)}y_1^{j_1}\cdots y_i^{j_i-1}\cdots y_m^{j_m}, & n=i, \\
-\sum j_n\lambda_{j_1,\dots,j_m}^{(i)}y_1^{j_1}\cdots y_n^{j_n-1}\cdots y_m^{j_m} ,& i+1\le n\le m.
\end{cases}
\end{gather*}
Let $\epsilon_k$ be the coef\/f\/icient of $y_1^{\gamma_1}\cdots y_m^{\gamma_m}$ in $\det G_k(Q)$. Since $\det G_k(Q)$ is homogeneous of degree $\sum\limits_{i=2}^ma_id_{i-1}/d_i-\sum\limits_{i=1}^ma_i+a_k$ and $\sum\limits_{i=1}^ma_i\gamma_i=\sum\limits_{i=2}^ma_id_{i-1}/d_i-\sum\limits_{i=1}^ma_i+a_k$, $\epsilon_k$ does not contain $\big\{\lambda_{j_1,\dots,j_m}^{(i)}\big\}$. Therefore $\epsilon_k$ is the determinant of the $(m-1)\times (m-1)$ matrix obtained by deleting the $k$-th column from the $(m-1)\times m$ matrix~$M$
\begin{gather*}
M:=\left(\begin{matrix}
-\ell_{2,1} & d_1/d_2 & 0 & \cdots & 0 \\
-\ell_{3,1} & -\ell_{3,2} & d_2/d_3 & \cdots & 0 \\
\vdots & \vdots & \vdots & \ddots & \vdots \\
-\ell_{m,1} & -\ell_{m,2} & \cdots & -\ell_{m, m-1} & d_{m-1}/d_m
\end{matrix}\right).
\end{gather*}
By multiplying some elementary matrices on the left, the matrix $M$ becomes
\begin{gather*}
\widetilde{M}=\left(\begin{matrix}
z_2 & d_1/d_2 & 0 & \cdots & 0\\
z_3 & 0 & d_2/d_3 & \cdots & 0\\
\vdots & \vdots & \vdots & \ddots & \vdots \\
z_m & 0 & 0 & \cdots & d_{m-1}/d_m
\end{matrix}\right)
\end{gather*}
for certain $z_2,\dots,z_m\in\mathbb{C}$. For $k=1$, we have
\begin{gather*}
\epsilon_1=\frac{d_1}{d_2}\cdot\frac{d_2}{d_3}\cdots\frac{d_{m-1}}{d_m}=\frac{d_1}{d_m}=a_1.
\end{gather*}
For $k\ge2$, we have
\begin{gather*}
\epsilon_k=(-1)^kz_k\cdot\frac{d_1}{d_2}\cdots\check{\frac{d_{k-1}}{d_k}}\cdots\frac{d_{m-1}}{d_m}=(-1)^kz_k\cdot a_1\frac{d_k}{d_{k-1}},
\end{gather*}
where a check on top of a letter signif\/ies deletion.

Since
\begin{gather*}
M\left(\begin{matrix}
a_1 \\
\vdots \\
a_m
\end{matrix}\right)=
\left(\begin{matrix}
0 \\
\vdots \\
0\end{matrix}\right),
\end{gather*}
we have
\begin{gather*}
\widetilde{M}\left(\begin{matrix}
a_1 \\
\vdots \\
a_m
\end{matrix}\right)=
\left(\begin{matrix}
0 \\
\vdots \\
0\end{matrix}\right).
\end{gather*}
Therefore we have $z_ka_1+(d_{k-1}/d_k)a_k=0$ for $2\le k\le m$. Therefore we have $\epsilon_k=(-1)^{k+1}a_k$.

\subsection*{Acknowledgements}

The author would like to thank Professor Shigeki Matsutani for answering a question on the paper~\cite{Matsutani2} kindly and sending his unpublished paper. The author would like to thank Professor Atsushi Nakayashiki for inviting him the conference ``Curves, Moduli and Integrable Systems'' at Tsuda College and giving valuable discussions. The author would like to thank Professor Masato Okado for the support of travel costs for a presentation at Tsukuba University. The author would like to thank Professor Yoshihiro Onishi for inviting him Meijo University and giving valuable discussions. The author would like to thank the anonymous referees for reading our paper carefully and giving many valuable comments. In particular, the author is deeply grateful for their warm encouragement.

\pdfbookmark[1]{References}{ref}
\LastPageEnding

\end{document}